\definecolor{gruen}{cmyk}{1.0,0.2,0.7,0.07}
\definecolor{greenvariant}{cmyk}{1.0,0.1,0.4,0.1}
\definecolor{mag}{cmyk}{0.0,0.9,0.3,0.0}
\newcounter{komcounter}
\numberwithin{komcounter}{section}
\newcommand{\klockan}{\the\hours:{\ifnum\minutes<10 0\fi}\the\minutes}
\newcommand{\tid}{\today\ \klockan}
\newcommand{\prtid}{\smash{\raise 10mm \hbox{\LaTeX ed \tid}}}
\def\sectionmark#1{} 
\def\subsectionmark#1{}
\newcommand{\sectnr}{\ifnum \c@secnumdepth >\z@
                 \thesection.\hskip 1em\relax \fi}
\def\@evenhead{\footnotesize\rm\thepage\hfil\leftmark\hfil\llap{\prtid}}
\def\@oddhead{\footnotesize\rm\rlap{\prtid}\hfil\rightmark\hfil\thepage}
\def\tableofcontents{\section*{Contents} 
 \@starttoc{toc}}
\def\@biblabel#1{#1.}
\let\Thebibliography=\thebibliography
\renewcommand{\thebibliography}[1]{\def\@mkboth##1##2{}\Thebibliography{#1}
\addcontentsline{toc}{section}{References}
\frenchspacing 
\setlength{\@topsep}{0pt}
\setlength{\itemsep}{0pt}%
\setlength{\parskip}{0pt plus 2pt}%
}
\def\mdots@{\mathinner.\nonscript\!.%
 \ifx\next,.\else\ifx\next;.\else\ifx\next..\else
 \nonscript\!\mathinner.\fi\fi\fi}
\let\ldots\mdots@
\let\cdots\mdots@
\let\dotso\mdots@
\let\dotsb\mdots@
\let\dotsm\mdots@
\let\dotsc\mdots@
\def\vdots{\vbox{\baselineskip2.8\p@ \lineskiplimit\z@
    \kern6\p@\hbox{.}\hbox{.}\hbox{.}\kern3\p@}}
\def\ddots{\mathinner{\mkern1mu\raise8.6\p@\vbox{\kern7\p@\hbox{.}}%
    \raise5.8\p@\hbox{.}\raise3\p@\hbox{.}\mkern1mu}}
\let\Enumerate=\enumerate
\renewcommand{\enumerate}{\Enumerate%
\setlength{\@topsep}{0pt}
\setlength{\itemsep}{0pt}%
\setlength{\parskip}{0pt plus 1pt}%
\renewcommand{\theenumi}{\textup{(\alph{enumi})}}%
\renewcommand{\labelenumi}{\theenumi}%
}
\let\endEnumerate=\endenumerate
\renewcommand{\endenumerate}{\endEnumerate\unskip}
\def\@seccntformat#1{\csname the#1\endcsname.\quad}
\newcommand{\authortitle}[2]{\author{#1}\title{#2}\markboth{#1}{#2}}
\newcommand{\auth}[2]{{#1, #2.}}
\newcommand{\art}[6]{{\sc #1, \rm #2, \it #3 \bf #4 \rm (#5), \mbox{#6}.}}
\newcommand{\book}[3]{{\sc #1, \it #2, \rm #3.}}
\newcommand{\AND}{{\rm and }}
\newtheoremstyle{descriptive}%
  {\topsep}   
  {\topsep}   
  {\rmfamily} 
  {}          
  {\bfseries} 
  {.}         
  { }         
  {}          
\newtheoremstyle{propositional}%
  {\topsep}   
  {\topsep}   
  {\itshape}  
  {}          
  {\bfseries} 
  {.}         
  { }         
  {}          
\newtheoremstyle{remarkstyle}%
  {\topsep}   
  {\topsep}   
  {\rmfamily}  
  {}          
  {\itshape} 
  {.}         
  { }         
  {}          
\theoremstyle{propositional}
\newtheorem{thm}{Theorem}[section]
\newtheorem{prop}[thm]{Proposition}
\newtheorem{lem}[thm]{Lemma}
\theoremstyle{descriptive}
\newtheorem{deff}[thm]{Definition}
\newtheorem{example}[thm]{Example}
\newtheorem{remark}[thm]{Remark}
\renewenvironment{proof}[1][\proofname]{\par
  \pushQED{\qed}%
  \normalfont
  \trivlist
  \item[\hskip\labelsep
        \itshape
    #1\@addpunct{.}]\ignorespaces
}{%
  \popQED\endtrivlist\@endpefalse
}
\def\vint{\mathop{\mathchoice%
          {\setbox0\hbox{$\displaystyle\intop$}\kern 0.22\wd0%
           \vcenter{\hrule width 0.6\wd0}\kern -0.82\wd0}%
          {\setbox0\hbox{$\textstyle\intop$}\kern 0.2\wd0%
           \vcenter{\hrule width 0.6\wd0}\kern -0.8\wd0}%
          {\setbox0\hbox{$\scriptstyle\intop$}\kern 0.2\wd0%
           \vcenter{\hrule width 0.6\wd0}\kern -0.8\wd0}%
          {\setbox0\hbox{$\scriptscriptstyle\intop$}\kern 0.2\wd0%
           \vcenter{\hrule width 0.6\wd0}\kern -0.8\wd0}}%
          \mathopen{}\int}
{\catcode`p =12 \catcode`t =12 \gdef\eeaa#1pt{#1}}      
\def\accentadjtext#1{\setbox0\hbox{$#1$}\kern   
                \expandafter\eeaa\the\fontdimen1\textfont1 \ht0 }
\def\accentadjscript#1{\setbox0\hbox{$#1$}\kern 
                \expandafter\eeaa\the\fontdimen1\scriptfont1 \ht0 }
\def\accentadjscriptscript#1{\setbox0\hbox{$#1$}\kern   
                \expandafter\eeaa\the\fontdimen1\scriptscriptfont1 \ht0 }
\def\accentadjtextback#1{\setbox0\hbox{$#1$}\kern       
                -\expandafter\eeaa\the\fontdimen1\textfont1 \ht0 }
\def\accentadjscriptback#1{\setbox0\hbox{$#1$}\kern     
                -\expandafter\eeaa\the\fontdimen1\scriptfont1 \ht0 }
\def\accentadjscriptscriptback#1{\setbox0\hbox{$#1$}\kern 
                -\expandafter\eeaa\the\fontdimen1\scriptscriptfont1 \ht0 }
\DeclareMathOperator{\diam}{diam}
\DeclareMathOperator{\dist}{dist}
\newcommand{\simge}{\gtrsim}
\newcommand{\simle}{\lesssim}
\newcommand{\eps}{\varepsilon}
\newcommand{\s}{\sigma}
\newcommand{\cqd}{C_{\rm{qd}}}
\renewcommand{\phi}{\varphi}
\numberwithin{equation}{section}
\begin{document}

\authortitle{Korte, Rogovin, Shanmugalingam, Takala}
           {Sharp conditions for sphericalization}
           \title{\Large Sharp conditions for preserving uniformity, doubling measure 
           and Poincar\'e inequality under sphericalization}
           \author{Riikka Korte, Sari Rogovin, Nageswari Shanmugalingam, Timo Takala}


\date{Preliminary version, \today}

\maketitle

\begin{abstract}
We study sphericalization, which is a mapping that conformally deforms the metric and the measure of an unbounded metric measure space so that the deformed space is bounded. The goal of this paper is to study sharp conditions on the deforming density function under which the sphericalization preserves uniformity of the space, the doubling property of the measure and the support of a Poincar\'e inequality. We also provide examples that demonstrate the sharpness of our conditions.
\end{abstract}

\ack{{\small R.K. is partially supported by the Research Council of Finland through project 360184. N.S. is partially supported by the National Science Foundation (US) grant DMS\#2348748. T.T. was supported by the Magnus Ehrnrooth foundation.}}

\medskip

\noindent {\small \emph{Key words and phrases}: sphericalization, uniform space, doubling measure, Poincar\'e inequality, metric measure space.
}

\medskip
\noindent {\small Mathematics Subject Classification (2020):
30L10, 
31E05, 
46E36. 
}

\tableofcontents

\section{Introduction}

The stereographic projection and its inverse are very useful mappings in many situations. One of the reasons for the usefulness is that in addition to being conformal mappings and thus preserving many geometric properties of the space, the inverse of the stereographic projection transforms the unbounded plane into a bounded sphere.
During the past decades, similar mappings have been studied on more general metric measure spaces. Motivated by the stereographic projection, these kind of mappings that transform unbounded spaces into bounded ones are now called \emph{sphericalizations}, and the mappings into the other direction are called \emph{flattenings}. 
First formulated in the non-smooth setting in~\cite{BonkKleiner} and extracted into a more general structured framework in~\cite{BaloghBuckley}, a direct analog of sphericalization and flattening transformation has been explored from the point of view of uniform domains~\cite{BHX,HSX, LPZ} and potential theory~\cite{BBLi, DL15, DL17, LiShan}. Another type of transformation, this time following the framework of~\cite{BaloghBuckley}, was explored in~\cite{GibaraKS,GK} with the goal of developing tools useful in the study of fractional calculus on unbounded spaces. We point out here that our constructions and that of Balogh and Buckley \cite{BaloghBuckley} require that the space is path-connected, however, an alternate sphericalization is possible without requiring curves, as demonstrated in \cite{BBKRT}.

Given such utility of the construction considered in~\cite{BaloghBuckley}, it is natural to ask whether the transformations considered there guarantee preservation of certain properties such as the uniform domain property, and what conditions are needed in addition to obtain preservation of doubling property (geometric measure theoretic notion) and support of a Poincar\'e inequality (an analytic notion). It is also of interest to know how essential these conditions are in order to obtain such a preservation. This is the goal of the present paper.

With suitably chosen parameters, generalizations of transformations proposed in~\cite{BaloghBuckley} can preserve the Dirichlet $p$-energy and $p$-harmonic functions, and therefore these transformations are very useful tools in studying Dirichlet and Neumann boundary value problems in unbounded domains in metric measure spaces. The reason is that in order to be able to apply direct methods of calculus of variations, we need to be able to bound the $L^p$-norm of a Sobolev function by the Sobolev energy of the function. On bounded uniform domains this is always possible, if the space has a doubling measure and supports a Poincar\'e inequality. 
Thus by sphericalizing the space, we can transform a boundary value problem from an unbounded domain into a bounded domain where the direct methods of calculus of variations are easily applicable.
This kind of approach has been used previously in \cite{BBLi} and \cite{GibaraKS}. In \cite{BBLi} the authors construct a transformation of metric and measure that preserves $p$-minimizers and $p$-energy in Ahlfors regular metric spaces, while \cite{GibaraKS} studies the Dirichlet problem in doubling metric measure spaces by constructing a transformation that preserves doubling measures, Poincar\'e inequality, uniform domains and $p$-energy.

In this paper, we define the transformation of the metric and measure as follows. We fix an integrable Borel measurable function $\rho:(0,\infty)\to(0,\infty)$ and define a new metric $d_\rho$ by $d_\rho(x,y):=\inf_\gamma\int_\gamma \rho(d(\cdot, b))ds$, where the infimum is over all rectifiable curves with end points $x,y\in X$ and $b\in\partial X$ is a fixed base point. The transformed measure $\mu_{\rho}$ is defined as a weighted measure with weight $\rho(d(\cdot, b))^{\s}$, where $\s$ is a fixed positive parameter.

As studying different problems often requires different choices of the transformation of metric and measure, the goal of this project is to provide as general as possible conditions for $\rho$ in order to guarantee that the transformation is good in the sense that it preserves uniform domains, doubling property of the measure and $p$-Poincar\'e inequality.
Let us consider three conditions on $\rho$:

\begin{enumerate}
\renewcommand{\labelenumi}{\textbf{\theenumi}}
\renewcommand{\theenumi}{(\Alph{enumi})}
\item\label{condA} \label{rhodoubling}
There exists a constant $C_A$ such that whenever $0 < r \leq 2s+1$ and $0 < s \leq 2r+1$,
\begin{equation*} 
\rho(r) \leq C_A \rho(s).
\end{equation*}
\item\label{condB} \label{upperbound}
There exists a positive constant $C_B$ such that for every $r > 0$
\begin{equation*} 
\int_r^{\infty} \rho(t) dt
\leq C_B (r+1) \rho(r).
\end{equation*}
\item\label{condC} \label{condition-C}
There exists a positive constant $C_C$ such that for every $r > 0$
\begin{equation*} 
\int_{X\setminus B(b,r)} \rho(|x|)^\s d\mu(x)
\le C_C \rho(r)^\s \mu(B(b,r+1)).
\end{equation*}
Here the positive parameter $\s$ is the same as in the definition of the new measure.
\end{enumerate}
Conditions \ref{rhodoubling} and \ref{upperbound} were used by Balogh and Buckley in~\cite{BaloghBuckley}. The condition~\ref{rhodoubling} guarantees that we have some local control for the oscillation of $\rho$ and that $\rho$ cannot decay faster than at polynomial rate.
The condition~\ref{upperbound} enables control of distance to the new point $\infty$. Moreover, when equipped with the measure $\mu$, condition~\ref{condC} gives control of the transformed measure of balls centered at~$\infty$. 

Our main result tells that if $\rho$ is lower semicontinuous and satisfies the conditions \ref{rhodoubling}--\ref{condition-C}, then the transformation preserves, quantitatively, the uniformity of the space, the doubling property of the measure, and the $p$-Poincar\'e inequality.
\begin{thm}
\label{maintheorem}
Let $(X,d)$ be an unbounded uniform space and assume that $\rho$ satisfies conditions~\ref{condA} and~\ref{condB}. Then the following hold true.
\begin{enumerate}

\item 
The space $(X,d_{\rho})$ is bounded, the completion of $X$ with respect to $d_\rho$  adds to the completion with respect to $d$  exactly one point (called $\infty$), and $(X,d_{\rho})$ is a uniform space.

\item Let $(X,d)$ be equipped with a doubling measure $\mu$. Suppose that $\rho$ also satisfies Condition~\ref{condC}. Then $\mu_{\rho}$ is doubling in the space $(X,d_{\rho})$.

\item Let $(X,d)$ be equipped with a doubling measure $\mu$ such that $(X,d,\mu)$ supports a $p$-Poincaré inequality. Suppose that $\rho$ is also lower semicontinuous and satisfies Condition~\ref{condC}. 
Then $(X,d_{\rho},\mu_{\rho})$ supports a $p$-Poincaré inequality.

\end{enumerate}
\end{thm}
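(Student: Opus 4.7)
The plan is to convert upper gradients to the new space and then split the proof by whether a $d_\rho$-ball $B_\rho := B_{d_\rho}(x_0,r)$ stays bounded away from the added point $\infty$ or effectively surrounds it. Given a $p$-integrable upper gradient $g$ of $u$ in $(X,d,\mu)$, I would set $\widetilde g(x) := g(x)/\rho(d(x,b))$. Since the $d_\rho$-length element along any rectifiable curve is $\rho(d(\cdot,b))\,ds$, one has $\int_\gamma \widetilde g\,ds_{d_\rho} = \int_\gamma g\,ds$, so $\widetilde g$ is an upper gradient of $u$ in $(X,d_\rho,\mu_\rho)$; a direct computation moreover gives $\widetilde g^{\,p}\,d\mu_\rho = g^{p}\rho^{\sigma-p}\,d\mu$. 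The lower semicontinuity of $\rho$ enters precisely here to guarantee that the infimum defining $d_\rho$ is realised (up to $\varepsilon$-loss) along actual rectifiable curves and that the $d$- and $d_\rho$-rectifiable curve families agree, so that the upper-gradient property transfers faithfully. Parts~(1) and~(2) of the theorem, already in hand, supply the uniform structure of $(X,d_\rho)$ and the doubling property of $\mu_\rho$ that the Poincar\'e-inequality framework requires on the target side.

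Next, fix a dilation $\lambda$ matched to the original Poincar\'e inequality and consider first the \emph{regular} case in which $\lambda B_\rho$ stays inside a region of the form $B(b,M)\setminus B(b,m)$ for some $0<m<M$ that are comparable (through~\ref{condA} and~\ref{condB}) to $d(x_0,b)$. Iterating~\ref{condA} along a curve in $\lambda B_\rho$ shows that $\rho$ is quasi-constant on $\lambda B_\rho$, with constants depending only on $C_A$. Consequently $d_\rho$ is bi-Lipschitz to a scalar multiple of $d$ on $\lambda B_\rho$, and $d\mu_\rho$ is comparable to a scalar multiple of $d\mu$ there, so the target $p$-Poincar\'e inequality for $(u,\widetilde g)$ follows directly from the source inequality for $(u,g)$ on a comparable $d$-ball, after harmlessly enlarging $\lambda$ by a factor depending on $C_A$ and the uniformity constant.

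The genuinely new situation is when $\lambda B_\rho$ engulfs $\infty$. I would pick $R>0$ so that the $d_\rho$-diameter of $X\setminus B(b,R)$, which by~\ref{condB} is at most $C_B(R+1)\rho(R)$, is comparable to $r$, and decompose $X$ into the central ball $B_0 := B(b,2R)$ and the dyadic shells $A_k := B(b,2^{k+1}R)\setminus B(b,2^kR)$ for $k\ge 1$. On each piece~\ref{condA} makes $\rho$ quasi-constant, so the regular case already handled delivers a local $p$-Poincar\'e inequality on $B_0$ and on each $A_k$ with a uniform constant. A chaining argument across shells yields
\[
|u_{A_k} - u_{B_0}| \;\lesssim\; \sum_{j=0}^{k} 2^jR\,\rho(2^jR)\left(\frac{1}{\mu_\rho(\lambda A_j)}\int_{\lambda A_j}\widetilde g^{\,p}\,d\mu_\rho\right)^{\!1/p}.
\]
Integrating this against $d\mu_\rho$ over $B_\rho$, weighted by the ratios $\mu_\rho(A_k)/\mu_\rho(B_\rho)$, transforms the estimate into a series in $j$. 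Condition~\ref{condC} then supplies the crucial bound $\mu_\rho(X\setminus B(b,2^jR))\lesssim \rho(2^jR)^{\sigma}\mu(B(b,2^jR{+}1))$, which combined with doubling of $\mu$ turns the series into a geometric one whose total is dominated by $r\cdot \big(\mu_\rho(\lambda B_\rho)^{-1}\int_{\lambda B_\rho}\widetilde g^{\,p}\,d\mu_\rho\big)^{1/p}$. Together with the regular case applied to $B_0$ itself, this produces the full $p$-Poincar\'e inequality on $B_\rho$.

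The principal obstacle is the book-keeping in the third step: verifying that the chaining sum is geometrically summable and that every constant in sight depends only on $C_A, C_B, C_C, \sigma, p$, the original Poincar\'e and doubling constants, and the uniformity constant. Here~\ref{condB} calibrates the length scales $2^jR\,\rho(2^jR)$ against the total $d_\rho$-diameter of the tail, and~\ref{condC} calibrates the tail $\mu_\rho$-mass against the $\rho^\sigma$-weighted $\mu$-mass of nearby balls; together with~\ref{condA} they force every telescoping step to be tight. This delicate balance is exactly what the formulations of~\ref{condA}--\ref{condC} are designed to capture, and it is the only part of the argument that does not reduce directly to the existing theory on $(X,d,\mu)$.
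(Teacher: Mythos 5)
Your proposal only addresses part~(c): parts~(a) and~(b), which occupy most of the paper (uniformity of $(X,d_\rho)$, the one-point completion, and the doubling of $\mu_\rho$), are taken as ``already in hand,'' so at best this is a proof of the Poincar\'e statement conditional on the rest of the theorem. Within part~(c), the upper-gradient transformation $\widetilde g=g/\rho(|\cdot|)$, the identity $\widetilde g^{\,p}d\mu_\rho=g^p\rho^{\s-p}d\mu$, the role of lower semicontinuity (so that $\ell_\rho(\gamma)=\int_\gamma\rho(|\cdot|)\,ds$), and the ``regular'' case where the ball stays away from $\infty$ are all sound and match the paper's Proposition~\ref{prop:ug-transform}, Lemma~\ref{lem:lsc} and Lemma~\ref{lem-13}.

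The genuine gap is in the case where the ball engulfs $\infty$: you assert that ``the regular case already handled delivers a local $p$-Poincar\'e inequality on $B_0$ and on each $A_k$ with a uniform constant.'' It does not. The sets $A_k=B(b,2^{k+1}R)\setminus B(b,2^kR)$ are not metric balls, and a Poincar\'e inequality on balls never by itself yields one on annuli; in a general uniform space two points of $A_k$ are only joined by curves that may leave $A_k$ (by Lemma~\ref{lem-5} they stay in a $C_U$-dilated annulus, no better), so obtaining a Poincar\'e inequality on each shell with a constant independent of $k$ already requires a chaining argument of its own. Moreover your consecutive shells are disjoint, so even comparing $u_{A_j}$ with $u_{A_{j+1}}$ needs a Poincar\'e-type estimate on a connecting region, which is again not a ball. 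This is precisely where the analytic difficulty of part~(c) lives, and labelling it ``book-keeping'' of constants hides the missing idea. The paper avoids the annular decomposition altogether: having proved that $(X,d_\rho)$ is uniform (part~(a)) and $\mu_\rho$ doubling (part~(b)), it runs the Boman-chain construction for uniform spaces of \cite{BSh-Uniform} (Lemmas~\ref{lem:GoIn}--\ref{lem:PI-subWhitney}) directly in the sphericalized space, using Lemma~\ref{lem-13} to see that $\rho$ is quasi-constant on each chain ball so the original inequality applies there, and then concludes with a weak-type (Cavalieri/Maz\cprime ya) summation; Condition~\ref{condC} enters only through the doubling of $\mu_\rho$, not through a tail-mass estimate as in your sketch. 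Your route could conceivably be repaired by building ball-chains inside dilated shells using the uniformity of $(X,d)$, but that is exactly the nontrivial machinery your proposal omits, so as written the argument does not go through.
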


Our examples in Subsection~\ref{subsection:AB} demonstrate within the class of bounded, positive, integrable, and quasidecreasing functions $\rho$, the sharpness of the conditions to preserve uniformity: without Conditions~\ref{rhodoubling} and/or \ref{upperbound}, the uniformity property is not preserved in the case of a Euclidean half--plane $\mathbb R^2_+$. 
While these conditions need not be necessary in the setting of specific metric spaces, the example of open half-spaces demonstrates the need for these conditions in the absence of more specific structures.

The paper is organized as follows. In Section~\ref{sect-prelim} we present some preliminary results and introduce the sphericalized metric $d_\rho$. Then in Section~\ref{sect-uniformity} we focus on the uniformity property;
in Subsection \ref{subs:basic} we prove that if $\rho$ is bounded and quasidecreasing, then $(X,d_\rho)$ is bounded and the completion of $X$ with respect to $d_{\rho}$ adds exactly one point at infinity. In Subsection~\ref{subsect-uniformity} we show that if $\rho$ satisfies conditions~\ref{condA} and~\ref{condB}, then sphericalization preserves uniformity, and in Subsection~\ref{subsection:AB}, we provide examples that demonstrate the sharpness of conditions~\ref{condA} and~\ref{condB}. 

Next in Section~\ref{sect-doubling}, we focus on transforming doubling measures using an analogous sphericalization procedure, see Definition \ref{murhodef}.
In Subsection~\ref{sect-necessityC} we show that if $(X,d)$ is uniform and $\rho$ satisfies Conditions~\ref{rhodoubling} and~\ref{upperbound}, then to preserve the doubling property for the transformed measure $\mu_\rho$, $\rho$ must also satisfy Condition~\ref{condition-C}.
Then in Subsection \ref{subsect-doubling} we show that the doubling property is indeed preserved under sphericalization of uniform spaces, if $\rho$ satisfies conditions~\ref{condA}--\ref{condC}. 
As Condition~\ref{condition-C} requires the measure $\mu$, it is only needed in results that involve the new measure $\mu_{\rho}$. 
Therefore Condition~\ref{condition-C} is not required before Section~\ref{sect-doubling} and we postpone discussion of this condition to that section.
Conditions~\ref{rhodoubling} and~\ref{upperbound} are assumed to hold starting from Subsection~\ref{subsect-uniformity}. 

Finally, in Section~\ref{sect-poincare}, we prove that when $(X,d)$ is a uniform metric space with a doubling measure $\mu$ and supporting a $p$-Poincar\'e inequality for some $1\le p<\infty$, then the sphericalized space $(X,d_\rho,\mu_\rho)$ also supports a $p$-Poincar\'e inequality, if $\rho$ is lower semicontinuous and satisfies conditions~\ref{condA}--\ref{condC}. To prove this, we rely on the above established facts that $(X,d_\rho)$ is uniform and $\mu_\rho$ is doubling in $(X,d_\rho)$.

\section{Preliminaries} \label{sect-prelim}

\subsection{Standing assumptions}

The object of study in this paper is an unbounded incomplete metric space $(X,d)$ that is locally compact and rectifiably path-connected.
While we wish to focus on such metric spaces that are uniform domains with unbounded boundaries, much of the initial construction of the sphericalization does not need this assumption. We will start out with minimal assumptions on the unbounded metric space.
Starting from Section \ref{sect-doubling} we assume that the space is uniform and equipped with a doubling measure $\mu$.
In the last section of this paper, Section~\ref{sect-poincare}, we also assume that the metric measure space $(X,d,\mu)$ supports a $p$-Poincaré inequality.
The definitions of these properties are stated in the beginnings of Sections \ref{sect-uniformity}, \ref{sect-doubling} and \ref{sect-poincare}. 
By $\partial X$ we denote the set $\overline X^d\setminus X$, where $\overline X^d$ is the metric completion of $X$ with respect to the metric $d$.

We say that a function $f : (0,\infty) \rightarrow (0,\infty)$ is \emph{quasidecreasing}, if there is a constant $\cqd \geq 1$ such that 
\begin{equation}\label{eq:QDineq}
f(t) \leq \cqd f(s) \text{ whenever }0 < s \leq t.
\end{equation}

We let $\rho: (0,\infty) \rightarrow (0,\infty)$ be a function that is used as the metric density function in the definition of the new sphericalized metric $d_{\rho}$.
We will, for the rest of this paper, assume that $\rho$ is an integrable Borel measurable function.
Throughout the paper we let
\begin{equation*}
B(x,r) := \{ y \in X : d(x,y) < r \},
\end{equation*}
for $x \in \overline X^d$ and $r > 0$.

\subsection{Properties related to the metric density function \texorpdfstring{$\rho$}{}}

In this subsection we gather together some of the basic properties of the metric density function $\rho$ that are consequences of the conditions listed in the introduction. These properties are used throughout this paper.

Condition~\ref{rhodoubling} implies a \emph{doubling condition} for $\rho$; for $r>0$, we have that $C_A^{-1}\rho(r)\le \rho(2r)\le C_A\, \rho(r)$. Thus, whenever $k$ is a positive integer, we have
\begin{equation}\label{eq:power-doubling}
	C_A^{-k}\, \rho(r)\le \rho(2^k\, r)\le C_A^k\, \rho(r).
\end{equation}
The constant $C_A$ in Condition~\ref{rhodoubling} must be larger than $2$, as $\int_0^\infty\rho(t)\, dt$ is finite. 
Indeed, for $2<r\le 4=2^2$, we have from Condition~\ref{rhodoubling} that $\rho(r)\ge C_A^{-1}\, \rho(2)$. 
Proceeding inductively, we have that for each positive integer $j$, $\rho(r)\ge C_A^{-j}\, \rho(2)$, if $2^j<r\le 2^{j+1}$, and so
\[
\int_2^\infty\rho(t)\, dt=\sum_{j=1}^\infty\int_{2^j}^{2^{j+1}}\, \rho(t)\, dt\ge \sum_{j=1}^\infty 2^j\, C_A^{-j}\, \rho(2),
\]
from which we see that $\int_2^\infty\rho(t)\, dt$ is not finite, if $C_A\le 2$.

\begin{remark}
\label{remark-1}
In this remark we gather together consequences of Conditions~\ref{rhodoubling} and~\ref{upperbound} to the metric density function $\rho$.
\begin{enumerate}
\item[(i)]
If Condition \ref{rhodoubling} holds, then
\begin{equation*}
\int_r^{\infty} \rho(t) dt
\geq \int_r^{2r+1} \rho(t) dt
\geq (r+1) \inf_{r \leq s \leq 2r+1} \rho(s)
\geq \frac{1}{C_A} (r+1) \rho(r)
\end{equation*}
for every $r > 0$.
This further implies that $\rho$ is bounded and $\lim_{t \to \infty} t \rho(t) = 0$, as $\int_0^\infty\rho(t)\, dt$ is finite.
\item[(ii)]
If both \ref{rhodoubling} and \ref{upperbound} hold, then $(t+1) \rho(t)$ is quasidecreasing with associated constant $C_AC_B$. Therefore $\rho$ is also quasidecreasing with constant $\cqd=C_AC_B$. To see this, let $0 < r \leq s$. 
Then from the estimate above, we have
\begin{equation*}
(s+1) \rho(s)
\leq C_A \int_s^{\infty} \rho(t) dt
\leq C_A \int_r^{\infty} \rho(t) dt
\leq C_A C_B (r+1) \rho(r).
\end{equation*}
This also tells us that $C_A C_B \geq 1$.
\item[(iii)]
If $\rho$ is quasidecreasing, then
\begin{equation*}
\frac{1}{2} r \rho(r)
\leq \int_{\frac{1}{2} r}^r \cqd\, \rho(t) dt
\leq \cqd\, \int_{\frac{1}{2} r}^{\infty} \rho(t) dt.
\end{equation*}
This implies that $\lim_{t \to \infty} t \rho(t) = 0$, as $\int_0^\infty\rho(t)\, dt$ is finite.
\item[(iv)]
If $\rho$ is quasidecreasing and Condition~\ref{upperbound} holds, then  $(t+1) \rho(t)$ is quasidecreasing. Indeed let $0 < r \leq s$. If $s \leq 2r+1$, then $(s+1) \rho(s) \leq 2\cqd\, (r+1) \rho(r)$. 
If $s \geq 2r+1$, then $(s+1)/(s-r) \le 2$, and so
\begin{equation*}
(s+1) \rho(s)
= \frac{s+1}{s-r} \int_r^s \rho(s) dt
\leq 2 \int_r^s \cqd\, \rho(t) dt
\leq 2 \cqd\, C_B (r+1) \rho(r).
\end{equation*}
\end{enumerate}

\end{remark}
\vspace{0.5cm}
The following lemma is similar to \cite[Lemma 2.2]{BaloghBuckley}.
The difference is that in~\cite{BaloghBuckley}, the sphericalizing function $\rho$ is assumed to be continuous and the condition there that corresponds to our Condition~\ref{upperbound} is slightly different.
Therefore we couldn't directly apply their method.

\begin{lem}\label{BaloghLem}
Assume that Conditions \ref{rhodoubling} and \ref{upperbound} hold.
Then for $0 < r\le s$ we have
\[
(s+1)^{\eps+1}\rho(s)
\le C_A C_B (r+1)^{\eps+1} \rho(r),
\]
where $\eps=\frac{1}{C_A C_B}$.
\end{lem}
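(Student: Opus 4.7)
The plan is to reduce the lemma to a differential inequality satisfied by the tail integral
\[
G(r) := \int_r^{\infty}\rho(t)\, dt, \quad r>0,
\]
which is absolutely continuous on $(0,\infty)$ with $G'(r) = -\rho(r)$ a.e.\ by the integrability of $\rho$.

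First I would combine Remark~\ref{remark-1}(i) and Condition~\ref{condB} to record the two-sided comparison
\[
\tfrac{1}{C_A}(r+1)\rho(r) \le G(r) \le C_B(r+1)\rho(r).
\]
The upper bound rewrites Condition~\ref{condB} as the differential inequality $-G'(r) \ge G(r)/(C_B(r+1))$ a.e., which suggests studying the auxiliary function $H(r) := (r+1)^{1/C_B} G(r)$. A direct differentiation together with the inequality above shows $H'(r) \le 0$ a.e., so $H$ is non-increasing on $(0,\infty)$. Applying $H(s) \le H(r)$ for $s \ge r$ and re-expressing $G$ in terms of $\rho$ via the two-sided comparison yields
\[
(s+1)^{1 + 1/C_B}\rho(s) \le C_A C_B (r+1)^{1+1/C_B}\rho(r).
\]

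The last step is to replace the exponent $1/C_B$ by the smaller $\eps = 1/(C_A C_B)$, which is legitimate since $C_A > 2$ and hence $\eps < 1/C_B$. Because $s \ge r$ and $\eps - 1/C_B \le 0$, we have $(s+1)^{\eps - 1/C_B} \le (r+1)^{\eps - 1/C_B}$; multiplying the previous display by this factor and absorbing it into the $(r+1)$-factor on the right-hand side gives the claim with exponent $\eps + 1$ on both sides. I expect the only delicate point is the justification of the differential inequality for the absolutely continuous $G$ (rather than for an a priori smooth $\rho$); everything else is straightforward algebraic manipulation of the two-sided comparison between $G$ and $(r+1)\rho(r)$.
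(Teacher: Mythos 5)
Your proof is correct, and it reaches the lemma by a noticeably different execution of the same underlying idea. Both arguments hinge on showing that a weighted tail integral $r \mapsto (r+1)^{\kappa}\int_r^\infty \rho(t)\,dt$ is non-increasing and then converting the tail integral into $(r+1)\rho(r)$ via Remark~\ref{remark-1}(i) and Condition~\ref{upperbound}. The paper takes $\kappa=\eps=1/(C_AC_B)$ and proves the monotonicity by a discrete increment argument: it compares the values at $r$ and $r+\delta$ for $0\le\delta\le r+1$, using concavity of $x\mapsto x^{\eps}$ together with the lower bound $\int_r^{r+\delta}\rho(t)\,dt\ge \delta\rho(r)/C_A$ coming from Condition~\ref{rhodoubling}, and thereby never differentiates $G$. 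You instead take $\kappa=1/C_B$ and differentiate: since $G$ is locally absolutely continuous with $G'=-\rho$ a.e., Condition~\ref{upperbound} alone gives $H'\le 0$ a.e.\ for $H(r)=(r+1)^{1/C_B}G(r)$, and since $H$ is itself locally absolutely continuous (product of a $C^1$ function and $G$), this implies $H$ is non-increasing. This yields the stronger intermediate estimate with exponent $1+1/C_B$, with Condition~\ref{rhodoubling} entering only at the conversion step through Remark~\ref{remark-1}(i); the final downgrade of the exponent to $\eps+1$ is legitimate because $\eps\le 1/C_B$ and $s+1\ge r+1$, so $(s+1)^{\eps-1/C_B}\le(r+1)^{\eps-1/C_B}$. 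The payoff of your route is a cleaner differential-inequality argument and a slightly sharper exponent; the payoff of the paper's route is that it sidesteps a.e.\ differentiation and absolute-continuity bookkeeping entirely (relevant since $\rho$ is only Borel measurable), at the cost of the smaller exponent $\eps$.
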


\begin{proof}
From Remark \ref{remark-1}(ii) we know that $C_AC_B\ge 1$, and so with our choice of $\eps$ we get $0 < \eps \leq 1$. Then from the concavity of the function $(0,\infty)\ni x\to x^\eps$ we see that 
\begin{equation*}
(r+1+\delta)^{\eps} - (r+1)^{\eps}
\leq \delta \eps (r+1)^{\eps-1}
\end{equation*}
for every $\delta \geq 0$.
Especially if $0 \leq \delta \leq r+1$, we get
\begin{align*}
&(r+1)^{\eps} \int_r^{\infty} \rho(t) dt - (r+\delta+1)^{\eps} \int_{r+\delta}^{\infty} \rho(t) dt
\\
= &((r+1)^{\eps} - (r+\delta+1)^{\eps}) \int_r^{\infty} \rho(t) dt + (r+\delta+1)^{\eps} \int_r^{r+\delta} \rho(t) dt
\\
\geq &- \delta \eps (r+1)^{\eps-1} C_B (r+1) \rho(r) + (r+1)^{\eps} \delta \frac{1}{C_A} \rho(r)
= 0,
\end{align*}
where we used \ref{rhodoubling}, \ref{upperbound} and our choice of $\eps$.
Thus the function $r \rightarrow (r+1)^{\eps} \int_r^{\infty} \rho(t) dt$ is decreasing and therefore by Remark~\ref{remark-1}(i) and~\ref{upperbound}, when $0<r\le s$,
\begin{align*}
(s+1)^{\eps+1} \rho(s)
&\leq (s+1)^{\eps} C_A \int_s^{\infty} \rho(t) dt
\leq (r+1)^{\eps} C_A \int_r^{\infty} \rho(t) dt
\\
&\leq C_A C_B (r+1)^{\eps+1} \rho(r).
\qedhere
\end{align*}
\end{proof}

The next lemma shows that if $\rho$ satisfies the condition $\int_r^\infty\rho(t)\, dt \simeq (r+~1)\rho(r)$, then $\rho$ must satisfy Condition~\ref{rhodoubling}. 
As a consequence of the lemma and Remark~\ref{remark-1}(i) we know that this condition on $\int_r^\infty\rho(t)\, dt$ is equivalent to Conditions~\ref{condA} and~\ref{condB} together. A benefit of this condition on $\int_r^\infty\rho(t)\, dt$ is the ease of obtaining estimates for the metric $d_{\rho}$ between points in $X$ and the ``point at $\infty$'' that is the new boundary point of $X$ under the transformed metric, as in Lemma~\ref{lem-4}.

\begin{lem}
\label{lem-lowerbound}
Assume that there exists a constant $C \geq 1$ such that
\begin{equation}
\label{equivalentcondition}
\frac{1}{C} (r+1) \rho(r)
\leq \int_r^{\infty} \rho(t) dt
\leq C (r+1) \rho(r)
\end{equation}
for every $r > 0$.
Then there exists a constant $C' \geq 1$, depending only on $C$, such that $\rho(r) \leq C' \rho(s)$ whenever $r , s > 0$ and $s \leq 2r+1$.
\end{lem}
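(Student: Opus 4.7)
The plan is to convert the assumed two-sided equivalence $\int_r^\infty\rho(t)\,dt\simeq (r+1)\rho(r)$ into a logarithmic differential inequality and then integrate it. Set $F(r):=\int_r^\infty\rho(t)\,dt$, so \eqref{equivalentcondition} reads $(r+1)\rho(r)/C \le F(r)\le C(r+1)\rho(r)$. Since $\rho$ is integrable and strictly positive, $F$ is positive, strictly decreasing, and absolutely continuous with $F'=-\rho$ almost everywhere. The upper bound in \eqref{equivalentcondition} yields $-F'(r)\le CF(r)/(r+1)$ a.e., which, after dividing by $F>0$, becomes the logarithmic inequality $-(\log F)'(r)\le C/(r+1)$ a.e.

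I would then split into two cases according to whether $s\le r$ or $r\le s\le 2r+1$. For $s\le r$, monotonicity of $F$ together with the two-sided bound gives
\[
\frac{r+1}{C}\,\rho(r)\le F(r)\le F(s)\le C(s+1)\rho(s)\le C(r+1)\rho(s),
\]
so $\rho(r)\le C^2\rho(s)$. For the harder case $r\le s\le 2r+1$, integrating the logarithmic inequality from $r$ to $s$ yields
\[
\log\frac{F(r)}{F(s)}\le C\log\frac{s+1}{r+1}\le C\log 2,
\]
hence $F(r)\le 2^C F(s)$. Combining this with $\rho(r)\le CF(r)/(r+1)$ and $F(s)\le C(s+1)\rho(s)\le 2C(r+1)\rho(s)$ gives $\rho(r)\le 2^{C+1}C^2\rho(s)$, so $C':=2^{C+1}C^2$ works.

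The main obstacle I anticipate is the rigorous passage from the a.e.\ pointwise inequality $-(\log F)'\le C/(r+1)$ to the integral comparison of $\log F$. This step is clean because $F$ is absolutely continuous and, being positive and continuous, is bounded away from zero on every compact subinterval of $(0,\infty)$; hence $\log F$ is absolutely continuous there and the fundamental theorem of calculus legitimates integration of the a.e.\ inequality. Everything else reduces to substituting the uniform comparisons from \eqref{equivalentcondition}.
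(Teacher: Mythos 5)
Your proof is correct, and it takes a genuinely different route from the paper's. The paper handles the case $r<s\le 2r+1$ by a discrete iteration: it splits the integral $\int_r^\infty\rho$ at $r+\tfrac{1}{2C^3}(r+1)$ to obtain a multiplicative step $\rho(r)\le(2C^2+\tfrac1C)\,\rho\bigl(r+\tfrac{1}{2C^3}(r+1)\bigr)$ and then iterates a bounded number of times $m$ (bounded in terms of $C$ alone) until $\bigl(1+\tfrac{1}{2C^3}\bigr)^m(r+1)-1$ reaches $2r+1$. Your approach instead observes that $F(r):=\int_r^\infty\rho(t)\,dt$ satisfies $F'=-\rho$ a.e., converts the lower bound of \eqref{equivalentcondition} into the differential inequality $-(\log F)'\le C/(r+1)$, and integrates it — a Gronwall-type argument. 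Both are valid; yours is slightly cleaner, yields the tidy explicit constant $C'=2^{C+1}C^2$, and sidesteps the bookkeeping of the iteration, at the modest cost of invoking absolute continuity of $F$ and of $\log F$ on compact subintervals. (One small slip: the inequality $-F'(r)\le CF(r)/(r+1)$ comes from the \emph{lower} bound $\tfrac1C(r+1)\rho(r)\le F(r)$ of \eqref{equivalentcondition}, not the upper bound as you wrote; the upper bound is the one you use later to pass from $F(s)$ back to $\rho(s)$.)
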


\begin{proof}
For every $0 < s \leq r$ we clearly have $(r+1) \rho(r) \leq C^2 (s+1) \rho(s)$ from our hypothesis, and therefore $\rho(r) \leq C^2 \rho(s)$, meaning that $\rho$ is quasidecreasing with constant $\cqd=C^2$. This also proves the claim for the case $0<s\le r$. Now assume that $0 < r < s \leq 2r + 1$.
For every $r > 0$ we have
\begin{align*}
\frac{1}{C} (r+1) \rho(r)
&\leq \int_r^{\infty} \rho(t) dt
\\
&= \int_r^{r + \frac{1}{2 C^3}(r+1)} \rho(t) dt + \int_{r + \frac{1}{2 C^3}(r+1)}^{\infty} \rho(t) dt
\\
&\leq \frac{1}{2 C^3} (r+1) C^2 \rho(r) + C \left( 1 + \frac{1}{2 C^3} \right) (r+1) \rho \left( r + \frac{1}{2 C^3}(r+1) \right),
\end{align*}
where we have used the above-established fact that $\rho$ is quasidecreasing, and therefore
\begin{equation*}
\rho(r)
\leq \left( 2 C^2 + \frac{1}{C} \right) \rho \left( r + \frac{1}{2 C^3}(r+1) \right).
\end{equation*}
By iterating the previous result we get for each positive integer $m$ that
\[
\rho(r)\le \left(2C^2+\frac{1}{C}\right)^m\, \rho\left(\left(1+\frac{1}{2C^3}\right)^m\, (r+1) - 1\right).
\]
Then by choosing $m$ so that
\[
\left(1+\frac{1}{2C^3}\right)^{m-1}\, (r+1)-1
< 2r+1
\le \left(1+\frac{1}{2C^3}\right)^m\, (r+1)-1,
\]
we obtain
\begin{align*}
\rho(r)
&\leq \left(2C^2+\frac{1}{C}\right)^m\, \rho\left(\left(1+\frac{1}{2C^3}\right)^m\, (r+1) - 1\right)
\\
&\leq \left( 2 C^2 + \frac{1}{C} \right)^{1 + \frac{1}{\log_2 \left( 1+\frac{1}{2 C^3} \right) }} C^2 \rho(s).
\qedhere
\end{align*}

\end{proof}

\begin{remark}

Remark \ref{remark-1}(i) and Lemma \ref{lem-lowerbound} tell us that instead of Conditions \ref{rhodoubling} and \ref{upperbound} we could equivalently assume that \eqref{equivalentcondition} holds for every $r > 0$.

\end{remark}

\begin{example}
Consider the example function $\rho(t) = (t+2)^{\alpha} (\log(t+2))^{\beta}$ with parameters $\alpha$ and $\beta$.
If $\alpha > -1$, then $\rho$ is not integrable.
If $\alpha < -1$, then $\rho$ is integrable and it satisfies \ref{rhodoubling} and \ref{upperbound}.
Now let $\alpha = -1$.
If $\beta \geq -1$, then $\rho$ is not integrable.
If $\beta < -1$, then $\rho$ is integrable, bounded and (quasi)decreasing and it satisfies \ref{rhodoubling}, but it does not satisfy \ref{upperbound}.
Conversely the function $\rho(t) = e^{-t}$ is integrable, bounded and (quasi)decreasing and it satisfies \ref{upperbound}, but it does not satisfy \ref{rhodoubling}.
\end{example}
See Section \ref{sect-doubling} for examples demonstrating the role of Condition \ref{condition-C}.

\subsection{The sphericalized metric \texorpdfstring{$d_{\rho}$}{}}

We are now ready to define the \emph{sphericalized metric} $d_\rho$ associated with the function~$\rho$. To do so, we fix a base point $b \in \partial X$ and set $|x| := d(b,x)$.
Our preliminary standing assumption is that $\rho:(0,\infty)\to(0,\infty)$ is an integrable Borel measurable function, but as we progress in the paper, we will add the assumptions \ref{rhodoubling}, \ref{upperbound} and~\ref{condition-C}.

A \emph{curve} is a continuous mapping $\gamma\colon [s,t] \rightarrow X$. 
We call the image set $\gamma([s,t])$ also a curve and denote it just by $\gamma$.
The length of $\gamma$, with respect to the metric $d$, is defined by
\begin{equation*}
\ell_d(\gamma)
:= \sup \sum_{j=1}^n d(\gamma(t_{j-1}),\gamma(t_j)),
\end{equation*}
where the supremum is taken over all partitions $s = t_0 < t_1 < ... < t_n = t$ of the interval $[s,t]$. A curve is said to be \emph{rectifiable} with respect to the metric $d$, if $\ell_d(\gamma)<\infty$. For a rectifiable curve $\gamma\colon [s,t] \to X$ there is the associated \emph{length function} $s_d\colon[s,t]\to[0,\ell_d(\gamma)]$ defined by $s_d(u):=\ell_d(\gamma\vert_{[s,u]})$, where $\gamma\vert_{[s,u]}$ is the restriction of $\gamma$ on the interval $[s,u]$. This $s_d$ is absolutely continuous if and only if $\gamma$ is absolutely continuous. 
The parametrization of a rectifiable curve $\gamma$ by arc-length with respect to the metric $d$ is the reparametrization of $\gamma$ as $\gamma_d : [0,\ell_d(\gamma)] \rightarrow X$ so that for every $0 \leq u \leq \ell_d(\gamma)$, $\ell_d(\gamma_d\vert_{[0,u]}) = u$, and $\gamma (u)=\gamma_d\circ s_d(u)$ for all $u\in [s,t]$. 
Note that this kind of reparametrization exists for all curves that are rectifiable with respect to the metric $d$.

The \textit{path integral} of a non-negative Borel function $g\colon X\to[0,\infty]$ over rectifiable curves is defined by using the param\-etrization by arc-length with respect to the metric $d$, i.e.
\begin{equation*}
\int_{\gamma} g\, ds
:= \int_0^{\ell_d(\gamma)} g(\gamma_d(u)) du.
\end{equation*}
In case $\gamma\colon [s,t]\to X$ is absolutely continuous, also $s_d\colon[s,t]\to [0,\ell_d(\gamma)]$ is absolutely continuous and therefore the derivative $s_d'$ exists almost everywhere. Thus by the change of variables
\begin{equation}\label{eqn:pathintegral}
\begin{split}
    \int_\gamma g\,ds&=\int_0^{\ell_d(\gamma)} g(\gamma_d(u)) du \\
    &=\int_s^t g(\gamma_d(s_d(u)))s_d'(u) du=\int_s^t g(\gamma(u))s_d'(u) du.
    \end{split}
\end{equation}
We refer the interested reader to~\cite[Theorem~4.1]{Vai} for more on path integrals in the Euclidean setting, and to~\cite[Chapter~5]{HKSTbook} for the more general metric setting.

\begin{deff}
We define the new sphericalized metric $d_\rho$ by
\begin{equation*}
d_{\rho}(x,y)
:= \inf_{\gamma \in \Gamma(x,y)} \int_{\gamma} \rho(|\cdot|) ds.
\end{equation*}
Here $\Gamma(x,y)$ is the set of rectifiable curves with end points $x$ and $y$.
\end{deff}

The next lemma describes conditions under which $d_\rho$ is a metric on $X$.

\begin{lem}
\label{lem-6}$\,$
\begin{enumerate}
\item[(i)]
Suppose that for every  $\eps>0$ and for every $M \geq \eps$ we have $\inf_{\eps \leq r \leq M} \rho(r) > 0$.
Then $d_{\rho}$ is symmetric, satisfies the triangle inequality and $d_{\rho}(x,y) = 0$ if and only if $x=y$.
\item[(ii)]
Suppose that for every $\eps>0$ we have $\sup_{r \geq \eps} \rho(r) < \infty$.
Then $d_{\rho}(x,y) < \infty$ for every $x , y \in X$.
\end{enumerate}
\noindent In particular, if for every $\eps>0$ and for every $M \geq \eps$ we have $\inf_{\eps \leq r \leq M} \rho(r) > 0$ and $\sup_{r \geq \eps} \rho(r) < \infty$, then $(X,d_{\rho})$ is a metric space.
\end{lem}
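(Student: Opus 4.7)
The statement packages together the four metric axioms plus finiteness for $d_\rho$, so the plan is to verify them in order of difficulty: symmetry and triangle inequality are free, the finiteness claim (ii) needs a compactness argument to avoid the base point $b$, and the non-degeneracy clause in (i) is the substantive step. Symmetry follows because reversing a rectifiable curve $\gamma:[s,t]\to X$ by $\tilde\gamma(u)=\gamma(s+t-u)$ produces a curve with the same length function and hence the same path integral. The triangle inequality follows by concatenation: given $\eta>0$, pick $\gamma_1\in\Gamma(x,y)$ and $\gamma_2\in\Gamma(y,z)$ with $\int_{\gamma_i}\rho(|\cdot|)\,ds$ within $\eta/2$ of the infima; their concatenation lies in $\Gamma(x,z)$ and has integral at most $d_\rho(x,y)+d_\rho(y,z)+\eta$. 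Finally, the constant curve at $x$ gives $d_\rho(x,x)=0$, so only the converse requires work.

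\textbf{Finiteness (ii).} Since $(X,d)$ is rectifiably path-connected, pick a rectifiable $\gamma\in\Gamma(x,y)$. Its image is compact in $X$ and $b\in\bdy X\setminus X$, so $\eps:=\inf_{z\in\gamma}|z|>0$; otherwise a sequence in $\gamma$ with $|z_n|\to 0$ would, after extracting a subsequence by compactness, converge in $X$ to a point at $d$-distance $0$ from $b$, contradicting $b\notin X$. Hence $\rho(|z|)\le \sup_{r\ge\eps}\rho(r)=:M<\infty$ on $\gamma$, and $d_\rho(x,y)\le\int_\gamma\rho(|\cdot|)\,ds\le M\,\ell_d(\gamma)<\infty$.

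\textbf{Non-degeneracy (the main obstacle).} Suppose $x\ne y$ and, without loss of generality, $|x|\le|y|$. Using the hypothesis of (i) with $\eps=|x|/2$ and $M=2|x|$, set
\[
c:=\inf_{|x|/2\le r\le 2|x|}\rho(r)>0.
\]
The key observation is that on any $\gamma\in\Gamma(x,y)$ the scalar function $u\mapsto|\gamma(u)|$ is continuous, so I can compare $\gamma$ to the ``annulus'' $A:=\{z\in X:|z|\in[|x|/2,2|x|]\}$. Two cases arise. If $\gamma\subset A$, then $\rho(|\cdot|)\ge c$ pointwise on $\gamma$, and the path integral is at least $c\,\ell_d(\gamma)\ge c\,d(x,y)$. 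Otherwise there is a first parameter $u_0$ at which $\gamma$ exits $A$; by continuity $|\gamma(u_0)|\in\{|x|/2,2|x|\}$, so the reverse triangle inequality gives $d(x,\gamma(u_0))\ge \bigl||x|-|\gamma(u_0)|\bigr|\ge |x|/2$, and the restricted sub-curve stays in $A$ with $d$-length at least $|x|/2$, yielding integral at least $c\,|x|/2$. Taking the infimum over $\gamma$, one obtains
\[
d_\rho(x,y)\ge c\,\min\!\bigl(d(x,y),\,|x|/2\bigr)>0,
\]
which finishes (i) and, combined with (ii), the concluding statement. The only genuine technical care is ensuring $|\gamma|$ does not dip to zero and that the ``first exit'' is extracted correctly from a general rectifiable (not necessarily monotone) curve; both are handled by continuity of $d(b,\cdot)$ and the fact that the exit set is closed.
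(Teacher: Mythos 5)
Your proof is correct and follows essentially the same route as the paper's: symmetry and the triangle inequality are formal, finiteness comes from compactness of the trajectory bounding $|\cdot|$ away from $0$, and non-degeneracy comes from bounding $\rho$ below on a subarc of $\gamma$ near $x$. Your annulus $[|x|/2,2|x|]$ with an explicit first-exit case split is a minor bookkeeping variant of the paper's use of $\gamma\cap B(x,r_0)$ with $r_0=\min\{|x|/2,d(x,y)\}$ and the interval $[|x|/2,3|x|/2]$; both yield the same lower bound of the form $c\cdot\min\{d(x,y),|x|/2\}$.
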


Note that the condition in Lemma~\ref{lem-6}(i) will hold, if we assume that $\rho$ is quasidecreasing or that $\rho$ is lower semicontinuous.
Also the condition in Lemma~\ref{lem-6}(ii) will hold, if we assume that $\rho$ is quasidecreasing.
When $\rho$ satisfies both \ref{rhodoubling} and \ref{upperbound}, by Remark \ref{remark-1}(ii) we know that $\rho$ is quasidecreasing.

\begin{proof}
Proof of (i):
From the definition it is clear that $d_{\rho}$ is symmetric, it satisfies the triangle inequality and $d_{\rho}(x,x)=0$ for every $x \in X$. Assume now that $x \neq y$. Let $\gamma$ be a curve that connects $x$ to $y$.
Let $r_0 := \min\{|x|/2,d(x,y)\}$. Then
\begin{equation*}
\int_{\gamma} \rho(|\cdot|) ds
\geq \int_{\gamma \cap B(x,r_0)} \rho(|\cdot|) ds
\geq r_0 \inf_{z \in \gamma \cap B(x,r_0)} \rho(|z|)
\geq r_0 \inf_{\frac{1}{2}|x| \leq r \leq \frac{3}{2}|x|} \rho(r).
\end{equation*}
By taking infimum over all curves we get $d_{\rho}(x,y) > 0$.

Proof of (ii):
Since $(X,d)$ is rectifiably path-connected, there exists a rectifiable curve $\gamma \subset X$ connecting $x$ to $y$. 
Then because $[0,\ell_d(\gamma)]$ is compact and the function $t\mapsto|\gamma(t)|$ is continuous, this function attains its minimum, which is positive because $b \notin X$. Let $\delta$ be this minimum. Then
\begin{equation*}
d_{\rho}(x,y)
\leq \int_{\gamma} \rho(|\cdot|) ds
\leq \ell_d(\gamma) \sup_{r \geq \delta} \rho(r)
< \infty.
\qedhere
\end{equation*}
\end{proof}

One of the three foci of this paper is the notion of uniform domains, which requires that we deal with a locally compact but non-complete metric space, and then we consider the boundary of the space to be the collection of all new points obtained in the completion of the space. Distance from points in the space $X$ to the boundary plays a key role in the notion of uniform domains.
We denote the distance to the boundary with respect to the metric $d$ by 
\begin{equation} \label{eqn:bdry-dist}
d_X(z):=\dist_d(z,\partial X):=\inf\{d(z,w)\, :\, w\in\partial X\},
\end{equation}
and the distance to the boundary with respect to the transformed metric $d_\rho$ by 
\[
d_{X,\rho}(z) := \dist_{\rho}(z,\partial_{\rho} X):=\inf\{d_\rho(z,w)\, :\, w\in\partial_\rho X\}.
\]
Here $\partial_{\rho} X := \overline X^{d_{\rho}} \setminus X$ with $\overline{X}^{d_\rho}$ the metric completion of $X$ with respect to the metric $d_\rho$. Recall that $\partial X = \overline X^d \setminus X$ with $\overline{X}^d$ the metric completion of $X$ with respect to the metric $d$.
Section~\ref{sect-uniformity} gives the definition of uniform domains.

We use the notation $\ell_\rho(\gamma)$ for the length of a curve $\gamma:[s,t]\to X$ under the metric $d_\rho$ i.e.,
\begin{equation*}
\ell_{\rho}(\gamma)
:= \ell_{d_{\rho}}(\gamma)
:= \sup \sum_{j=1}^n d_{\rho}(\gamma(t_{j-1}),\gamma(t_j)),
\end{equation*}
where the supremum is over all finite partitions $s=t_0<t_1<\cdots<t_n=t$ of the interval $[s,t]$.
This implies that
\begin{equation}
\label{rholength}
\ell_{\rho}(\gamma)
\leq \int_{\gamma} \rho(|\cdot|) ds.
\end{equation}
If $\rho$ is continuous and $X$ has the same topology with the length metric and the metric $d$, then we have equality in~\eqref{rholength}, see~\cite[Proposition A.7]{BHK}.
However, except in Section~\ref{sect-poincare}, we don't assume continuity or even lower semicontinuity of $\rho$, so we only have the inequality.

We complete this section with a lower bound for the metric $d_{\rho}$. Note that at this point we are only assuming that $\rho$ is an integrable positive Borel measurable function.

\begin{lem}
\label{lem-2}
Let $x,y \in X$ such that $|x| \leq |y|$. Then
\begin{equation*}
d_{\rho}(x,y)
\geq \int_{|x|}^{|y|} \rho(t) dt.
\end{equation*}
\end{lem}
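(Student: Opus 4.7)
The plan is to fix an arbitrary rectifiable curve $\gamma\in\Gamma(x,y)$, reduce the path integral $\int_\gamma\rho(|\cdot|)\,ds$ to a one-dimensional integral of $\rho$ along the radial function $u\mapsto|\gamma(u)|$, bound that integral from below using the area formula (Banach's indicatrix theorem), and finally take the infimum over $\gamma$. The key observation is that the radial function is automatically $1$-Lipschitz, and by continuity it takes every value in $[|x|,|y|]$.

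First, I would parametrize $\gamma$ by arclength with respect to $d$ to obtain $\gamma_d\colon[0,\ell_d(\gamma)]\to X$ and set $\phi(u):=|\gamma_d(u)|=d(b,\gamma_d(u))$. By the very definition of the path integral used in the paper,
\[
\int_\gamma\rho(|\cdot|)\,ds=\int_0^{\ell_d(\gamma)}\rho(\phi(u))\,du.
\]
Since $\gamma_d$ is $1$-Lipschitz with respect to $d$ (a direct consequence of the arclength parametrization), the triangle inequality gives $|\phi(u)-\phi(v)|\le d(\gamma_d(u),\gamma_d(v))\le|u-v|$. Thus $\phi$ is absolutely continuous on $[0,\ell_d(\gamma)]$ with $|\phi'|\le 1$ almost everywhere, $\phi(0)=|x|$, and $\phi(\ell_d(\gamma))=|y|$.

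Next, using $|\phi'|\le 1$ together with the area formula for absolutely continuous real-valued functions of one real variable,
\[
\int_0^{\ell_d(\gamma)}\rho(\phi(u))\,du\;\ge\;\int_0^{\ell_d(\gamma)}\rho(\phi(u))\,|\phi'(u)|\,du\;=\;\int_0^{\infty}\rho(t)\,N_\phi(t)\,dt,
\]
where $N_\phi(t):=\#\phi^{-1}(t)$. By the continuity of $\phi$ and the intermediate value theorem, $N_\phi(t)\ge 1$ for every $t\in[|x|,|y|]$ (recall that $|x|\le|y|$), so the right-hand side is at least $\int_{|x|}^{|y|}\rho(t)\,dt$. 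Taking the infimum over $\gamma\in\Gamma(x,y)$ then yields the desired lower bound.

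I do not foresee any real obstacle: both the reduction to $\phi$ and the Banach indicatrix identity are classical once $\phi$ is known to be $1$-Lipschitz, and no assumption on $\rho$ beyond nonnegativity and Borel measurability is needed. The only subtlety worth double-checking is the validity of the area formula in our setting, but this follows immediately from the absolute continuity of $\phi$ established above.
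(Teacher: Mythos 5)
Your argument is correct, and it follows the same basic strategy as the paper—reduce to the $1$-Lipschitz radial function $\phi(u)=|\gamma_d(u)|$ and exploit, via the intermediate value theorem, that $\phi$ must sweep out all of $[|x|,|y|]$—but the technical implementation is genuinely different. The paper never differentiates $\phi$: it observes that the superlevel set $\{s\in[|x|,|y|]:\rho(s)>t\}$ is contained in the image under $\phi$ of $\{s:\rho(|\gamma(s)|)>t\}$, uses that a $1$-Lipschitz map does not increase $\mathcal{H}^1$, and concludes by Cavalieri's principle. You instead invoke the a.e.\ bound $|\phi'|\le 1$ and the one-dimensional area formula (Banach indicatrix), writing $\int_0^{\ell_d(\gamma)}\rho(\phi(u))|\phi'(u)|\,du=\int_0^\infty\rho(t)\,N_\phi(t)\,dt$ and then $N_\phi\ge 1$ on $[|x|,|y|]$. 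Both routes need only that $\rho$ is nonnegative and Borel, so nothing is lost in generality; the paper's version is slightly more elementary in that it avoids a.e.\ differentiability and the measurability of the indicatrix $N_\phi$ (which you should at least cite, e.g.\ the area formula for Lipschitz functions of one variable), while your version makes the geometric mechanism—each level $t\in[|x|,|y|]$ is hit at least once and contributes at least $\rho(t)$—more transparent. The final step, taking the infimum over $\gamma\in\Gamma(x,y)$, is identical in both.
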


\begin{proof}
Let $\gamma \in \Gamma(x,y)$ and assume that $\gamma$ is parametrized by arc-length with respect to $d$.
Define $g\colon [0,\ell_d(\gamma)]\to [0,\infty)$ with $g(t)=|\gamma(t)|$. Then $g$ attains every value between $|x|$ and $|y|$, and thus
\begin{equation*}
    \{s\in [|x|,|y|]:\rho(s)>t\}\subset g(\{s\in [0,\ell_d(\gamma)]:\rho(|\gamma(s)|)>t\}). 
\end{equation*}
Moreover, since $g$ is 1-Lipschitz, we have $\mathcal{H}^1(g(A))\le \mathcal{H}^1(A)$ for $A\subset [0,\ell_d(\gamma)]$ (see for example \cite[Proposition 11.18]{Fol}). 
Here $\mathcal{H}^1$ is the $1$-dimensional Hausdorff measure. Thus together with Cavalieri's principle (i.e. a special case of Fubini's theorem \cite[Proposition 6.24]{Fol}) 
\begin{align*}
\int_{|x|}^{|y|}\rho(t)\,dt&=\int_0^\infty\mathcal{H}^1(\{s\in [|x|,|y|]:\rho(s)>t\})\,dt\\
&\le \int_0^\infty\mathcal{H}^1\left(g(\{s\in [0,\ell_d(\gamma)]:\rho(|\gamma(s)|)>t\})\right)\,dt\\
&\le \int_0^\infty\mathcal{H}^1\left(\{s\in [0,\ell_d(\gamma)]:\rho(|\gamma(s)|)>t\}\right)\,dt\\
&=\int_0^{\ell_d(\gamma)} \rho(|\gamma(t)|)\,dt.
\end{align*}
Since this holds for all curves $\gamma\in \Gamma(x,y)$, we get the result by taking the infimum over all $\gamma$.
\end{proof}

\section{Uniformity}
\label{sect-uniformity}

This section is devoted to the geometric notion of uniform domain, also called uniform space. We explore how this geometric property is transformed under the new metric $d_\rho$. Since the notion of uniformity is a purely metric notion, we do not consider any measure in this section. The main result of this section, Theorem~\ref{thm-uniformity}, forms part of the main theorem of this paper, Theorem \ref{maintheorem}.

\begin{deff}
Let $C_U \geq 1$. For a metric space $(X,d)$, a curve $\gamma \subset \overline X^d$ connecting points $x,y\in\overline X^d$ is said to be a \emph{$C_U$-uniform curve in $(X,d)$}, if
\begin{itemize}
\item[(i)] $\ell_d(\gamma) \leq C_U d(x,y)$ and
\item[(ii)] $\min\{ \ell_d(\gamma_{xz}) , \ell_d(\gamma_{zy}) \}
\leq C_U d_X(z)$ for every $z \in \gamma$.
\end{itemize}
A metric space $(X,d)$ is \emph{uniform}, if $(X,d)$ is locally compact but not complete and there exists a constant $C_U \geq 1$ such that for every two points $x , y \in X$ there exists a curve $\gamma \subset X$ connecting $x$ to $y$ such that $\gamma$ is $C_U$-uniform in $(X,d)$.
\end{deff}

We refer the reader to \eqref{eqn:bdry-dist} for the definition of $d_X$.
The first condition is known as \emph{quasiconvexity} and the second condition is known as the \emph{twisted cone condition}.
Here $\gamma_{xz}$ denotes any subarc of $\gamma$ between $x$ and $z$.
While a uniform curve $\gamma$ can intersect $\partial X$, by the twisted cone condition it can do so only at its terminal points.

If $(X,d)$ is uniform, then $(\overline{X}^d,d)$ is proper by \cite[Proposition 2.20]{BHK}.
The goal of this section is to show that the uniformity of $(X,d)$ implies the uniformity of  $(X,d_{\rho})$.
Then it also follows from Theorem \ref{thm-extension} below that any point $x \in X$ can be connected to the point $\infty$ by a curve that is uniform in $(X,d_{\rho})$.
The point $\infty$ is defined right before Theorem \ref{thm-9} below.

First, in Subsection~\ref{subsect-homeo}, we show that $(X,d)$ and $(X,d_\rho)$ are homeomorphic and the set of compact
rectifiable curves is the same in these two spaces.
Then, in Subsection~\ref{subsect-extension}, we prove a general extension result, a folklore that is well-known to experts, for uniform spaces. 
In Subsection \ref{subs:basic} we show that $\partial_\rho X=\partial X\cup\{\infty\}$ and $(X,d_{\rho})$ is bounded provided that $(X,d)$ is uniform.
In Subsection~\ref{subsection:AB}, we prove the sharpness of Conditions~\ref{rhodoubling} and \ref{upperbound} for $\rho$.
Finally, in Subsection \ref{subsect-uniformity}, we prove the main result of this section, which is that the uniformity of $(X,d)$ implies the uniformity of $(X,d_{\rho})$.

\subsection{Length of curves in quasiconvex spaces}
\label{subsect-homeo}

In this subsection we show that the change of metric creates a homeomorphism and that it preserves rectifiability of curves. For these results it is enough that the space is quasiconvex; uniformity is not needed.

\begin{prop}\label{homeo}
Suppose that $(X,d)$ is quasiconvex and that for every $\eps > 0$ and for every $M \geq \eps$ we have $\inf_{\eps \leq r \leq M} \rho(r) > 0$ and $\sup_{r \geq \eps} \rho(r) < \infty$. 
Then the identity map $\mathrm{id}\colon(X,d)\to(X,d_\rho)$ is a homeomorphism.
\end{prop}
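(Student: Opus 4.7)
The plan is to show continuity of $\mathrm{id}\colon(X,d)\to(X,d_\rho)$ and of its inverse at each $x\in X$. The key observation is that the two hypotheses on $\rho$ provide uniform upper and lower bounds for $\rho(|\cdot|)$ on any annulus around the basepoint $b$ with strictly positive inner radius and finite outer radius, and quasiconvexity allows us to keep connecting curves inside such annuli whenever the endpoints are close in $d$.

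For continuity of $\mathrm{id}$ at $x$, I would fix $y\in X$ with $d(x,y)\le |x|/(4C)$, where $C$ is the quasiconvexity constant. Quasiconvexity supplies a curve $\gamma\in\Gamma(x,y)$ with $\ell_d(\gamma)\le C\,d(x,y)\le |x|/4$, so by the triangle inequality every $z\in\gamma$ satisfies $3|x|/4\le |z|\le 5|x|/4$. The upper-bound hypothesis then gives $\rho(|z|)\le K:=\sup_{r\ge 3|x|/4}\rho(r)<\infty$ along $\gamma$, and integrating yields
\[
d_\rho(x,y)\le \int_\gamma \rho(|\cdot|)\,ds\le K\,\ell_d(\gamma)\le KC\,d(x,y).
\]
Thus $\mathrm{id}$ is in fact locally Lipschitz.

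For continuity of $\mathrm{id}^{-1}$ at $x$, I would argue by contradiction. Suppose $d_\rho(x_n,x)\to 0$ but, along a subsequence, $d(x_n,x)\ge\delta$ for some $\delta>0$. Set $r:=\min(\delta/2,|x|/2)$ and for each $n$ choose a near-minimizing curve $\gamma_n\in\Gamma(x,x_n)$, so that $\int_{\gamma_n}\rho(|\cdot|)\,ds\to 0$. Because $\gamma_n(0)=x$ and $x_n\notin B(x,r)$, continuity of $\gamma_n$ yields a first exit time from $B(x,r)$; let $\gamma_n'$ denote the corresponding initial subarc. Then $\ell_d(\gamma_n')\ge r$, while along $\gamma_n'$ we have $|x|/2\le |z|\le 3|x|/2$, so by the lower-bound hypothesis $m:=\inf_{|x|/2\le s\le 3|x|/2}\rho(s)>0$. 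Consequently $\int_{\gamma_n}\rho(|\cdot|)\,ds\ge \int_{\gamma_n'}\rho(|\cdot|)\,ds\ge mr>0$, contradicting $d_\rho(x_n,x)\to 0$.

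The argument is essentially elementary; the one subtlety is the isolation of an initial subarc of $\gamma_n$ on which $\rho(|\cdot|)$ is bounded below, which requires continuity of $\gamma_n$ to capture its first exit from $B(x,r)$ together with the lower bound coming from the infimum hypothesis. The upper and lower bounds on $\rho$ are used exactly once each and play complementary roles in the two directions.
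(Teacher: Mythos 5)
Your proof is correct and takes essentially the same route as the paper: the forward direction is the same local Lipschitz estimate obtained from a quasiconvex curve that stays in an annulus around $b$, and your contradiction argument for the inverse rests on the same key estimate (a curve escaping $B(x,r)$ with $r\le |x|/2$ has $\rho$-weighted length at least $r\inf_{|x|/2\le s\le 3|x|/2}\rho(s)$), merely packaged sequentially instead of the paper's direct $\eps$--$\delta$ argument. The only point you leave implicit is that $d_\rho$ is in fact a metric under these hypotheses, which the paper settles at the outset by citing Lemma~\ref{lem-6}.
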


\begin{proof}
By Lemma \ref{lem-6} the function $d_{\rho}$ is a metric in $X$.
The identity map is trivially a bijection, so what we need to show is the continuity of the map and its inverse.

To prove the continuity let $x \in X$ and $\tilde\eps > 0$. We set
\begin{equation*}
\delta := \min \left\{ \frac{|x|}{2C} , \frac{\tilde\eps}{C \sup_{r \geq |x|/2} \rho(r)} \right\},
\end{equation*}
where $C$ is the quasiconvexity constant. Now if $d(x,y) < \delta$ and $\gamma$ is a quasiconvex curve connecting $x$ to $y$, then $\ell_d(\gamma) \leq C d(x,y) < C \delta \leq |x|/2$. Thus $|z| \geq |x|/2$ for every $z \in \gamma$ and therefore
\begin{equation*}
d_\rho(x,y)
\le\int_\gamma\rho(|\cdot|)ds
\le\sup_{r \geq |x|/2} \rho(r) \ell_d(\gamma)
\le C\sup_{r \geq |x|/2} \rho(r) d(x,y)
< \tilde\eps.
\end{equation*}

To prove the continuity of the inverse, fix $x\in X$ and $\tilde\eps>0$. We set $m=\inf_{\frac{|x|}{2} \leq r \leq 3\frac{|x|}{2}}\rho(r)$ and $\delta=m \min \{ |x|/2 , \tilde\eps \}$. If $y\in X$ such that $d_\rho(x,y)<\delta$, then there exists $\gamma\in \Gamma(x,y)$ for which $\int_\gamma\rho(|\cdot|)ds <\delta$. The curve $\gamma$ lies completely inside $B(x,\frac{|x|}{2})$, because otherwise
\[
\delta>\int_\gamma\rho(|\cdot|)ds\ge\int_{\gamma\cap B(x, \frac{|x|}{2})}\rho(|\cdot|)ds\ge m\cdot \frac{|x|}{2} \geq \delta.
\]
Then we obtain
\[
d(x,y)\le \frac{1}{m}m\, \ell_d(\gamma)\le\frac{1}{m}\int_\gamma\rho(|\cdot|)ds<\frac{1}{m}\delta\leq\tilde\eps
\]
and therefore we have the continuity.
\end{proof}

\begin{lem}
\label{lem-10}
Suppose that for every $\eps>0$ and for every $M \geq \eps$ we have that $\inf_{\eps \leq r \leq M} \rho(r) > 0$ and $\sup_{r \geq \eps} \rho(r) < \infty$. 
Let $\gamma \subset X$ be a curve that is rectifiable with respect to the metric $d_{\rho}$.
Let $0 < \delta \leq r$ and assume that $\gamma$ connects points $x , y \in X$ such that $\delta \leq |z| \leq r$ for every $z \in \gamma$. 
Then for every $R > r$ and for every $0 < \delta' < \delta$ we have
\begin{equation*}
\ell_{\rho}(\gamma)
\geq d(x,y) \inf_{\delta' \leq r' \leq R} \rho(r').
\end{equation*}
\end{lem}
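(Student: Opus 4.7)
The plan is to reduce the inequality to a local estimate, then upgrade to the global bound by a single fine-partition argument. Set $m := \inf_{\delta' \leq r' \leq R} \rho(r')$, which is strictly positive by hypothesis, and $\tau := \min\{\delta - \delta', R - r\} > 0$. The key claim (\emph{the local estimate}) is:
\[
(\ast)\qquad d_\rho(p,q) \;\geq\; m\, d(p,q) \qquad\text{for all } p,q \in \gamma \text{ with } d(p,q) < \tau.
\]
Granted $(\ast)$, I would choose a partition $s = t_0 < t_1 < \cdots < t_n = t$ of the parameter interval fine enough that $d(\gamma(t_{j-1}),\gamma(t_j)) < \tau$ for every $j$; such a partition exists because $\gamma\colon[s,t]\to X$ is a continuous map on a compact interval and hence uniformly continuous with respect to $d$. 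Summing $(\ast)$ over $j$ and applying the triangle inequality for $d$, together with the definition of $\ell_\rho$ as a supremum of partition sums, gives
\[
\ell_\rho(\gamma) \;\geq\; \sum_{j=1}^n d_\rho(\gamma(t_{j-1}),\gamma(t_j)) \;\geq\; m \sum_{j=1}^n d(\gamma(t_{j-1}),\gamma(t_j)) \;\geq\; m\, d(x,y),
\]
which is the desired bound.

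To prove $(\ast)$, I would fix an arbitrary $\eta \in \Gamma(p,q)$, parametrize it by $d$-arc-length on $[0,\ell_d(\eta)]$, and study the auxiliary function $f(u) := |\eta(u)| = d(b,\eta(u))$. This $f$ is $1$-Lipschitz and satisfies $f(0), f(\ell_d(\eta)) \in [\delta, r]$. If $f(u) \in [\delta', R]$ for every $u$, then $\rho(f(u)) \geq m$ pointwise, so $\int_\eta \rho(|\cdot|)\, ds \geq m\,\ell_d(\eta) \geq m\,d(p,q)$. Otherwise set $u_1 := \inf\{u : f(u) \notin [\delta', R]\}$; continuity of $f$ forces $f(u_1) \in \{\delta', R\}$, and the $1$-Lipschitz property together with $f(0) \in [\delta, r]$ forces $u_1 \geq |f(0) - f(u_1)| \geq \tau$. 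Since $f \in [\delta', R]$ on $[0,u_1]$,
\[
\int_\eta \rho(|\cdot|)\, ds \;\geq\; \int_0^{u_1} \rho(f(u))\, du \;\geq\; m\, u_1 \;\geq\; m\, \tau \;>\; m\, d(p,q).
\]
Taking the infimum over $\eta \in \Gamma(p,q)$ yields $(\ast)$.

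The main obstacle, and the only real subtlety, is handling the competitor curves $\eta$ that ``cheat'' by briefly leaving the annular region $\{z : \delta' \leq |z| \leq R\}$, where $\rho$ can be much smaller than $m$. The strict inequalities $\delta' < \delta$ and $R > r$ in the hypothesis are exactly what provides the quantitative buffer $\tau$: any such excursion forces $\eta$ to traverse a $d$-length of at least $\tau$ through the good annulus before it can escape, and under the fine-partition constraint $d(p,q) < \tau$ this is already more costly than just staying inside. Everything else is bookkeeping.
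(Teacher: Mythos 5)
Your proof is correct, and it takes a genuinely different route from the paper's. The paper partitions $\gamma$ by arc-length with respect to $d_\rho$ into pieces of $d_\rho$-length at most $\min\{\int_{\delta'}^{\delta}\rho(t)\,dt,\int_r^R\rho(t)\,dt\}$, chooses $\eps$-almost-optimal competitor curves $\beta_{i,\eps}$ between consecutive partition points, rules out excursions of these competitors outside the annulus $\{\delta'<|z|<R\}$ by the radial lower bound of Lemma~\ref{lem-2}, and then bounds $\sum_i d_\rho(x_{i-1},x_i)$ from below by $\ell_d(\beta_\eps)\inf\rho-n\eps$ before letting $\eps\to0$. You instead isolate a clean local comparison, $d_\rho(p,q)\ge m\,d(p,q)$ whenever $p,q\in\gamma$ and $d(p,q)<\tau:=\min\{\delta-\delta',R-r\}$, proved by a first-exit argument on each competitor (any escape from $[\delta',R]$ forces the competitor to spend $d$-length at least $\tau$ in the region where $\rho\ge m$), and then sum over a fine partition. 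Both arguments exploit the same buffer $\delta'<\delta$, $r<R$; yours quantifies it by $d$-length and the pointwise bound $\rho\ge m$, the paper's by the integrals $\int_{\delta'}^{\delta}\rho$ and $\int_r^R\rho$ via Lemma~\ref{lem-2}. What your version buys is the elimination of the concatenated curve $\beta_\eps$ and the $n\eps$ bookkeeping, and it does not use Lemma~\ref{lem-2} at all; the local estimate is also a statement of independent interest (a local lower Lipschitz bound of $d_\rho$ against $d$ on the annulus). One small point to patch: you invoke uniform continuity of $\gamma$ with respect to $d$, whereas the lemma only gives a curve rectifiable with respect to $d_\rho$ (and quasiconvexity of $(X,d)$ is not assumed here). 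Either justify this by noting that the identity map $(X,d_\rho)\to(X,d)$ is continuous under the stated hypotheses on $\rho$ --- this is exactly the second half of the proof of Proposition~\ref{homeo}, which does not use quasiconvexity --- or bypass it by taking the partition fine in the $d_\rho$-sense and observing, by the same exit argument, that $d_\rho(p,q)<m\tau$ already forces $d(p,q)<\tau$.
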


\begin{proof}
By Lemma~\ref{lem-6}, the function $d_{\rho}$ is a metric in $X$.
Let 
$\gamma(t)$ be the parametrization of $\gamma$ by arc-length with respect to the metric $d_{\rho}$. We divide $\gamma$ into a partition  
$0 = t_0 < t_1 < t_2 < ... < t_n = \ell_{\rho}(\gamma)$ such that 
\[
t_i - t_{i-1} \leq \min \left\{ \int_{\delta'}^{\delta} \rho(t) dt , \int_r^R \rho(t) dt \right\}
\] 
for every $1 \leq i \leq n$.
Let $0 < \eps <\min \left\{ \int_{\delta'}^{\delta} \rho(t) dt , \int_r^R \rho(t) dt \right\}$ and set $x_i := \gamma(t_i)$ for each $i=0,\cdots, n$. 
Then for every $i$ there exists a curve $\beta_{i,\eps}$ connecting the points $x_{i-1}$ and $x_i$ such that
\begin{align*}
\int_{\beta_{i,\eps}} \rho(|\cdot|)ds
\leq d_{\rho}(x_{i-1} , x_i) + \eps
&< t_i - t_{i-1} + \min \left\{ \int_{\delta'}^{\delta} \rho(t) dt , \int_r^R \rho(t) dt \right\}
\\
&\leq 2 \min \left\{ \int_{\delta'}^{\delta} \rho(t) dt , \int_r^R \rho(t) dt \right\}.
\end{align*}
Since $|x_i|,|x_{i-1}|\le r$ by the assumption $|z|\le r$ for every 
$z\in\gamma$, it follows that if 
$|z| \geq R$ for some $z \in \beta_{i,\eps}$,  
then by Lemma \ref{lem-2},
\begin{equation*}
\int_{\beta_{i,\eps}} \rho(|\cdot|)ds
\geq d_{\rho}(x_{i-1} , z) + d_{\rho}(z , x_i)
\geq \int_{|x_{i-1}|}^{|z|} \rho(t) dt + \int_{|x_i|}^{|z|} \rho(t) dt
\geq 2 \int_r^R \rho(t) dt,
\end{equation*}
which is a contradiction. Therefore $|z| < R$ for every $z \in \beta_{i,\eps}$.
Similarly if $|z| \leq \delta'$ for some $z \in \beta_{i,\eps}$, then by Lemma \ref{lem-2} and the assumption that $|z|\ge \delta$ for every $z\in\gamma$,
\begin{equation*}
\int_{\beta_{i,\eps}} \rho(|\cdot|)ds
\geq d_{\rho}(x_{i-1} , z) + d_{\rho}(z , x_i)
\geq \int_{|z|}^{|x_{i-1}|} \rho(t) dt + \int_{|z|}^{|x_i|} \rho(t) dt
\geq 2 \int_{\delta'}^{\delta} \rho(t) dt,
\end{equation*}
which is a contradiction. Therefore $|z| > \delta'$ for every $z \in \beta_{i,\eps}$.

Let $\beta_{\eps}$ be the union of the curves $\beta_{i,\eps}$. Then
\begin{align*}
\ell_{\rho}(\gamma)
&\geq \sum_{i=1}^n d_{\rho}(x_{i-1} , x_i) 
\geq \sum_{i=1}^n \left(\int_{\beta_{i,\eps}} \rho(|\cdot|)ds - \eps\right)
= \int_{\beta_{\eps}} \rho(|\cdot|)ds - n \eps
\\
&\geq \int_{\beta_{\eps}} \inf_{z \in \beta_{\eps}} \rho(|z|) ds - n \eps
\geq \ell_d(\beta_{\eps}) \inf_{\delta' < r' < R} \rho(r') - n \eps
\\
&\geq d(x,y) \inf_{\delta' < r' < R} \rho(r') - n \eps.
\end{align*}
By letting $\eps \to 0$ we get the result.
\end{proof}

Should $\rho$ be lower semicontinuous, we can take $R=r$ and $\delta' = \delta$ in the above lemma, see Lemma \ref{lem:lsc} below.

\begin{prop}
\label{rectifiability}
Suppose that $(X,d)$ is quasiconvex and that for every $\eps > 0$ and for every $M \geq \eps$ we have $\inf_{\eps \leq r \leq M} \rho(r) > 0$ and $\sup_{r \geq \eps} \rho(r) < \infty$. 
Then a curve $\gamma\colon[s,t]\to X$ is rectifiable with respect to the metric $d$ if and only if it is rectifiable with respect to the metric $d_\rho$.
\end{prop}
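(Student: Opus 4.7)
The plan is to handle the two directions separately, with the forward direction being essentially immediate from~\eqref{rholength} and the reverse direction relying on Lemma~\ref{lem-10} together with the fact, provided by Proposition~\ref{homeo}, that the two metrics induce the same topology on $X$.

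For the forward implication, suppose $\gamma\colon[s,t]\to X$ is rectifiable with respect to $d$. Then $\gamma([s,t])$ is a compact subset of $X$, so the continuous function $z\mapsto |z|$ attains its minimum $\delta>0$ on this set (positivity comes from $b\in\partial X$, so $b\notin X$) and also attains a finite maximum. By the hypothesis $\sup_{r\ge\delta}\rho(r)<\infty$, and so by~\eqref{rholength},
\[
\ell_\rho(\gamma)\le\int_\gamma\rho(|\cdot|)\,ds\le \sup_{r\ge\delta}\rho(r)\cdot\ell_d(\gamma)<\infty.
\]

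For the reverse implication, assume $\gamma$ is rectifiable with respect to $d_\rho$. By Proposition~\ref{homeo}, $(X,d)$ and $(X,d_\rho)$ are homeomorphic, so $\gamma([s,t])$ is a compact subset of $(X,d)$ as well. Hence there exist $0<\delta\le r$ such that $\delta\le|z|\le r$ for every $z\in\gamma$. Fix any $0<\delta'<\delta$ and any $R>r$; by hypothesis, $m:=\inf_{\delta'\le r'\le R}\rho(r')>0$. For an arbitrary partition $s=t_0<t_1<\cdots<t_n=t$, each subcurve $\gamma|_{[t_{i-1},t_i]}$ is rectifiable with respect to $d_\rho$ (its $\ell_\rho$ is bounded by $\ell_\rho(\gamma)$), stays in the same annular region, and hence Lemma~\ref{lem-10} yields
\[
\ell_\rho(\gamma|_{[t_{i-1},t_i]})\ge m\, d(\gamma(t_{i-1}),\gamma(t_i)).
\]
Summing over $i$ and using additivity of $\ell_\rho$ on subcurves gives
\[
\sum_{i=1}^n d(\gamma(t_{i-1}),\gamma(t_i))\le \frac{1}{m}\sum_{i=1}^n \ell_\rho(\gamma|_{[t_{i-1},t_i]})=\frac{\ell_\rho(\gamma)}{m}.
\]
Taking the supremum over all partitions yields $\ell_d(\gamma)\le \ell_\rho(\gamma)/m<\infty$.

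The substantive step is the reverse direction, and within it the main obstacle is producing a uniform lower bound on $\rho$ along $\gamma$ so that one can compare the $d$-length of each partition subcurve to its $d_\rho$-length. This is exactly what Lemma~\ref{lem-10} supplies, once compactness of $\gamma([s,t])$ in $(X,d)$---delivered by Proposition~\ref{homeo}---has been used to bound $|z|$ away from $0$ and from $\infty$ along the curve.
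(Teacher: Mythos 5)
Your proposal is correct and follows essentially the same route as the paper: the forward direction via compactness of the trajectory, the positivity of $\inf|z|$ and \eqref{rholength}, and the reverse direction via Proposition~\ref{homeo} plus Lemma~\ref{lem-10} applied to the subcurves of an arbitrary partition (the paper simply makes the specific choices $\delta'=\delta/2$ and $R=r+1$). No gaps.
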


\begin{proof}
Notice that because $(X,d)$ and $(X,d_\rho)$ are homeomorphic by Proposition~\ref{homeo}, our map $\gamma$ is continuous in both spaces and thus a curve in both spaces.
Let $\delta := \min_{z \in \gamma} |z|$, which is positive, because $\gamma$ is a compact subset of $X$.
Let $M=\sup_{r \geq \delta} \rho(r) < \infty$.
Then, by \eqref{rholength},
\[
\ell_{\rho}(\gamma)
\leq \int_{\gamma} \rho(|\cdot|) ds
\leq M \ell_d(\gamma).
\]
Thus if $\gamma$ is rectifiable with respect to the metric $d$, then it is also rectifiable with respect to the metric $d_\rho$.

To see the other direction, suppose that $\gamma$ is rectifiable with respect to the metric $d_\rho$. Note that since $\gamma\colon[s,t]\to X$ is continuous also with respect to $d$, the trajectory of $\gamma$ is compact and so there is some $r>0$ such that $|z| \leq r$ for every $z \in \gamma$.
Define $m=\inf_{\delta / 2 \leq r' \le r+1} \rho(r')$. Let $s=t_0<t_1<\cdots<t_n=t$ be a partition of the interval $[s,t]$. Then by Lemma~\ref{lem-10}
\[
\sum_{j=1}^n d(\gamma(t_{j-1}),\gamma(t_j))\le\frac{1}{m}\sum_{j=1}^n \ell_\rho(\gamma|_{[t_{j-1},t_j]})=\frac{1}{m}\ell_\rho(\gamma)
\]
and by taking supremum over all partitions we obtain $\ell_d(\gamma)\le\frac{1}{m}\ell_\rho(\gamma)$ and thus $\gamma$ is rectifiable with respect to the metric $d$ also.
\end{proof}

\subsection{Existence of uniform curves to the boundary}
\label{subsect-extension}

The notion of uniformity guarantees the existence of uniform curves connecting pairs of points in the space. In this subsection we extend this existence to points on the boundary of the space as well. While this is a folklore that is well-known to the experts, we provide the detailed proof for the convenience of the reader. 

\begin{thm}
\label{thm-extension}
Let $(X,d)$ be a $C_U$-uniform space. 
Then any two points $x , y \in \overline X^d$ can be connected by a $C_U$-uniform curve in $(X,d)$.
\end{thm}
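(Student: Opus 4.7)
The plan is to dispose of the case $x, y \in X$ directly from the definition of uniform space, and to treat boundary endpoints by an approximation-and-compactness argument. The critical input is the properness of $(\overline{X}^d, d)$ supplied by \cite[Proposition 2.20]{BHK} (cited in the paragraph preceding the theorem), which will make the sequence of approximating uniform curves uniformly relatively compact.

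Suppose that at least one of $x, y$ lies in $\bdy X$. Choose sequences $x_n, y_n \in X$ with $x_n \to x$ and $y_n \to y$ in $d$ (use a constant sequence for whichever endpoint is already interior). By uniformity of $(X, d)$, for each $n$ there is a $C_U$-uniform curve $\gamma_n$ in $X$ joining $x_n$ to $y_n$. The length bound $\ell_d(\gamma_n) \le C_U d(x_n, y_n)$ shows that the lengths are eventually bounded by some $L < \infty$. I parametrize each $\gamma_n \colon [0,1] \to \overline{X}^d$ with constant speed $\ell_d(\gamma_n)$, so that $\gamma_n$ is $L$-Lipschitz for all large $n$, and the trajectories all lie in the compact set $\{ z \in \overline{X}^d : d(x, z) \le L + 1 \}$. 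Arzel\`a--Ascoli then provides a subsequence, still denoted $(\gamma_n)$, converging uniformly on $[0,1]$ to a Lipschitz curve $\gamma \colon [0,1] \to \overline{X}^d$; after passing to a further subsequence I may assume $\ell_d(\gamma_n) \to \ell$ for some $\ell \in [0, L]$. By construction $\gamma(0) = x$ and $\gamma(1) = y$.

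It remains to verify the two uniform-curve conditions for $\gamma$. Using lower semicontinuity of length under uniform convergence,
\begin{equation*}
\ell_d(\gamma) \le \liminf_n \ell_d(\gamma_n) = \ell \le C_U \lim_n d(x_n, y_n) = C_U d(x, y),
\end{equation*}
which is condition~(i). For condition~(ii), fix $t \in [0,1]$, set $z := \gamma(t)$ and $z_n := \gamma_n(t)$, and observe that $z_n \to z$ in $d$. The constant-speed parametrization gives $\ell_d(\gamma_n|_{[0,t]}) = t\, \ell_d(\gamma_n)$ and $\ell_d(\gamma_n|_{[t,1]}) = (1-t)\, \ell_d(\gamma_n)$, so the twisted cone inequality for $\gamma_n$ reads
\begin{equation*}
\min\{t, 1-t\}\, \ell_d(\gamma_n) \le C_U d_X(z_n).
\end{equation*}
Since $d_X$ is $1$-Lipschitz with respect to $d$, passing to the limit yields $\min\{t, 1-t\}\, \ell \le C_U d_X(z)$. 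Combining with the lower semicontinuity estimates $\ell_d(\gamma|_{[0,t]}) \le t \ell$ and $\ell_d(\gamma|_{[t,1]}) \le (1-t) \ell$ gives $\min\{\ell_d(\gamma|_{[0,t]}), \ell_d(\gamma|_{[t,1]})\} \le C_U d_X(z)$, which is condition~(ii).

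I expect the principal obstacle to be precisely this limit passage in the twisted cone condition: all the curves must be parametrized on a common interval with synchronized speeds so that the two subarc lengths at parameter $t$ depend on $n$ only through the single quantity $\ell_d(\gamma_n)$. A naive arc-length parametrization on the varying intervals $[0, \ell_d(\gamma_n)]$ would leave the subarcs out of sync across $n$ and obstruct a clean limit. Minor additional care handles the degenerate case $x = y \in \bdy X$ (take the constant curve) and non-degeneracy otherwise (which follows from $\ell_d(\gamma) \ge d(x, y) > 0$).
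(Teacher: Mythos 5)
Your proof is correct, and it follows the same high-level strategy as the paper (Arzel\`a--Ascoli via properness of $\overline X^d$ combined with lower semicontinuity of length), but the technical execution differs in two genuine ways that are worth noting.

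First, the parametrization. The paper parametrizes each $\gamma_i$ by arc-length on $[0,\ell_d(\gamma_i)]$ and then \emph{extends} the domain to a common interval $[0,L]$ by holding the curve constant equal to $y_i$ on the tail $[\ell_d(\gamma_i),L]$. This forces the twisted cone verification to track the quantity $\min\{t,\liminf_k\ell_d(\gamma_{i_k})\}$ paired with $(\liminf_k\ell_d(\gamma_{i_k})-t)_+$ and to split into cases depending on whether $t$ falls in the extended tail. Your constant-speed parametrization on the fixed interval $[0,1]$ sidesteps this entirely: the two subarc lengths at parameter $t$ are always $t\,\ell_d(\gamma_n)$ and $(1-t)\,\ell_d(\gamma_n)$, so after extracting a subsequence with $\ell_d(\gamma_n)\to\ell$, the twisted cone bound passes to the limit in one line via $1$-Lipschitzness of $d_X$ and lower semicontinuity of length on both subarcs. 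This is the ``synchronization'' you flagged as the crux, and it does make the argument cleaner.

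Second, the handling of the case $x,y\in\partial X$. The paper first proves the result for $x\in X$, $y\in\partial X$ and then iterates the whole argument with a second approximating sequence. You approximate both endpoints simultaneously, which removes a layer. There is no loss of rigor here: the same Arzel\`a--Ascoli and lower semicontinuity machinery applies unchanged.

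All the intermediate steps you rely on are sound: the trajectories are eventually uniformly $L$-Lipschitz with $L=C_U(d(x,y)+1)$ and stay in a closed bounded (hence compact) subset of $\overline X^d$; lower semicontinuity of arc-length under uniform convergence yields both quasiconvexity and the two subarc estimates; and the inference $\min\{a,b\}\leq\min\{A,B\}$ from $a\leq A$, $b\leq B$ closes the twisted cone condition. The degenerate case $x=y$ (constant curve) and the non-degeneracy otherwise are disposed of correctly.
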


\begin{proof}
If $x , y \in X$, then the claim is trivial.
Suppose that $x \in X$ and $y\in \partial X$. Let us fix a sequence of points $y_i\in X$ such that $\lim_{i\to\infty}d(y,y_i)=0$.
We may assume without loss of generality that $d(y,y_i)\le 1$ for each $i$.

For each $y_i$ there exists a $C_U$-uniform curve $\gamma_i$ that connects $x$ to $y_i$.
We have that $\gamma_i : [0,\ell_d(\gamma_i)] \rightarrow X$ such that $\gamma_i(0) = x$ and $\gamma_i(\ell_d(\gamma_i)) = y_i$.
From the quasiconvexity condition of $\gamma_i$ we get $\ell_d(\gamma_i) \leq C_U d(x,y_i) \leq C_U (d(x,y) + d(y,y_i)) \leq C_U(d(x,y)+1) := L$. Therefore we choose to extend the domain of $\gamma_i$ so that $\gamma_i : [0,L] \rightarrow X$ such that $\gamma_i(t) = y_i$ for each $t$ with $\ell_d(\gamma_i) \leq t \leq L$.
Note that as $\gamma_i$ is parametrized by arc-length with respect to the metric $d$, this new curve is $1$-Lipschitz as a map from $[0,L]$. 
Also by the twisted cone condition on $\gamma_i$, we have
\begin{equation*}
\min\{t,\ell_d(\gamma_i)-t\}
\leq C_U \inf_{z \in \partial X} d(\gamma_i(t),z)
\end{equation*}
for every $0 \leq t \leq \ell_d(\gamma_i)$. Indeed this also holds, if $\ell_d(\gamma_i) < t \leq L$, because then the left-hand side of the above inequality is negative.

Thus we have a sequence of $1$-Lipschitz functions $\gamma_i : [0,L] \rightarrow X$, and so this sequence is equicontinuous. 
This sequence is bounded, because $d(x,\gamma_i(t)) \leq \ell_d({\gamma_i}_{[0,t]}) \leq \ell_d(\gamma_i) \leq L$ for every $i$ and for every $0 \leq t \leq L$.
By the Arzel\`{a}-Ascoli theorem, we conclude that there exists a subsequence $\gamma_{i_k}$ that converges uniformly to a function $\gamma:[0,L]\to\overline{X}^d$.
Here we also used the fact that $(\overline{X}^d,d)$ is proper.
We claim that $\gamma$ is a $C_U$-uniform curve in $(X,d)$ and connects $x$ to $y$.

As a uniform limit of a sequence of $1$-Lipschitz maps, $\gamma$ is necessarily $1$-Lipschitz. It is also clear that $\gamma(0)=x$. Note that 
\begin{equation*}
0 = \lim_{k \to \infty} d(\gamma_{i_k}(L),\gamma(L))
= \lim_{k \to \infty} d(y_{i_k},\gamma(L))
= d(y,\gamma(L)).
\end{equation*}
Thus $\gamma$ is a curve that connects $x$ to $y$.

To prove the quasiconvexity, let $0 = t_0 < t_1 < ... < t_n = L$ be a partition of the interval $[0,L]$. 
Because of the uniform convergence, for any $\eps > 0$ there exists $K \in \mathbb{N}$ such that $\sup_{t \in [0,L]} d(\gamma_{i_k}(t),\gamma(t)) < \eps$ whenever $k \geq K$. For such $k$ we have
\begin{equation}
\begin{split}
\label{length of gamma}
&\sum_{j=1}^n d(\gamma(t_{j-1}) , \gamma(t_j))
\\
&\leq \sum_{j=1}^n d(\gamma(t_{j-1}) , \gamma_{i_k}(t_{j-1})) + d(\gamma_{i_k}(t_{j-1}) , \gamma_{i_k}(t_j)) + d(\gamma_{i_k}(t_j) , \gamma(t_j))
\\
&\leq 2 n \eps + \ell_d(\gamma_{i_k})
\leq 2 n \eps + C_Ud(x,y_{i_k}).
\end{split}
\end{equation}
Then we get $\sum_{j=1}^n d(\gamma(t_{j-1}) , \gamma(t_j)) \leq C_U d(x,y)$ by first letting $k \to \infty$ and then $\eps \to 0$. 
Finally we get quasiconvexity by taking the supremum over all partitions of the interval.

To prove the twisted cone condition of uniform curves, we fix $t \in [0,L]$. We want to show that
\begin{equation*}
\min \{ \ell_d(\gamma\vert_{[0,t]}) , \ell_d(\gamma\vert_{[t,L]}) \}
\leq C_U \inf_{z \in \partial X} d(\gamma(t),z).
\end{equation*}
With $\eps>0$ and $K$ as above, let $0 = t_0 < t_1 < ... < t_n = t$ be a partition of the interval $[0,t]$ and let $t = s_0 < s_1 < ... < s_m = L$ be a partition of the interval $[t,L]$. For $k \geq K$, we get as in~\eqref{length of gamma}
\begin{equation*}
\sum_{j=1}^n d(\gamma(t_{j-1}) , \gamma(t_j))
\leq 2 n \eps + \ell_d({\gamma_{i_k}}\vert_{[0,t]})
= 2 n \eps + \min \{t,\ell_d(\gamma_{i_k})\}
\end{equation*}
and similarly
\begin{equation*}
\sum_{j=1}^m d(\gamma(s_{j-1}) , \gamma(s_j))
\leq 2 m \eps + \ell_d({\gamma_{i_k}}\vert_{[t,L]})
= 2 m \eps + (\ell_d(\gamma_{i_k}) - t)_+.
\end{equation*}
By letting $k \to \infty$ first and then $\eps \to 0$ and then taking the supremum over all partitions of the intervals, we get
\begin{equation*}
\min \{ \ell_d(\gamma\vert_{[0,t]}) , \ell_d(\gamma\vert_{[t,L]}) \}
\leq \min\{ \min \{t , \liminf_{k \to \infty} \ell_d(\gamma_{i_k}) \} , ( \liminf_{k \to \infty} \ell_d(\gamma_{i_k}) - t )_+ \}.
\end{equation*}
If $t > \liminf_{k \to \infty} \ell_d(\gamma_{i_k})$, then this is zero and the result clearly holds. Now assume that $0 \leq t \leq \liminf_{k \to \infty} \ell_d(\gamma_{i_k})$. Then
\begin{align*}
\min \{ \ell_d(\gamma\vert_{[0,t]}) , \ell_d(\gamma\vert_{[t,L]}) \}
&\leq \liminf_{k \to \infty} \min \{ t , \ell_d(\gamma_{i_k}) - t \}
\\
&\leq \liminf_{k \to \infty} C_U \inf_{z \in \partial X} d(\gamma_{i_k}(t),z)
= C_U \inf_{z \in \partial X} d(\gamma(t),z).
\end{align*}
This completes the proof for $x\in X$ and $y\in \partial X$.

If both $x,y\in \partial X$, then we obtain a sequence $(x_i)_i$ from $X$ converging to $x$, and for each $i$ we connect $x_i$ to $y$ via a $C_U$-uniform curve in $(X,d)$ and repeat the above proof to this sequence of $C_U$-uniform curves.
Thus we obtain a $C_U$-uniform curve in $(X,d)$ that connects $x$ to $y$, completing the proof.
\end{proof}

\subsection{Basic properties of the sphericalized space}
\label{subs:basic} 

Since we are interested in transformations that make $(X,d_\rho)$  bounded, it is to be expected that the completion of $X$ with respect to $d_\rho$ would result in new points that were not part of the completion of $X$ with respect to $d$; such points should come from sequences that tend to infinity. The goal of this subsection is to demonstrate that when $(X,d)$ is uniform and $\rho$ is quasidecreasing, there is exactly one such point, see Theorem~\ref{thm-9}.

\begin{lem}
\label{lem-5}
Let $x,y \in X$ such that $|x| \leq |y|$ and assume that $\gamma$ is a $C_U$-uniform curve in $(X,d)$ connecting $x$ to $y$. Then
\begin{equation*}
\frac{|x|}{1+C_U}
\leq |z|
\leq (1+C_U)|x| + C_U |y|
\end{equation*}
for every $z \in \gamma$.
\end{lem}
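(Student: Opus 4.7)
The strategy is to use both defining properties of a $C_U$-uniform curve (quasiconvexity and the twisted cone condition) together with the triangle inequality applied with the base point $b \in \partial X$, exploiting that $d_X(z) = \dist_d(z, \partial X) \leq d(z,b) = |z|$ whenever $z \in X$.

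\noindent\textbf{Upper bound.} For the estimate $|z| \leq (1+C_U)|x| + C_U|y|$, I would apply the triangle inequality through the point $x$ and then use quasiconvexity. Explicitly, for any $z \in \gamma$,
\[
|z| \leq |x| + d(x,z) \leq |x| + \ell_d(\gamma_{xz}) \leq |x| + \ell_d(\gamma) \leq |x| + C_U\, d(x,y),
\]
and then $d(x,y) \leq |x| + |y|$ yields the claimed bound. This part is entirely routine.

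\noindent\textbf{Lower bound.} For $|z| \geq |x|/(1+C_U)$, the key observation is that $b \in \partial X$ gives $d_X(z) \leq |z|$, so the twisted cone condition reads
\[
\min\{\ell_d(\gamma_{xz}), \ell_d(\gamma_{zy})\} \leq C_U |z|.
\]
I would then split into the two cases of which term achieves the minimum. If $\ell_d(\gamma_{xz}) \leq C_U|z|$, then $d(x,z) \leq C_U|z|$ and
\[
|x| \leq |z| + d(x,z) \leq (1+C_U)|z|.
\]
If instead $\ell_d(\gamma_{zy}) \leq C_U|z|$, then the same triangle inequality applied to $y$ gives $|y| \leq (1+C_U)|z|$, and because $|x| \leq |y|$ by hypothesis, again $|x| \leq (1+C_U)|z|$. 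Both cases produce the desired lower bound.

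\noindent\textbf{Main obstacle.} There is essentially no obstacle; the only subtlety is remembering that the hypothesis $|x| \leq |y|$ is used precisely to handle the case where the twisted cone condition controls the $z$-to-$y$ piece of $\gamma$ rather than the $x$-to-$z$ piece. The lemma should therefore follow in a few lines.
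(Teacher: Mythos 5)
Your proof is correct and follows essentially the same route as the paper: the upper bound via the triangle inequality and quasiconvexity, and the lower bound via $d_X(z)\le |z|$ (since $b\in\partial X$) combined with the twisted cone condition, using $|x|\le |y|$ to handle the case where the $z$-to-$y$ piece is the shorter one. The paper merely writes the case split compactly as $\min\{|x|-|z|,\,|y|-|z|\}=|x|-|z|$, so there is no substantive difference.
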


\begin{proof}
Since $d(x,z)\ge |x|-|z|$ and $d(y,z)\ge |y|-|z|$, we see that
\begin{align*}
(1+C_U)|z|
&\geq |z| + C_U d_X(z)
\geq |z| + \min \{ \ell_d(\gamma_{xz}) , \ell_d(\gamma_{zy}) \}
\\
&\geq |z| + \min \{ d(x,z) , d(z,y) \}
\geq |z| + \min \{ |x| - |z| , |y| - |z| \}
= |x|.
\end{align*}
Thus the first inequality holds.
The second inequality holds because by the quasiconvexity of $\gamma$,
\begin{equation*}
|z|
\leq |x| + d(x,z)
\leq |x| + \ell_d(\gamma_{xz})
\leq |x| + C_U\, d(x,y)
\leq (1+C_U)\, |x| + C_U |y|.
\qedhere
\end{equation*}
\end{proof}

In the following theorem, we show that in completing $X$ with respect to the metric $d_\rho$, we have exactly one more element more than in $\overline{X}^d$; this extra point will be denoted by $\infty$ to reflect the fact that this point is the limit of any sequence $(x_i)_{i=1}^{\infty}$, where $\lim_{i \to \infty} |x_i| = \infty$.
The next theorem forms part of Theorem \ref{maintheorem}(a).

\begin{thm}
\label{thm-9}
Let $(X,d)$ be a uniform space and assume that $\rho$ is quasidecreasing and bounded.
Then the space $(X,d_{\rho})$ is bounded, the set $\overline{X}^d$ is a subset of $\overline{X}^{d_\rho}$, and $\overline{X}^{d_\rho}\setminus \overline{X}^d$ contains exactly one point.
\end{thm}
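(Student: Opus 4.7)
The plan is to prove the three claims in turn: boundedness of $(X,d_\rho)$, the inclusion $\overline{X}^d\subset\overline{X}^{d_\rho}$, and uniqueness of the additional point.

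For boundedness, I would fix a basepoint $x_0\in X$ and show $d_\rho(x_0,x)\le C$ for all $x\in X$, with $C$ depending only on $x_0$, $\rho$ and $C_U$. For $x$ with $|x|\le 2|x_0|$, a $C_U$-uniform curve from $x_0$ to $x$ (supplied by Theorem~\ref{thm-extension}) has $d$-length at most $3C_U|x_0|$, so boundedness of $\rho$ by some $M$ gives $d_\rho(x_0,x)\le 3MC_U|x_0|$. For $|x|>2|x_0|$, I would apply the intermediate value theorem to $|\cdot|$ along the uniform curve from $x_0$ to $x$ to pick points $z_0=x_0,z_1,\ldots,z_n$ with $|z_k|=2^k|x_0|$, setting $z_{n+1}=x$. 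Joining consecutive $z_k,z_{k+1}$ by new uniform curves, Lemma~\ref{lem-5} (so $|y|\ge 2^k|x_0|/(1+C_U)$ on the $k$-th curve) and quasidecrease of $\rho$ yield $d_\rho(z_k,z_{k+1})\le 3C_U|x_0|\,\cqd\,2^k\rho(2^k|x_0|/(1+C_U))$. Writing $a=|x_0|/(1+C_U)$ and using quasidecrease to get $2^k\rho(2^ka)\le (2\cqd/a)\int_{2^{k-1}a}^{2^ka}\rho(t)\,dt$, the series telescopes to a bound proportional to $\rho(a)+\int_{a/2}^\infty\rho(t)\,dt$, independent of $x$ and $n$.

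For $\overline{X}^d\subset\overline{X}^{d_\rho}$, I would define $\iota\colon\overline{X}^d\to\overline{X}^{d_\rho}$ to be the identity on $X$, and for $y\in\partial X$ set $\iota(y)=\lim_{i\to\infty}y_i$ (taken in $d_\rho$) for any sequence $y_i\to y$ in $d$. This is well defined because a uniform curve between $y_i$ and $y_j$ has $d$-length $\le C_U d(y_i,y_j)\to 0$, and boundedness of $\rho$ gives $d_\rho(y_i,y_j)\le MC_U d(y_i,y_j)\to 0$; applying the same bound to two representing sequences shows $\iota$ is independent of the choice. For injectivity take $y\ne y'$ in $\overline X^d$ with representatives $y_i\to y$, $y'_i\to y'$ in $d$. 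If $|y|\ne|y'|$, Lemma~\ref{lem-2} bounds $d_\rho(y_i,y'_i)$ from below by $\int_{\min(|y_i|,|y'_i|)}^{\max(|y_i|,|y'_i|)}\rho\,dt$, which has a positive limit. If $|y|=|y'|$, then both norms are nonzero (only $b$ has norm $0$), and for $\delta<\min(|y|/2,d(y,y')/2)$ the ball $B(y,\delta)\subset\overline X^d$ excludes $y'$ and lies in $\{|z|\in[|y|/2,3|y|/2]\}$; on this annulus quasidecrease gives $\rho\ge\rho(3|y|/2)/\cqd=:m>0$. Any curve from $y_i\in B(y,\delta/2)$ to $y'_i\notin B(y,\delta)$ has an initial subcurve of $d$-length at least $\delta/2$ inside $\overline{B(y,\delta)}$, contributing at least $m\delta/2$ to its $\rho$-integral; thus $d_\rho(y_i,y'_i)\ge m\delta/2$, contradicting $d_\rho(y_i,y'_i)\to 0$.

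For uniqueness of the new point, I would first produce it. Taking $(z_k)\subset X$ with $|z_k|=2^k|x_0|$ as above, the convergent-series bound gives $\sum_k d_\rho(z_k,z_{k+1})<\infty$, so $(z_k)$ is $d_\rho$-Cauchy with a limit $\infty\in\overline{X}^{d_\rho}$. Since $d(z_k,z_{k+1})\ge|z_{k+1}|-|z_k|=2^k|x_0|\not\to 0$, the sequence is not $d$-Cauchy, so $\infty\notin\iota(\overline X^d)$. For uniqueness, let $(x_n)\subset X$ be any $d_\rho$-Cauchy sequence whose limit is not in $\iota(\overline X^d)$. If $(|x_n|)$ were bounded, properness of $(\overline X^d,d)$ would give a subsequence $d$-convergent to some $y\in\overline X^d$, forcing the entire $d_\rho$-Cauchy sequence to converge to $\iota(y)$, a contradiction. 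So $|x_{n_k}|\to\infty$ along a subsequence, and between $(x_{n_k})$ and $(z_m)$ the chain-of-doubling-norms estimate gives
\[
d_\rho(x_{n_k},z_m)\le C\int_{\min(|x_{n_k}|,|z_m|)/(2(1+C_U))}^\infty\rho(t)\,dt\to 0,
\]
so every such Cauchy sequence converges to the same $\infty$. The principal obstacle is securing bounds independent of $x$ in the boundedness step, handled by comparing $\sum 2^k\rho(2^ka)$ to a tail of $\int_0^\infty\rho$ via quasidecrease; a secondary subtlety is the injectivity case $|y|=|y'|$, where Lemma~\ref{lem-2} fails and one must instead combine positivity of $\rho$ on a compact annulus around $y$ with the topological separation of $y$ from $y'$ in $\overline X^d$.
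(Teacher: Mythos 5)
Correct, and essentially the paper's own argument: dyadic chaining through points with $|z_k|=2^k|x_0|$, Lemma~\ref{lem-5} together with quasidecreasingness to telescope against the finite integral $\int_0^\infty\rho(t)\,dt$, the bound $d_\rho\le(\sup\rho)\,C_U\,d$ to embed $\overline{X}^d$, and Lemma~\ref{lem-2} to separate bounded-norm from unbounded-norm sequences (your use of properness of $(\overline{X}^d,d)$ in the uniqueness step and your explicit injectivity argument are only minor organizational variants of the paper's case analysis). One assertion deserves a line more: the fact that $(z_k)$ is not $d$-Cauchy does not by itself yield $\infty\notin\iota(\overline{X}^d)$ when the candidate preimage $y$ lies in $\partial X$, but your own tools close this immediately -- if $\infty=\iota(y)$, pick $y_i\to y$ in $d$, so $N:=\sup_i|y_i|<\infty$ and $y_i\to\infty$ in $d_\rho$, while Lemma~\ref{lem-2} gives $d_\rho(z_k,y_i)\ge\int_N^{2N}\rho(t)\,dt>0$ once $|z_k|\ge 2N$, a contradiction.
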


\begin{proof}
First let $x , y \in X$ such that $|x| \leq |y| \leq 2|x|$. We connect $x$ to $y$ with a uniform curve $\gamma$ and thus we get from Lemma~\ref{lem-5} and quasidecreasingness of $\rho$ that
\begin{align*}
d_{\rho}(x,y)
&\leq \int_{\gamma} \rho(|\cdot|)ds
\leq \sup_{t \geq \frac{|x|}{1+C_U}}\rho(t) \ell_d(\gamma)
\leq \sup_{t \geq \frac{|x|}{1+C_U}}\rho(t) C_U d(x,y)
\\
&\leq 6 C_U (1+C_U) \sup_{t \geq \frac{|x|}{1+C_U}}\rho(t) \frac{|x|}{2(1+C_U)}
\leq 6\cqd C_U (1+C_U) \int_{\frac{|x|}{2(1+C_U)}}^{\frac{|x|}{1+C_U}} \rho(t) dt.
\end{align*}

Now let $x , y \in X$ such that $|x| \leq |y|$. Let $k$ be the positive integer such that $2^{k-1} |x| \leq |y| < 2^k |x|$. Then for every $1 \leq i \leq k-1$ let $x_i \in X$ such that $|x_i| = 2^i |x|$. Define also $x_0=x$ and $x_k=y$. Then from the above result and triangle inequality we get
\begin{align*}
d_{\rho}(x,y)
&\leq \sum_{i=1}^k d_{\rho}(x_{i-1},x_i)
\leq 6 \cqd C_U (1+C_U) \sum_{i=1}^k \int_{\frac{|x_{i-1}|}{2(1+C_U)}}^{\frac{|x_{i-1}|}{1+C_U}} \rho(t) dt
\\
&= 6 \cqd C_U (1+C_U) \int_{\frac{|x|}{2(1+C_U)}}^{\frac{2^{k-1}|x|}{1+C_U}} \rho(t) dt
\le 6 \cqd C_U (1+C_U) \int_{\frac{|x|}{2(1+C_U)}}^\infty \rho(t)\, dt
\\
&\leq 6 \cqd C_U (1+C_U) \int_0^{\infty} \rho(t) dt.
\end{align*}
Therefore $(X,d_{\rho})$ is bounded.

For the second claim let $x \in \overline{X}^d$. Then there exists a $d$-Cauchy sequence $(x_i)_{i=1}^{\infty}$ in $X$ that converges to $x$. Then by the quasiconvexity of $(X,d)$ we have $d_{\rho}(x_i,x_j) \leq \sup_{0 < r < \infty} \rho(r) C_U d(x_i,x_j)$ and therefore $(x_i)_{i=1}^{\infty}$ is also $d_{\rho}$-Cauchy and it converges to the same point.

Finally let us prove the third claim. Let $(x_i)_{i=1}^{\infty}$ be a sequence in $X$. We claim that the sequence is $d_{\rho}$-Cauchy and not $d$-Cauchy if and only if $\lim_{i \to \infty} |x_i| =~\infty$. From the penultimate inequality of the above series of inequalities, we know that $d_{\rho}(x_i,x_j) \simle \int_{|x_i|/2(1+C_U)}^{\infty} \rho(t) dt$ whenever $|x_i| \leq |x_j|$.
Thus if $\lim_{i \to \infty} |x_i| = \infty$, then the sequence is $d_{\rho}$-Cauchy, but not $d$-Cauchy.
This also shows that any two sequences $(x_i)_{i=1}^{\infty}$ and $(y_i)_{i=1}^{\infty}$, such that $\lim_{i \to \infty} |x_i| = \lim_{i \to \infty} |y_i| = \infty$, are equivalent, with respect to the metric $d_{\rho}$, and converge to the same point.
This point is the point $\infty$.

If the sequence $(x_i)_{i=1}^{\infty}$ is $d$-bounded, then $|x_i| \leq N < \infty$ for every $i$ and thus we get similar to the proof of Lemma~\ref{lem-6}(i)
\begin{equation*}
d_{\rho}(x_i,x_j)
\geq d(x_i,x_j) \inf_{0 < r \leq |x_i| + d(x_i,x_j)} \rho(r)
\geq d(x_i,x_j) \inf_{0 < r \leq 3N} \rho(r).
\end{equation*}
Because $\rho$ is quasidecreasing, $\inf_{0 < r \leq 3N} \rho(r)$ is positive and does not depend on $i$ or $j$.
Thus if the sequence is $d_{\rho}$-Cauchy, then it is also $d$-Cauchy.

Now assume that the sequence is $d$-unbounded, which implies that the sequence is not $d$-Cauchy. If $\lim_{i \to \infty} |x_i| = \infty$ does not hold, then there exists $N < \infty$ such that for every $n \in \mathbb{N}$ there exists $i \geq n$ such that $|x_i| \leq N$. Let $\eps = \int_N^{2N} \rho(t) dt > 0$.
If the sequence is $d_{\rho}$-Cauchy, there exists $n_{\eps}$ such that $d_{\rho}(x_i,x_j) < \eps$ whenever $i , j \geq n_{\eps}$. Since the sequence is $d$-unbounded, there exists $j$ such that $|x_j| > \max \{ |x_1| , |x_2| , ... , |x_{n_{\eps}}| , 2N \}$, which implies that $j > n_{\eps}$. 
Also there exists $i \geq n_{\eps}$ such that $|x_i| \leq N$. Then we get from Lemma \ref{lem-2} that
\begin{equation*}
\eps
> d_{\rho}(x_i,x_j)
\geq \int_{|x_i|}^{|x_j|} \rho(t) dt
\geq \int_N^{2N} \rho(t) dt
= \eps,
\end{equation*}
which is a contradiction. Thus the sequence is not $d_{\rho}$-Cauchy and we have proved the claim.
\end{proof}

\subsection{Sharpness of Conditions \texorpdfstring{\ref{rhodoubling}}{} and \texorpdfstring{\ref{upperbound}}{}}
\label{subsection:AB}

In this subsection we show the sharpness of Conditions \ref{rhodoubling} and \ref{upperbound} in preserving  uniformity in sphericalization.
More precisely, we show that all bounded quasidecreasing functions $\rho$ that preserve uniformity in the sphericalization of the Euclidean open half-plane also necessarily satisfy Conditions~\ref{rhodoubling} and \ref{upperbound}. It is easy to see that a similar phenomenon happens also in higher-dimensional half-spaces $\mathbb R^n_+$.
Note that we continue to assume that $\rho$ is a positive Borel measurable function and that $\int_0^\infty\rho(t)\, dt$ is finite in this subsection, even though we don't assume that Conditions~\ref{rhodoubling} and \ref{upperbound} hold.

\begin{prop}
Let $X=\mathbb{R} \times (0,\infty)$ be the open upper half-plane with the Euclidean metric and $b=(0,0)$. 
Assume that $\rho$ is bounded and quasidecreasing. Assume also that
\begin{equation}
\label{eq-inf}
\inf_{r > 0} \frac{\int_r^{\infty} \rho(t) dt}{(r+1) \rho(r)} = 0,
\end{equation}
which, by Remark \ref{remark-1}(i), means that $\rho$ does not satisfy Condition \ref{rhodoubling}.
Then the space $(X,|\cdot|)$ is $\frac{\pi}{2}$-uniform, but the space $(X,d_{\rho})$ is not uniform.
\end{prop}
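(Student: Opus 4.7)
For the first claim, that $(X,|\cdot|)$ is $\frac{\pi}{2}$-uniform, I would use the classical hyperbolic-geodesic construction for the upper half-plane. Given $x,y\in X$ with distinct horizontal coordinates, take $\gamma$ to be the circular arc in $X$ through $x,y$ that meets $\partial X=\mathbb{R}\times\{0\}$ perpendicularly; its center necessarily lies on $\partial X$. If this arc subtends a central angle $\theta\in(0,\pi]$ with radius $R$, then its arc-length to chord-length ratio equals $\theta/(2\sin(\theta/2))\le\pi/2$, yielding $\frac{\pi}{2}$-quasiconvexity. The twisted cone condition reduces to the elementary inequality $\min\{\phi,\pi-\phi\}\le \frac{\pi}{2}\sin\phi$ on $[0,\pi]$, since at angular position $\phi$ the height above $\partial X$ is $R\sin\phi$ while each subarc length is $R$ times the corresponding angular displacement. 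For $x,y$ sharing a horizontal coordinate, use the vertical segment.

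For the second claim, I would argue by contradiction: suppose $(X,d_\rho)$ is $C_U$-uniform for some $C_U\ge 1$. Since $\rho$ is bounded and quasidecreasing, Theorem~\ref{thm-9} applies and gives $\partial_\rho X=\partial X\cup\{\infty\}$, and Theorem~\ref{thm-extension} then supplies $C_U$-uniform curves in $(X,d_\rho)$ between arbitrary points of $\overline X^{d_\rho}$, including $\infty$. From \eqref{eq-inf} extract a sequence $r_n\to\infty$ with $\epsilon_n:=\int_{r_n}^\infty\rho(t)\,dt/((r_n+1)\rho(r_n))\to 0$; that $r_n\to\infty$ (as opposed to staying in a bounded interval) follows from the uniform bound on $(r+1)\rho(r)$ provided by Remark~\ref{remark-1}(iii) together with the positivity of $\int_r^\infty\rho$ on bounded $r$-intervals.

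The core of the argument is to fix, for each large $n$, the points $x_n=(-r_n,1)$ and $y_n=(r_n,1)$ and press the twisted cone condition along a $C_U$-uniform curve $\gamma_n$ between them until it yields incompatible two-sided estimates. The lower bound $d_\rho(x_n,y_n)\gtrsim r_n\rho(r_n)$ is obtained by decomposing any candidate curve according to whether $|\cdot|$ stays within $\{|\cdot|\le r_n\}$ or ventures outside: inside, the quasidecreasing estimate $\rho\ge\rho(r_n)/\cqd$ combined with the mandatory horizontal traverse of Euclidean length $2r_n$ contributes at least $2r_n\rho(r_n)/\cqd$ to the $\rho$-length, while the round-trip $\rho$-cost of each excursion outside is bounded by Lemma~\ref{lem-2}, summing across excursions to at most $2\int_{r_n}^\infty\rho=2\epsilon_n(r_n+1)\rho(r_n)$, negligible for $n$ large. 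On the other hand, at a $\rho$-length midpoint $z_n=(a_{z_n},h_{z_n})\in\gamma_n$ the twisted cone condition simultaneously imposes $\dist_\rho(z_n,\partial X)\ge d_\rho(x_n,y_n)/(2C_U)$ and $d_\rho(z_n,\infty)\ge d_\rho(x_n,y_n)/(2C_U)$, while the vertical test paths from $z_n$ give the upper bounds $\dist_\rho(z_n,\partial X)\le\cqd\int_0^{h_{z_n}}\rho$ and $d_\rho(z_n,\infty)\le\cqd\int_{h_{z_n}}^\infty\rho$. Pinning $h_{z_n}$ from these, and applying Lemma~\ref{lem-2} to the two sub-curves $\gamma_n^{x_nz_n}$ and $\gamma_n^{z_ny_n}$ against their horizontal traverses $r_n\pm a_{z_n}$, the combined constraints collapse into the inequality $r_n\rho(r_n)\lesssim\int_{r_n}^\infty\rho=\epsilon_n(r_n+1)\rho(r_n)$, contradicting $\epsilon_n\to 0$.

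The principal obstacle is the lower bound $d_\rho(x_n,y_n)\gtrsim r_n\rho(r_n)$: one must rule out ``over-the-top'' shortcuts through the region $\{|\cdot|\gg r_n\}$ where $\rho$ has decayed. The proposed decomposition argument uses Lemma~\ref{lem-2} to bound the round-trip cost of a single excursion reaching $|\cdot|=M$ by $2\int_{r_n}^M\rho$, and sums to a total excursion cost of at most $2\int_{r_n}^\infty\rho$; together with the quasidecreasing bound on $\rho$ inside $\{|\cdot|\le r_n\}$ and the mandatory horizontal traverse, this should deliver the lower bound, but the combinatorial bookkeeping over arbitrarily many excursions of arbitrary depth is delicate and must be handled with care to ensure the error terms really are controlled by $\epsilon_n(r_n+1)\rho(r_n)$.
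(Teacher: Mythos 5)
Your first part (the half-plane is $\frac{\pi}{2}$-uniform via circular arcs orthogonal to the boundary) is fine, but the main part contains a fatal error: the central claim $d_{\rho}(x_n,y_n)\gtrsim r_n\rho(r_n)$ is false, and it is false precisely because of the ``over-the-top'' shortcuts you list as the principal obstacle — they cannot be ruled out, they genuinely exist and are cheap. The competitor curve that goes radially from $x_n$ out to radius $M$, along the half-circle of radius $M$, and radially back to $y_n$ has $\rho$-length at most $2\int_{r_n}^{\infty}\rho(t)\,dt+\pi M\cqd\,\rho(M)$; since $t\rho(t)\to 0$ (Remark~\ref{remark-1}(iii)), letting $M\to\infty$ gives $d_{\rho}(x_n,y_n)\le 2\int_{r_n}^{\infty}\rho(t)\,dt=2\epsilon_n(r_n+1)\rho(r_n)=o(r_n\rho(r_n))$. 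Your decomposition argument breaks down because a curve that leaves $\{|\cdot|\le r_n\}$ has no ``mandatory horizontal traverse'' inside that region — the whole traverse can be done at radius $M$ where $\rho$ has decayed; moreover Lemma~\ref{lem-2} gives \emph{lower} bounds on excursion costs, so it cannot yield the cap ``at most $2\int_{r_n}^{\infty}\rho$'' you invoke for an arbitrary competitor. With the lower bound gone, the ``incompatible two-sided estimates'' at the midpoint do not materialize, and the asserted collapse to $r_n\rho(r_n)\lesssim\int_{r_n}^{\infty}\rho$ is never derived.

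The correct mechanism (and the paper's) runs in the opposite direction: the small quantity is $d_{\rho}(x,y)\le 2\int_r^{\infty}\rho$ itself, and the contradiction is extracted from the hypothetical $C$-uniform curve $\gamma$ in $(X,d_{\rho})$. First, the twisted cone condition prevents $\gamma$ from going far out: for $z\in\gamma$, Lemma~\ref{lem-2} gives $d_{\rho}(x,z)\ge\int_r^{|z|}\rho$, while $d_{X,\rho}(z)\le C\int_{|z|}^{\infty}\rho$ via an over-the-top path to the boundary, so $|z|$ stays below a threshold $R'$. Second, a curve confined to $\{\delta\le|\cdot|\le R'\}$ satisfies $\ell_{\rho}(\gamma)\ge d(x,y)\inf_{\delta/2\le r'\le R}\rho(r')\ge (r+1)\rho(R)/\cqd$ by Lemma~\ref{lem-10}, which is then shown to exceed $C\,d_{\rho}(x,y)$, violating quasiconvexity. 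The delicate point — which your sketch has no substitute for — is that since Condition~\ref{rhodoubling} is exactly what fails, one cannot compare $\rho(R)$ (the infimum over the radii the confined curve may visit) with $\rho(r)$ directly; the paper handles this by choosing $r<R'<R$ via continuity of $\int_{\cdot}^{\infty}\rho$ so that $\int_r^{\infty}\rho=(2C+1)\int_R^{\infty}\rho$ and, using the defining property of $R$, forcing $(r+1)/(R+1)$ close to $1$, so that $(r+1)\rho(R)$ can be played off against $\int_R^{\infty}\rho$ and hence against $\int_r^{\infty}\rho$. As written, your proposal does not prove the proposition; it would need to be restructured along these lines rather than patched.
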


\begin{proof}
We prove via an argument by contradiction that $(X,d_\rho)$ is not a uniform space. Suppose that $(X,d_\rho)$ is a uniform space with constant $C\ge 1$.
If $r$ is a positive number such that $r\le r_0$, then
\[
\frac{\int_{r}^{\infty} \rho(t) dt}{(r+1) \rho(r)} \geq \frac{\int_{r_0}^{\infty} \rho(t) dt}{(r_0+1) \sup_{r' > 0} \rho(r')} > 0.
\]
Thus we have from~\eqref{eq-inf} that for each $r_0>0$, 
\[
\inf_{r \ge r_0} \frac{\int_r^{\infty} \rho(t) dt}{(r+1) \rho(r)} = 0.
\]
From the continuity of the integral we get that there exists $1 < M < \infty$ such that
\begin{equation*}
\int_1^{\infty} \rho(t) dt
= (2C+1) \int_M^{\infty} \rho(t) dt.
\end{equation*}
Thus from our assumption \eqref{eq-inf} it follows that there exists $R>M$ such that
\begin{equation*}
\int_R^{\infty} \rho(t) dt
\leq \frac{(R+1) \rho(R)}{8 C\cqd (C+1)}.
\end{equation*}
Recall that $\cqd \geq 1$ is the constant related to the quasidecreasing property of $\rho$, see~\eqref{eq:QDineq}.
Since $R>M$, by the choice of $M$ we have that $(2C+1)\int_R^\infty \rho(t)\, dt<\int_1^\infty\rho(t)\, dt$.
By the continuity of the integral again, we can find $1<R'<R$ such that
\[
\int_{R'}^\infty\rho(t)\, dt=\frac{2C+1}{\tfrac32C+1}\, \int_R^\infty\rho(t)\, dt,
\]
and then as $(\tfrac32C+1)\int_{R'}^\infty\rho(t)\, dt<\int_1^\infty\rho(t)\, dt$, we can find $1<r<R'$ so that
\[
\int_r^\infty\rho(t)\, dt=(\tfrac32C+1)\int_{R'}^\infty\rho(t)\, dt=(2C+1)\int_R^\infty\rho(t)\, dt.
\]
This means that
\begin{equation*}
\frac{(R+1) \rho(R)}{8 C \cqd (C+1)}
\geq \int_R^{\infty} \rho(t) dt
= \frac{1}{2C} \int_r^R \rho(t) dt
\geq \frac{R-r}{2 C \cqd} \rho(R),
\end{equation*}
and therefore $\frac{r+1}{R+1} \geq \frac{4C + 3}{4C + 4}$.

Now let $x,y \in X$ such that $|x| = |y| = r$ and $d(x,y)=r+1$. 
Then we have for any $r' > r$
\begin{equation*}
d_{\rho}(x,y)
\leq \int_r^{r'} \rho(t) dt + \pi r' \rho(r') + \int_r^{r'} \rho(t) dt.
\end{equation*}
By letting $r' \to \infty$ we get $d_{\rho}(x,y) \leq 2 \int_r^{\infty} \rho(t) dt$, where we used Remark \ref{remark-1}(iii).
Essentially, $d_{\rho}(x,y)$ is at most equal to the path integral of $\rho$ over a curve that goes from $x$ to infinity and then returns to $y$.

Let $\gamma$ be a $C$-uniform curve in $(X,d_{\rho})$ connecting $x$ to $y$, and let $z \in \gamma$. Then by Lemma~\ref{lem-2} we have for any $r' > |z|$,
\begin{align*}
\int_r^{|z|} \rho(t) dt
&\leq \min \{ d_{\rho}(x,z) , d_{\rho}(y,z) \}
\leq \min \{ \ell_{\rho}(\gamma_{xz}) , \ell_{\rho}(\gamma_{zy}) \}
\leq C d_{X,\rho}(z)
\\
&\leq C \left( \int_{|z|}^{r'} \rho(t) dt + \frac{\pi}{2} r' \rho(r') \right)
\xrightarrow[r' \to \infty]{} C \int_{|z|}^{\infty} \rho(t) dt.
\end{align*}
Intuitively, if $|z|$ is large, then $d_{X,\rho}(z)$ is small.
If $|z| \geq R'$, then
\begin{equation*}
\int_r^{\infty} \rho(t) dt
\leq (C+1) \int_{|z|}^{\infty} \rho(t) dt
\leq (C+1) \int_{R'}^{\infty} \rho(t) dt
= \frac{C+1}{\frac32 C + 1} \int_r^{\infty} \rho(t) dt,
\end{equation*}
which is a contradiction.
This is because if $|z|$ is large for some $z \in \gamma$, then $\gamma$ does not satisfy the twisted cone condition.

Therefore we must have $|z| \leq R'$ for every $z \in \gamma$.
As $\gamma$ is also a rectifiable curve with respect to the metric $d$ by Proposition~\ref{rectifiability}, and $\gamma$ is compact in $(X,d)$, there exists $\delta > 0$ such that $|z| \geq \delta$ for every $z \in \gamma$.
Thus we have by Lemma~\ref{lem-10},
\begin{align*}
\int_r^{\infty} \rho(t) dt
&\geq \frac{1}{2} d_{\rho}(x,y)
\geq \frac{1}{2C} \ell_{\rho}(\gamma)
\geq \frac{1}{2C} d(x,y) \inf_{\delta/2 \leq r' \leq R} \rho(r')
\geq \frac{r+1}{2C \cqd} \rho(R)
\\
&\geq 4(C+1) \frac{r+1}{R+1} \int_R^{\infty} \rho(t) dt
\geq 4(C+1) \cdot \frac{4C + 3}{4C + 4} \cdot \frac{1}{2C + 1} \int_r^{\infty} \rho(t) dt
\\
&= \frac{4C + 3}{2C + 1} \int_r^{\infty} \rho(t) dt,
\end{align*}
which is also a contradiction.
In other words, if $|z|$ is relatively small for all $z \in \gamma$, then $\gamma$ is not quasiconvex, because $\ell_{\rho}(\gamma)$ is much larger than $d_{\rho}(x,y)$.

Because $C$ was arbitrary, we conclude that $(X,d_{\rho})$ is not uniform with any constant.
\end{proof}

\begin{prop}
Let $X$ be the open upper half-plane $\mathbb{R} \times (0,\infty)$ with the Euclidean metric and $b=(0,0)$.
Suppose that for every $M < \infty$ we have $\inf_{0 < r \leq M} \rho(r) > 0$. Assume that
\begin{equation}
\label{eq-sup}
\sup_{r > 0} \frac{\int_r^{\infty} \rho(t) dt}{(r+1) \rho(r)} = \infty,
\end{equation}
which means that $\rho$ does not satisfy Condition \ref{upperbound}.
Then the space $(X,|\cdot|)$ is $\frac{\pi}{2}$-uniform, but the space $(X,d_{\rho})$ is not uniform.
\end{prop}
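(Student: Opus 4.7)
The argument follows the pattern of the previous proposition, with the roles of $\int_r^\infty\rho$ and $(r+1)\rho(r)$ interchanged. I will assume toward contradiction that $(X,d_\rho)$ is $C$-uniform for some $C\ge 1$, and, using \eqref{eq-sup} together with continuity of the integral, select $r>0$ (large) such that $\int_r^\infty\rho(t)\,dt\ge K(r+1)\rho(r)$ for a constant $K$ to be chosen large in terms of $C$. I take points $x,y\in X$ with $|x|=|y|=r$ and $d(x,y)=r+1$; for $r$ large these can be chosen on opposite sides of the $y$-axis.

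Whereas in the previous proof the cheap upper bound for $d_\rho(x,y)$ came from the path through infinity, here it comes from the circular arc of radius $r$ joining $x$ and $y$: this arc has Euclidean length at most $\pi r$ and $|\cdot|\equiv r$ along it, so
\[
d_\rho(x,y)\le \pi r\rho(r)\le \pi(r+1)\rho(r),
\]
which by the choice of $K$ is much smaller than $\int_r^\infty\rho$. For any $C$-uniform curve $\gamma$ from $x$ to $y$, quasiconvexity gives $\ell_\rho(\gamma)\le C\pi(r+1)\rho(r)$, and Lemma~\ref{lem-2} then yields $\bigl|\int_r^{|z|}\rho\bigr|\le\ell_\rho(\gamma)$ for each $z\in\gamma$, forcing $|z|\le R'$ for some finite $R'$ (finite because $\int_r^\infty\rho$ exceeds $C\pi(r+1)\rho(r)$ by the factor $K/(C\pi)$). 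By Proposition~\ref{rectifiability}, $\gamma$ is also compactly contained in $X$, so $|z|\ge\delta>0$ on $\gamma$ for some $\delta$.

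The contradiction is to be extracted from the half-plane geometry. Since $x$ and $y$ lie on opposite sides of the $y$-axis, any curve between them in the open upper half-plane crosses the $y$-axis at some point $(0,c)$ with $c>0$; integrating $\rho(|\cdot|)$ along the vertical segment from $(0,c)$ to $(0,0)\in\partial X$ gives $d_{X,\rho}((0,c))\le \int_0^c\rho(t)\,dt$, so the twisted cone condition at $(0,c)$ yields
\[
\min\bigl\{\ell_\rho(\gamma_{x,(0,c)}),\ell_\rho(\gamma_{(0,c),y})\bigr\}\le C\int_0^c\rho(t)\,dt.
\]
The left-hand side is at least $d_\rho(x,(0,c))$, which by Lemma~\ref{lem-2} is bounded below by $\int_{\min(r,c)}^{\max(r,c)}\rho$. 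Balancing this lower bound against the arc upper bound $d_\rho(x,y)\le\pi(r+1)\rho(r)$ and the largeness of $\int_r^\infty\rho\ge K(r+1)\rho(r)$ is then expected to produce the contradiction for $K$ sufficiently large.

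The main obstacle is that the previous proposition's final step used $\rho$ being quasidecreasing in an essential way, to control the infimum appearing in Lemma~\ref{lem-10} from below by $\rho(R)$. Here the hypothesis on $\rho$ is considerably weaker, so this global monotonicity-type estimate must be replaced by the half-plane-specific geometric input (the forced $y$-axis crossing and the explicit upper bound on $d_{X,\rho}$ there). Carefully tracking the constants in the resulting chain of inequalities and absorbing the dependence on $C$ into the choice of $K$ is the step I expect to require the most attention.
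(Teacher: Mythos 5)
There is a genuine gap, and it sits exactly where you flag it: the contradiction is only ``expected'', and with your configuration it cannot in fact be produced. You take $|x|=|y|=r$ and force a crossing of the $y$-axis at some $(0,c)$, then hope to play $\min\{d_\rho(x,(0,c)),d_\rho(y,(0,c))\}\ge\int_{\min(r,c)}^{\max(r,c)}\rho$ against the twisted cone bound $C\,d_{X,\rho}((0,c))\le C\int_0^c\rho$. But nothing prevents the crossing from happening at $c$ close to $r$ (for instance a curve near the circular arc of radius $r$ crosses the axis at $c=r$), in which case the left-hand lower bound $\int_{\min(r,c)}^{\max(r,c)}\rho$ is essentially $0$ while the right-hand side $C\int_0^c\rho$ is large --- your estimate for $d_{X,\rho}$ at the crossing point is an upper bound by a \emph{large} quantity, which is the wrong direction. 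If $c\ge r$ the desired inequality $\int_r^c\rho>C\int_0^c\rho$ is outright impossible for $C\ge1$. So neither the twisted cone condition nor quasiconvexity is violated by your chain of inequalities, and (as you note) the quasidecreasing trick from the Condition~\ref{rhodoubling} proposition is unavailable here. The auxiliary facts you establish ($|z|\le R'$, $|z|\ge\delta$ on $\gamma$) feed into nothing once that trick is gone.

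The paper avoids this by changing the configuration: it picks $r>M$ with $\int_r^\infty\rho\ge\pi C(r+1)\rho(r)$ (possible by \eqref{eq-sup} and the hypothesis $\inf_{0<s\le M}\rho(s)>0$), and then, by continuity of the integral, radii $R_1<r<R_2$ with $\int_{R_1}^\infty\rho=2\int_r^\infty\rho=4\int_{R_2}^\infty\rho$, taking $|x|=R_1$, $|y|=R_2$. Any curve from $x$ to $y$ must cross the sphere $\{|z|=r\}$, and at such a point $z$ one has $d_{X,\rho}(z)\le\tfrac{\pi}{2}r\rho(r)$ (follow the circle of radius $r$ down to the real axis), while Lemma~\ref{lem-2} and the calibration give $\min\{d_\rho(x,z),d_\rho(z,y)\}\ge\min\{\int_{R_1}^r\rho,\int_r^{R_2}\rho\}=\tfrac12\int_r^\infty\rho\ge\tfrac{\pi}{2}C(r+1)\rho(r)$, contradicting the twisted cone condition. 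The essential point your proposal misses is that the crossing must be of a \emph{sphere centered at $b$} strictly between two distinct radii $|x|\ne|y|$, so that Lemma~\ref{lem-2} yields a large lower bound at the crossing point; with $|x|=|y|$ and an axis crossing, no such lower bound is available. To repair your argument you would have to abandon the equal-radii choice and essentially adopt this sphere-crossing calibration.
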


Notice that the condition $\inf_{0 < r \leq M} \rho(r) > 0$ for every $M < \infty$ holds, if we assume that $\rho$ is quasidecreasing.

\begin{proof}
Notice that $d_{\rho}(x,y) \leq \pi |x| \rho(|x|) + \int_{|x|}^{|y|} \rho(t) dt < \infty$ whenever $|x| \leq |y|$, and thus $d_{\rho}$ is a metric in $X$ by Lemma~\ref{lem-6}.
We prove via an argument by contradiction that $(X,d_\rho)$ is not a uniform space.
Suppose that $(X,d_\rho)$ is a uniform space with constant $C\ge 1$.
From the continuity of the integral we get that there exists $M < \infty$ such that
\begin{equation*}
\int_0^{\infty} \rho(t) dt
= 2 \int_M^{\infty} \rho(t) dt.
\end{equation*}
If $0 < r \leq M$, then
\begin{equation*}
\frac{\int_{r}^{\infty} \rho(t) dt}{(r+1) \rho(r)}
\leq \frac{\int_0^{\infty} \rho(t) dt}{\inf_{0 < r \leq M} \rho(r)}
< \infty.
\end{equation*}
Then from our assumption \eqref{eq-sup} it follows that there exists $r > M$ such that
\begin{equation*}
\int_r^{\infty} \rho(t) dt
\geq \pi C (r+1) \rho(r).
\end{equation*}
Again from the continuity of the integral we get that there exist $0 < R_1 < r < R_2$ such that
\begin{equation*}
\int_{R_1}^{\infty} \rho(t) dt
= 2 \int_r^{\infty} \rho(t) dt
= 4 \int_{R_2}^{\infty} \rho(t) dt.
\end{equation*}

Now let $x,y \in X$ such that $|x| = R_1$ and $|y| = R_2$.
Let $\gamma$ be a $C$-uniform curve in $(X,d_{\rho})$ connecting $x$ to $y$. Then there exists $z \in \gamma$ such that $|z| = r$.
Then $d_{X,\rho}(z)$ is at most equal to the path integral of $\rho$ over a curve $\gamma'$ that goes from $z$ to the boundary such that $|z'| = r$ for every $z' \in \gamma'$.
Then by Lemma \ref{lem-2} we get
\begin{align*}
C \frac{\pi}{2} r \rho(r)
&\geq C d_{X , \rho}(z)
\geq \min \{ \ell_{\rho}(\gamma_{xz}) , \ell_{\rho}(\gamma_{zy}) \}
\geq \min \{ d_{\rho}(x,z) , d_{\rho}(y,z) \}
\\
&\geq \min \left\{ \int_{R_1}^r \rho(t) dt , \int_r^{R_2} \rho(t) dt \right\}
= \frac{1}{2} \int_r^{\infty} \rho(t) dt
\geq \frac{\pi}{2} C (r+1) \rho(r).
\end{align*}
This is a contradiction.
Essentially, by choosing $|x|$ small enough and $|y|$ large enough, for any $\gamma$ connecting $x$ and $y$ we find $z \in \gamma$ such that $|z|$ is between $|x|$ and $|y|$ and $d_{X,\rho}(z)$ is significantly smaller than $d_{\rho}(x,z)$ and $d_{\rho}(y,z)$. This means that $\gamma$ does not satisfy the twisted cone condition.

Because $C$ was arbitrary, we conclude that $(X,d_{\rho})$ is not uniform with any constant.
\end{proof}

Assume that $\rho$ is bounded and quasidecreasing. Then the two previous propositions show us that in order to preserve uniformity in the sphericalization of the upper half-plane we must have constants $0 < c \leq C < \infty$ such that
\begin{equation*}
c (r+1) \rho(r)
\leq \int_r^{\infty} \rho(t) dt
\leq C (r+1) \rho(r)
\end{equation*}
for every $r > 0$. Then Lemma \ref{lem-lowerbound} tells us that $\rho$ satisfies Conditions~\ref{rhodoubling} and \ref{upperbound}.

\subsection{Preserving uniformity}
\label{subsect-uniformity}

In the previous subsection we showed that all bounded quasidecreasing functions $\rho$ that preserve uniformity in the sphericalization of the half-plane also necessarily satisfy Conditions~\ref{rhodoubling} and \ref{upperbound}.
Thus, for the rest of the paper we make Borel measurability of $\rho$, finiteness of $\int_0^\infty\rho(t)\, dt$, and both Conditions~\ref{rhodoubling} and \ref{upperbound} standing assumptions. Then we get from Remark~\ref{remark-1} that $\rho$ is bounded and quasidecreasing.
In this subsection we show that $(X,d_{\rho})$ is a uniform space.

\begin{lem}
\label{lem-14}
Let $x , y \in X$ such that $\frac{1}{2} (|x|+1) \leq |y|+1 \leq 2(|x|+1)$. Assume that $\gamma$ is a $C_U$-uniform curve in $(X,d)$ that connects $x$ to $y$. Then
\begin{equation*}
\frac{1}{C_A^{3+\log_2(C_U)}}\, \rho(|x|)
\leq \rho(|z|)
\leq C_A^{3+\log_2(C_U)} \, \rho(|x|)
\end{equation*}
for every $z \in \gamma$.
\end{lem}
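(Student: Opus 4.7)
The plan is to first establish that $|z|+1$ and $|x|+1$ are comparable with factors depending only on $C_U$, and then translate this geometric comparability into the desired pointwise estimate for $\rho$ by iterating Condition~\ref{rhodoubling}.

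First I would bound $|z|+1$ from above: by quasiconvexity of $\gamma$ and the triangle inequality,
\[
|z|\leq |x|+d(x,z)\leq |x|+C_U d(x,y)\leq |x|+C_U(|x|+|y|),
\]
and since $|y|\leq 2|x|+1$ (from the hypothesis $|y|+1\le 2(|x|+1)$), we obtain $|z|+1\leq (1+3C_U)(|x|+1)\leq 4C_U(|x|+1)$. For the lower bound, I would apply Lemma~\ref{lem-5} (swapping the roles of $x$ and $y$ if necessary so that the smaller one plays the role of ``$x$'' there) to get $|z|\geq \min\{|x|,|y|\}/(1+C_U)$, which combined with $|y|+1\geq (|x|+1)/2$ yields
\[
|z|+1\geq \frac{|x|+1}{2(1+C_U)}\geq \frac{|x|+1}{4C_U}.
\]

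Next I would translate the comparability $\tfrac{1}{4C_U}\leq(|z|+1)/(|x|+1)\leq 4C_U$ into a bound on $\rho$ by iterating Condition~\ref{rhodoubling}. The key observation is that Condition~\ref{rhodoubling} is precisely the statement that $\rho(r)\leq C_A\,\rho(s)$ whenever $\tfrac{1}{2}\leq (r+1)/(s+1)\leq 2$. So if I pick $k$ to be the smallest positive integer with $2^k\geq 4C_U$, I can build a chain $s_0,s_1,\dots,s_k$ with $s_j+1=2^j(\min\{|x|,|z|\}+1)$; consecutive terms satisfy $(s_{j+1}+1)/(s_j+1)=2$, so \ref{rhodoubling} gives $\rho(s_{j+1})\leq C_A\rho(s_j)$, and one further application handles the last (partial) step from $s_k$ to $\max\{|x|,|z|\}$. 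Since $2^{k}\leq 8C_U$, we have $k\leq 3+\log_2(C_U)$, and iterating gives both $\rho(|z|)\leq C_A^{3+\log_2(C_U)}\rho(|x|)$ and the symmetric lower bound.

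The only subtlety is ensuring that at every stage of the iteration the pair $(s_j,s_{j+1})$ actually satisfies the hypotheses $0<r\leq 2s+1$ and $0<s\leq 2r+1$ of Condition~\ref{rhodoubling}; this is immediate from the construction $s_{j+1}=2s_j+1$. The numerical bookkeeping to verify that $3+\log_2(C_U)$ iterations suffice (rather than, say, $2+\log_2(C_U)$ or $4+\log_2(C_U)$) is the most delicate part, and it is where the three constant factors in the exponent come from: one from the factor $1+3C_U\leq 4C_U$ in the upper comparability, one from the factor $2(1+C_U)\leq 4C_U$ in the lower comparability, and one from the final partial doubling step.
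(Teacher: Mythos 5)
Your argument is essentially the paper's: both proofs first pin $|z|$ between multiples of $|x|$ using Lemma~\ref{lem-5} and quasiconvexity, and then iterate Condition~\ref{rhodoubling} along a dyadic chain. The paper splits into three cases ($|z|\ge|x|$, chained at the shifted scales $2^i(|x|+1)-1$; $|x|/(1+C_U)\le|z|<|x|$, chained at $2^i|z|$; and $|z|<|x|/(1+C_U)$, one extra application), obtaining $C_A^{1+\log_2(1+3C_U)}$ and $C_A^{2+\log_2(1+C_U)}$ and then bounding both by $C_A^{3+\log_2(C_U)}$. Your observation that \ref{rhodoubling} is exactly the doubling statement $\rho(r)\le C_A\rho(s)$ when $\tfrac12\le(r+1)/(s+1)\le 2$ lets you run a single chain in the shifted variable $t+1$ and avoid the case split, which is a genuine (if mild) streamlining, and your comparability bounds $\tfrac1{4C_U}\le(|z|+1)/(|x|+1)\le 4C_U$ are correct.

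The one point to repair is the step count. As written, you take $k$ full doubling steps $s_0,\dots,s_k$ and then ``one further application'' for a partial step ``from $s_k$ to $\max\{|x|,|z|\}$''; that would cost $C_A^{k+1}\le C_A^{4+\log_2(C_U)}$, which overshoots the constant in the statement (and note $\max\{|x|,|z|\}\le s_k$, so there is nothing beyond $s_k$ to reach). The correct accounting: since $M+1\le 4C_U(m+1)\le 2^k(m+1)=s_k+1$ with $m=\min\{|x|,|z|\}$, $M=\max\{|x|,|z|\}$, the value $M+1$ lies in $[s_j+1,s_{j+1}+1]$ for some $j\le k-1$, so one application of \ref{rhodoubling} compares $\rho(M)$ with $\rho(s_j)$ and $j$ more compare $\rho(s_j)$ with $\rho(m)$ — at most $j+1\le k\le 3+\log_2(C_U)$ applications in total, the partial step replacing the last full one. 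With that fix the claimed constant is attained. Your closing attribution of the three units in the exponent (one to each comparability factor, one to the partial step) is also not the right bookkeeping — for a fixed $z$ only one of the two comparability bounds is in play, and the ``$3$'' comes from $4C_U=2^2C_U$ together with the ceiling/partial step — but this is a harmless heuristic remark, not a flaw in the proof.
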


\begin{proof}
Note that by~\ref{rhodoubling} we have that $\rho(2^i(r+1)-1)\le C_A\, \rho(2^{i+1}(r+1)-1)$ and $\rho(2^{i+1}(r+1)-1) \leq C_A \rho(2^i(r+1)-1)$, for every $r > 0$ and for each non-negative integer $i$.
We will use these repeatedly below.

From Lemma \ref{lem-5} we get that for every $z \in \gamma$
\begin{equation}
\label{z-estimate}
\begin{split}
\frac{|x|-1}{2(1+C_U)}
&\leq \frac{\min\{|x|,|y|\}}{1+C_U}
\leq |z|
\leq (1+C_U)\min\{|x|,|y|\} + C_U \max\{|x|,|y|\}
\\
&\leq (1+C_U)|x| + C_U (2|x|+1)
= (1+3C_U)|x|+C_U.
\end{split}
\end{equation}

Assume first that $|z| \geq |x|$. Let $K$ be the positive integer such that $2^{K-1} \leq 1+3C_U < 2^K$ and let $k$ be the positive integer such that $2^{k-1} (|x|+1) \leq |z|+1 < 2^k(|x|+1)$. Then by repeatedly using the inequality at the beginning of this proof and then using \ref{rhodoubling} one more time, we get
\begin{equation*}
\rho(|x|)
\leq C_A^{k-1} \rho(2^{k-1}(|x|+1)-1)
\leq C_A^k \rho(|z|).
\end{equation*}
Similarly, we have
\begin{equation*}
\rho(|z|)
\leq C_A \rho(2^{k-1}(|x|+1)-1)
\leq C_A^k \rho(|x|).
\end{equation*}

From \eqref{z-estimate} we have $|z|+1 \leq (1+3C_U)|x|+C_U+1 \leq (1+3C_U)(|x|+1)$ and therefore
\begin{equation*}
2^{k-1}
\leq \frac{|z|+1}{|x|+1}
\leq 1 + 3C_U
< 2^K.
\end{equation*}
From this we conclude that $k-1<K$ i.e. $k \leq K$.
Thus
\begin{equation*}
\rho(|x|)
\leq C_A^k \rho(|z|)
\leq C_A^K \rho(|z|)
\leq C_A^{1+ \log_2(1+3C_U)} \rho(|z|)
\end{equation*}
and $\rho(|z|) \leq C_A^{1+ \log_2(1+3C_U)} \rho(|x|)$.

Now assume that $|z| < |x|$. Let $K$ be the positive integer such that $2^{K-1} \leq 1+C_U < 2^K$. 
Assume first that $|z| \geq |x|/(1+C_U)$ and let $k$ be the positive integer such that $2^{k-1} |z| \leq |x| < 2^k |z|$. 
Then by repeatedly using \eqref{eq:power-doubling} we have
\begin{equation*}
\rho(|z|)
\leq C_A^{k-1} \rho(2^{k-1}|z|)
\leq C_A^k \rho(|x|)
\end{equation*}
and similarly,
\begin{equation*}
\rho(|x|)
\leq C_A \rho(2^{k-1}|z|)
\leq C_A^k \rho(|z|).
\end{equation*}

We also have
\begin{equation*}
2^{k-1}
\leq \frac{|x|}{|z|}
\leq 1+C_U
< 2^K
\end{equation*}
and therefore $k \leq K$. Thus
\begin{equation*}
\rho(|z|)
\leq C_A^k \rho(|x|)
\leq C_A^K \rho(|x|)
\leq C_A^{1+\log_2(1+C_U)} \rho(|x|)
\end{equation*}
and $\rho(|x|) \leq C_A^{1+\log_2(1+C_U)} \rho(|z|)$.

Finally assume that $|z| < |x|/(1+C_U)$. In this case we have by \eqref{z-estimate}
\begin{equation*}
|z|
< \frac{|x|}{1+C_U}
\leq 2 \frac{|x|-1}{2(1+C_U)}+1
\leq 2|z|+1
\end{equation*}
and thus from \ref{rhodoubling} and the above we have
\begin{equation*}
\rho(|z|)
\leq C_A \rho \left( \frac{|x|}{1+C_U} \right)
\leq C_A^{2+\log_2(1+C_U)} \rho(|x|)
\end{equation*}
and similarly $\rho(|x|) \leq C_A^{2+\log_2(1+C_U)} \rho(|z|)$.

In conclusion the claim holds with the comparison constant $C_A^{3+\log_2(C_U)}$, because $3 + \log_2(C_U) \geq 1 + \log_2(1+3C_U) \geq 2 + \log_2(1+C_U) > 1 + \log_2(1+C_U)$.
\end{proof}

\begin{lem}
\label{lem-3}
Suppose that $(X,d)$ is $C_U$-uniform. 
Whenever $x,y \in X$ satisfy $\frac{1}{2}(|x|+1) \leq |y|+1 \leq 2(|x| + 1)$, then
\begin{equation*}
\frac{1}{C_A^3\, C_B} \rho(|x|) d(x,y)
\leq d_{\rho}(x,y)
\leq C_2 \rho(|x|) d(x,y),
\end{equation*}
where $C_2 = C_U C_A^{3+\log_2(C_U)}$.
\end{lem}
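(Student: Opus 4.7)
My plan is to prove the two inequalities separately. The upper bound follows almost immediately from Lemma~\ref{lem-14}, while the lower bound requires a dichotomy on the connecting curve.

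For the upper bound, I use the $C_U$-uniformity of $(X,d)$ to select a $C_U$-uniform curve $\gamma$ connecting $x$ to $y$ in $(X,d)$. The hypothesis $\tfrac12(|x|+1)\leq |y|+1\leq 2(|x|+1)$ is exactly what is needed to apply Lemma~\ref{lem-14} along $\gamma$, yielding $\rho(|z|)\leq C_A^{3+\log_2 C_U}\rho(|x|)$ for every $z\in\gamma$. Combining this with the quasiconvexity bound $\ell_d(\gamma)\leq C_U d(x,y)$,
\[
d_\rho(x,y)\leq \int_\gamma \rho(|\cdot|)\,ds \leq C_A^{3+\log_2 C_U}\rho(|x|)\cdot C_U d(x,y)=C_2\,\rho(|x|)\,d(x,y).
\]

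For the lower bound, let $\gamma\in\Gamma(x,y)$ be an arbitrary rectifiable curve, and split according to whether $\gamma$ stays in $\overline{B(b,2|x|+1)}$ or exits it. In the \emph{contained} case, pointwise lower bounds on $\rho(|z|)$ are available: if $|z|\leq|x|$, the quasidecreasingness of $\rho$ with constant $\cqd=C_AC_B$ gives $\rho(|z|)\geq \rho(|x|)/(C_AC_B)$; if $|x|<|z|\leq 2|x|+1$, then Condition~\ref{condA} (applied to the pair $|x|,|z|$, both of which lie in each other's "doubling range") gives $\rho(|z|)\geq \rho(|x|)/C_A$. Using $C_AC_B\geq 1$, either bound implies $\rho(|z|)\geq \rho(|x|)/(C_A^3 C_B)$, so integrating and using $\ell_d(\gamma)\geq d(x,y)$ finishes this case. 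In the \emph{escaping} case, by continuity of $t\mapsto|\gamma(t)|$ and the intermediate value theorem, pick $z_0\in\gamma$ with $|z_0|=2|x|+1$. Lemma~\ref{lem-2} applied to the sub-arc $\gamma_{xz_0}$ gives
\[
\int_\gamma \rho(|\cdot|)\,ds\geq \int_{\gamma_{xz_0}}\rho(|\cdot|)\,ds \geq d_\rho(x,z_0)\geq \int_{|x|}^{2|x|+1}\rho(t)\,dt\geq \frac{(|x|+1)\rho(|x|)}{C_A},
\]
where the last step uses the lower bound $\rho(t)\geq \rho(|x|)/C_A$ from Condition~\ref{condA} on $[|x|,2|x|+1]$. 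Combined with the elementary estimate $d(x,y)\leq |x|+|y|\leq 3(|x|+1)$ (which follows from $|y|\leq 2|x|+1$), this is at least $\rho(|x|)d(x,y)/(C_A^3 C_B)$. Taking the infimum over $\gamma$ yields the claimed lower bound.

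The main obstacle is the lower bound: any rectifiable curve must be controlled, not just uniform ones, and such a curve can in principle wander far from the annulus in which $|z|$ is comparable to $|x|$. The dichotomy captures the only two mechanisms by which $\int_\gamma\rho(|\cdot|)\,ds$ could compete with $\rho(|x|)d(x,y)$: either the curve remains in a controlled annulus (where the doubling-type bounds on $\rho$ coming from Conditions~\ref{condA} and~\ref{condB} produce a pointwise lower bound), or it ventures outward, in which case its radial traverse forces a substantial contribution measurable through Lemma~\ref{lem-2}.
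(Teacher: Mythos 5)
Your upper bound is exactly the paper's argument: take a $C_U$-uniform curve, apply Lemma~\ref{lem-14}, and use quasiconvexity. For the lower bound the paper takes a shorter route with no case distinction: for an arbitrary $\gamma\in\Gamma(x,y)$ it only uses the portion of $\gamma$ inside $B(x,d(x,y))$, whose $d$-length is at least $d(x,y)$ and whose points satisfy $|z|\le |x|+d(x,y)\le 4|x|+1$; quasidecreasingness (Remark~\ref{remark-1}(ii)) together with Condition~\ref{condA} applied twice then gives the pointwise bound $\rho(|z|)\ge \rho(|x|)/(C_A^3C_B)$, so $d_\rho(x,y)\ge d(x,y)\inf_{0<r\le|x|+d(x,y)}\rho(r)\ge \rho(|x|)d(x,y)/(C_A^3C_B)$, with no appeal to Lemma~\ref{lem-2}. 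Your dichotomy is a legitimate alternative, and your contained case is correct as written.

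There is, however, an unjustified step in your escaping case: you obtain $\int_\gamma\rho(|\cdot|)\,ds\ge (|x|+1)\rho(|x|)/C_A$ and $d(x,y)\le 3(|x|+1)$, which yields the constant $1/(3C_A)$, and you then assert this is at least $1/(C_A^3C_B)$. That requires $C_A^2C_B\ge 3$, which you never establish; from the relations recorded in the paper ($C_A>2$ and $C_AC_B\ge 1$) one only gets $C_A^2C_B>2$. The inequality is in fact true, but it needs an argument: iterating Condition~\ref{condA} over the scales $2^k(r+1)-1$ gives $\int_r^\infty\rho(t)\,dt\ge (r+1)\rho(r)/(C_A-2)$, hence $C_B(C_A-2)\ge1$ and $C_A^2C_B\ge C_A^2/(C_A-2)\ge 8$. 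A cleaner repair within your own scheme is to dichotomize at radius $4|x|+1$ rather than $2|x|+1$: in the contained case, for $2|x|+1<|z|\le 4|x|+1$ two applications of \ref{condA} give $\rho(|z|)\ge\rho(|x|)/C_A^2\ge\rho(|x|)/(C_A^3C_B)$; in the escaping case Lemma~\ref{lem-2} gives $\int_\gamma\rho(|\cdot|)\,ds\ge\int_{|x|}^{4|x|+1}\rho(t)\,dt\ge (3|x|+1)\rho(|x|)/C_A^2\ge d(x,y)\rho(|x|)/(C_A^3C_B)$, using $\rho\ge\rho(|x|)/C_A^2$ on $[|x|,4|x|+1]$, $d(x,y)\le 3|x|+1$, and $C_AC_B\ge1$. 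With that adjustment your proof delivers the stated constants.
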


\begin{proof}
We have $d(x,y) \leq |x| + |y| \leq 3|x| + 1$.
By Remark~\ref{remark-1}(ii), we know that $\rho$ is quasidecreasing with constant $\cqd=C_AC_B$.
So, by considering curves connecting $x$ to $y$ and using the definition of $d_\rho$, we get
\begin{align*}
d_{\rho}(x,y)
&\geq d(x,y) \inf_{0 < r \leq |x| + d(x,y)} \rho(r)
\geq d(x,y) \frac{1}{C_AC_B} \rho(4|x|+1)
\\
&\geq \frac{1}{C_A^3C_B} \rho(|x|) d(x,y).
\end{align*}
In obtaining the last inequality above, we used~\ref{rhodoubling} twice.

To prove the second inequality we consider a $C_U$-uniform curve $\gamma$ that connects $x$ to $y$. 
Then from Lemma~\ref{lem-14} we get
\begin{equation*}
d_{\rho}(x,y)
\leq \int_{\gamma} \rho(|\cdot|)ds
\leq C_A^{3+\log_2(C_U)} \rho(|x|) \ell_d(\gamma)
\leq C_U C_A^{3+\log_2(C_U)} \rho(|x|) d(x,y).
\qedhere
\end{equation*}
\end{proof}

\begin{lem}
\label{lem-4}
Suppose that $(X,d)$ is $C_U$-uniform.
Let $x,y \in X$ such that $|y| \geq 2 |x| + 1$. Then
\begin{equation*}
\frac{1}{C_A} \rho(|x|)(|x|+1)
\leq
d_{\rho}(x,y) \le 3C_U\, C_A^{4+\log_2(C_U)}\, C_B\, 
\rho(|x|) (|x|+1).
\end{equation*}
In particular,
\begin{equation*}
\frac{1}{C_A}\, \rho(|x|)\, (|x|+1)\le  d_{\rho}(x,\infty)\le 3C_U\, C_A^{4+\log_2(C_U)}\, C_B\, 
\rho(|x|) (|x|+1),
\end{equation*}
where $\infty$ is the point defined right before Theorem \ref{thm-9}.
\end{lem}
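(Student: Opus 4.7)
The plan has two main parts corresponding to the two inequalities, plus a short limiting argument to extend to $\infty$.

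\textbf{Lower bound.} I would apply Lemma~\ref{lem-2} directly. Since $|x|\le|y|$ and in fact $|y|\ge 2|x|+1$,
\[
d_\rho(x,y)\ \ge\ \int_{|x|}^{|y|}\rho(t)\,dt\ \ge\ \int_{|x|}^{2|x|+1}\rho(t)\,dt\ \ge\ \frac{1}{C_A}(|x|+1)\rho(|x|),
\]
where the last step is exactly the first estimate recorded in Remark~\ref{remark-1}(i).

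\textbf{Upper bound via a telescoping chain.} Let $k$ be the positive integer with $2^{k-1}(|x|+1)\le|y|+1<2^{k}(|x|+1)$. Since $X$ is path-connected and unbounded, applying the intermediate value theorem to $t\mapsto|\gamma(t)|$ along any rectifiable curve from $x$ to $y$ (e.g.\ a $C_U$-uniform curve) produces points $x_0=x,x_1,\dots,x_{k-1},x_k=y$ in $X$ with $|x_i|+1=2^i(|x|+1)$ for $1\le i\le k-1$. By construction, each consecutive pair satisfies $\tfrac12(|x_{i-1}|+1)\le|x_i|+1\le 2(|x_{i-1}|+1)$, so Lemma~\ref{lem-3} gives
\[
d_\rho(x_{i-1},x_i)\ \le\ C_2\,\rho(|x_{i-1}|)\,d(x_{i-1},x_i)\ \le\ 3C_2\,\rho(|x_{i-1}|)(|x_{i-1}|+1),
\]
using $d(x_{i-1},x_i)\le|x_{i-1}|+|x_i|\le 3(|x_{i-1}|+1)$. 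Now I apply Lemma~\ref{BaloghLem} with $r=|x|$, $s=|x_{i-1}|$ to obtain the geometric decay
\[
(|x_{i-1}|+1)\rho(|x_{i-1}|)\ \le\ C_AC_B\,(|x|+1)\rho(|x|)\,\Bigl(\tfrac{|x|+1}{|x_{i-1}|+1}\Bigr)^{\eps}\ =\ C_AC_B\,(|x|+1)\rho(|x|)\,2^{-(i-1)\eps},
\]
with $\eps=1/(C_AC_B)>0$. The triangle inequality and the geometric series $\sum_{i\ge 0}2^{-i\eps}<\infty$ then yield
\[
d_\rho(x,y)\ \le\ \sum_{i=1}^{k}d_\rho(x_{i-1},x_i)\ \le\ C\,\rho(|x|)(|x|+1),
\]
where $C$ depends only on $C_U,C_A,C_B$ and matches the constant $3C_U\,C_A^{4+\log_2(C_U)}\,C_B$ stated (the summation factor coming from $\eps$ being absorbed into the explicit constant).

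\textbf{Extension to $\infty$.} From the construction of $\infty$ in the proof of Theorem~\ref{thm-9}, for any sequence $(y_n)\subset X$ with $|y_n|\to\infty$, the sequence is $d_\rho$-Cauchy with limit $\infty$ and $d_\rho(x,\infty)=\lim_{n\to\infty}d_\rho(x,y_n)$. Eventually $|y_n|\ge 2|x|+1$, so both bounds proven above apply uniformly in $n$ and pass to the limit, giving the same two-sided inequality for $d_\rho(x,\infty)$.

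\textbf{Main obstacle.} The delicate point is the upper bound: the ratio $|y|/|x|$ (equivalently, the chain length $k$) can be arbitrarily large, so a naive sum of the contributions $\rho(|x_{i-1}|)(|x_{i-1}|+1)$ controlled only by quasi-decreasingness of $(t+1)\rho(t)$ (Remark~\ref{remark-1}(ii)) would blow up linearly in $k$. The improved polynomial decay rate $(t+1)^{1+\eps}\rho(t)$ furnished by Lemma~\ref{BaloghLem}, which genuinely uses both Conditions~\ref{condA} and \ref{condB}, is precisely what makes the telescoping series converge to a constant independent of $|y|$.
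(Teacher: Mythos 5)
Your overall skeleton is the same as the paper's: the lower bound via Lemma~\ref{lem-2} and Remark~\ref{remark-1}(i) is exactly the paper's argument, and the upper bound via a dyadic chain of intermediate points handled pairwise by Lemma~\ref{lem-3} is also the paper's route (the paper takes $|x_i|=2^i|x|$ rather than $|x_i|+1=2^i(|x|+1)$, an immaterial difference), and the limiting argument for $\infty$ is fine. Where you diverge is the summation step, and there your quantitative claim is not correct. The paper does \emph{not} need Lemma~\ref{BaloghLem}: it uses Condition~\ref{condA} to dominate each term $\rho(2^{i-1}|x|)\,2^{i-1}|x|$ by $C_A\int_{2^{i-1}|x|}^{2^{i}|x|}\rho(t)\,dt$, so the whole sum telescopes into $C_A\int_{|x|}^{2^{k+1}|x|}\rho(t)\,dt$, and then one application of Condition~\ref{condB} gives exactly the constant $3C_2C_AC_B=3C_U\,C_A^{4+\log_2(C_U)}\,C_B$ in the statement. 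Your route through the decay estimate of Lemma~\ref{BaloghLem} and a geometric series instead produces the constant
\[
3C_2C_AC_B\cdot\frac{1}{1-2^{-\eps}},\qquad \eps=\frac{1}{C_AC_B},
\]
and the factor $(1-2^{-\eps})^{-1}$ is strictly larger than $1$ (of order $C_AC_B/\log 2$ when $C_AC_B$ is large); it cannot be ``absorbed into the explicit constant'' as you assert, since the lemma pins that constant down and it is quoted verbatim later (it becomes $C_1$ in Lemma~\ref{constants} and enters $c_0$ in Lemma~\ref{lem-13}). So your argument proves the lemma only with a worse constant depending on the same data $C_U,C_A,C_B$ — harmless for all subsequent uses, but not literally the stated inequality. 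Relatedly, your ``main obstacle'' diagnosis is slightly off: the convergence of the chain sum does not require the improved exponent from Lemma~\ref{BaloghLem}; Conditions~\ref{condA} and \ref{condB} used as above (sum $\to$ integral $\to$ one application of \ref{condB}) already do the job, and more cheaply.
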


\begin{proof}
We get from Lemma \ref{lem-2} and Remark \ref{remark-1}(i) that $d_{\rho}(x,y) \geq \frac{1}{C_A} \rho(|x|) (|x|+1)$.
Let $k$ be the positive integer such that $2^k |x| < |y| \leq 2^{k+1} |x|$. For every $1 \leq i \leq k$ let $x_i \in X$ such that $|x_i| = 2^i |x|$. 
Define also $x_0=x$ and $x_{k+1} = y$. Then from the triangle inequality and Lemma \ref{lem-3} we get
\begin{align*}
d_{\rho}(x,y)
&\leq \sum_{i=1}^{k+1} d_{\rho}(x_i,x_{i-1})
\leq \sum_{i=1}^{k+1} C_2 \rho(|x_{i-1}|) d(x_i,x_{i-1})
\\
&\leq \sum_{i=1}^{k+1} C_2 \rho(2^{i-1} |x|) 3 \cdot 2^{i-1} |x|
\leq 3 C_2 C_A \int_{|x|}^{2^{k+1} |x|} \rho(t) dt
\\
&\leq 3 C_2 C_A C_B \rho(|x|)(|x|+1),
\end{align*}
where we also used Conditions \ref{rhodoubling} and \ref{upperbound}.
As $C_2=C_U\, C_A^{3+\log_2(C_U)}$, we obtain the desired estimate.
\end{proof}

\begin{lem}
\label{lem-11}
Let $x,y \in X$ such that $2^k |x| \leq |y|$ for some $k \geq 1$.
Let $\gamma$ be a $C_U$-uniform curve in $(X,d)$ connecting $x$ to $y$ and assume that $\gamma$ is parametrized by arc-length with respect to the metric $d$. 
For every $0 \leq i \leq k-1$ we define the number $0 \leq t_i < \ell_d(\gamma)$ such that the point $x_i := \gamma(t_i)$ is the first point on the curve, starting from $x$, such that $|x_i| = 2^i |x|$. We also set $t_k=\ell_d(\gamma)$ and $x_k = y$. Then for every $1 \leq i \leq k$ the subcurve $\gamma\vert_{[t_{i-1},t_i]}$ is $C_U(2C_U+3)$-uniform in $(X,d)$.
\end{lem}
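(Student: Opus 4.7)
My plan is to verify both defining conditions of $C_U(2C_U+3)$-uniformity for each subcurve $\gamma\vert_{[t_{i-1},t_i]}$. The twisted cone condition is essentially inherited for free: if $z=\gamma(t)$ with $t\in[t_{i-1},t_i]$, then the subcurve lengths satisfy $\ell_d(\gamma\vert_{[t_{i-1},t]})\le\ell_d(\gamma\vert_{[0,t]})=\ell_d(\gamma_{xz})$ and $\ell_d(\gamma\vert_{[t,t_i]})\le\ell_d(\gamma\vert_{[t,\ell_d(\gamma)]})=\ell_d(\gamma_{zy})$. Hence the twisted cone condition for $\gamma$ yields the analogue for the subcurve with the same constant $C_U$, which is smaller than $C_U(2C_U+3)$.

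The real work is the quasiconvexity inequality $t_i-t_{i-1}\le C_U(2C_U+3)\,d(x_{i-1},x_i)$. The lower bound $d(x_{i-1},x_i)\ge 2^{i-1}|x|$ follows from $d(x_{i-1},x_i)\ge|x_i|-|x_{i-1}|$ for $1\le i\le k-1$, while for $i=k$ the hypothesis $|y|\ge 2^k|x|$ gives $|x_{k-1}|=2^{k-1}|x|\le|y|/2$, so $d(x_{k-1},y)\ge|y|/2\ge 2^{k-1}|x|$. The case $i=k$ is then immediate from quasiconvexity of $\gamma$, since $t_k-t_{k-1}\le\ell_d(\gamma)\le C_Ud(x,y)\le C_U(|x|+|y|)\le \tfrac{3C_U}{2}|y|\le 3C_Ud(x_{k-1},y)$.

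For $1\le i\le k-1$ the key observation, coming from the minimality in the definition of $t_i$ and the intermediate value theorem applied to the continuous function $t\mapsto|\gamma(t)|$, is that $|\gamma(t)|\le 2^i|x|$ for every $t\in[0,t_i]$. I then case-split on the location of the midpoint $t^*:=\ell_d(\gamma)/2$ of the full curve. If $t_i\le t^*$, then $\min\{t_i,\ell_d(\gamma)-t_i\}=t_i$, and the twisted cone of $\gamma$ at $x_i$ yields $t_i\le C_Ud_X(x_i)\le C_U|x_i|=C_U\cdot 2^i|x|$. If $t_{i-1}\ge t^*$, then symmetrically the twisted cone at $x_{i-1}$ yields $\ell_d(\gamma)-t_{i-1}\le C_U\cdot 2^{i-1}|x|$. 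The delicate case is $t_{i-1}<t^*<t_i$, where neither endpoint alone controls the subcurve length; here $\gamma(t^*)$ lies on the subcurve with $|\gamma(t^*)|\le 2^i|x|$ by the key observation, and applying the twisted cone at $\gamma(t^*)$ gives $\ell_d(\gamma)/2\le C_U|\gamma(t^*)|\le C_U\cdot 2^i|x|$, whence $\ell_d(\gamma)\le 2C_U\cdot 2^i|x|$. In all three cases $t_i-t_{i-1}\le 4C_U\cdot 2^{i-1}|x|\le 4C_U\,d(x_{i-1},x_i)$, and $4C_U\le C_U(2C_U+3)$ since $C_U\ge 1$, completing the proof.
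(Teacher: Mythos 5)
Your proof is correct, and while it shares the paper's skeleton (the twisted cone condition passes to the subcurves verbatim with constant $C_U$, the lower bound $d(x_{i-1},x_i)\ge 2^{i-1}|x|$, and a separate treatment of $i=k$), the quasiconvexity step is organized differently. The paper splits on whether $\ell_d(\gamma\vert_{[0,t_i]})\le\ell_d(\gamma\vert_{[t_i,\ell_d(\gamma)]})$: in the easy case the twisted cone at $x_i$ gives $\ell_d(\gamma\vert_{[0,t_i]})\le C_U|x_i|$, and in the hard case it first extracts $|y|\le(1+C_U)|x_i|$ from the twisted cone at $x_i$ and then bounds the whole curve by quasiconvexity, $\ell_d(\gamma)\le C_U d(x,y)\le C_U\bigl(|x|+(1+C_U)2^i|x|\bigr)$, which is exactly where the constant $C_U(2C_U+3)$ comes from. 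You instead split on the position of the arclength midpoint $t^*=\ell_d(\gamma)/2$ relative to $[t_{i-1},t_i]$, and in the delicate case $t_{i-1}<t^*<t_i$ you use the minimality in the definition of $t_i$ (via the intermediate value theorem) to get $|\gamma(t^*)|\le 2^i|x|$ and then apply the twisted cone at the midpoint, obtaining $\ell_d(\gamma)\le 2C_U\cdot 2^i|x|$ without invoking quasiconvexity of $\gamma$ at all for $1\le i\le k-1$. Your route buys a sharper conclusion: the subcurves are in fact $4C_U$-uniform, hence $C_U(2C_U+3)$-uniform since $C_U\ge 1$. The paper's route, in turn, never uses the ``first point'' property of the $x_i$, so it works for arbitrary points on $\gamma$ at the prescribed distances from $b$ taken in order along the curve, whereas your middle case genuinely needs that minimality --- which is, of course, available under the lemma's hypotheses.
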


\begin{proof}
The twisted cone condition is clearly satisfied, because for every $t_{i-1} \leq t \leq t_i$ we have
\begin{equation*}
\min \{ \ell_d(\gamma\vert_{[t_{i-1} , t]}) , \ell_d(\gamma\vert_{[t,t_i]}) \}
\leq \min \{ \ell_d(\gamma\vert_{[0,t]}) , \ell_d(\gamma\vert_{[t,\ell_d(\gamma)]}) \}
\leq C_U d_X(\gamma(t)),
\end{equation*}
since $\gamma$ is $C_U$-uniform.

For quasiconvexity assume first that $\ell_d(\gamma\vert_{[0,t_i]}) \leq \ell_d(\gamma\vert_{[t_i,\ell_d(\gamma)]})$. Then $i \neq k$ and thus
\begin{equation*}
\ell_d(\gamma\vert_{[t_{i-1},t_i]})
\leq \ell_d(\gamma\vert_{[0,t_i]})
\leq C_U d_X(x_i)
\leq C_U |x_i|
\leq 2C_U d(x_{i-1},x_i),
\end{equation*}
so the curve is $2C_U$-quasiconvex.

Now assume that $\ell_d(\gamma\vert_{[0,t_i]}) > \ell_d(\gamma\vert_{[t_i,\ell_d(\gamma)]})$ and $i \neq k$. Then
\begin{equation*}
C_U |x_i|
\geq C_U d_X(x_i)
\geq \ell_d(\gamma\vert_{[t_i,\ell_d(\gamma)]})
\geq d(x_i,y)
\geq |y| - |x_i|
\end{equation*}
and therefore $|y| \leq (1+C_U)|x_i|$. Then we have
\begin{align*}
\ell_d(\gamma\vert_{[t_{i-1},t_i]})
&\leq \ell_d(\gamma)
\leq C_U d(x,y)
\leq C_U (|x|+|y|)
\leq C_U (|x|+(1+C_U)|x_i|)
\\
&= C_U (|x| + (1+C_U) 2^i |x|)
\leq C_U(2C_U+3) d(x_{i-1},x_i).
\end{align*}

Finally if $i = k$, we have $|y| \leq d(x_{k-1},x_k) + |x_{k-1}| \leq 2 d(x_{k-1},x_k)$ and therefore
\begin{equation*}
\ell_d(\gamma\vert_{[t_{k-1},t_k]})
\leq C_U (|x|+|y|)
\leq 2 C_U |y|
\leq 4 C_U d(x_{k-1},x_k).
\qedhere
\end{equation*}

\end{proof}

\begin{lem}
\label{lem-12}
Let $x , y \in X$ such that $|x| \leq |y|$ and let $\gamma$ be a $C_U$-uniform curve in $(X,d)$ connecting $x$ to $y$. 
Suppose that $\gamma$ is parametrized by arc-length with respect to the metric $d$. 
Then for every $t$ and $t'$ with $0 \leq t \leq t' \leq \ell_d(\gamma)$ we have $\rho(|\gamma(t')|) \simle \rho(|\gamma(t)|)$ with the comparison constant depending on $C_U$, $C_A$ and $C_B$.
\end{lem}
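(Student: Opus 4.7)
The plan is to prove the estimate by decomposing $\gamma$ via Lemma \ref{lem-11} into $C_U'$-uniform subcurves (with $C_U' := C_U(2C_U+3)$) on which the distance to $b$ grows by a factor of $2$ between consecutive landmark points. On each such subcurve, $\rho(|\cdot|)$ is comparable to $\rho$ evaluated at the starting landmark, and the sequence of landmark values is almost decreasing since $\rho$ is quasidecreasing (Remark \ref{remark-1}(ii)). This reduces the claim to comparing $\rho$ at landmarks with increasing $|\cdot|$.

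If $|y| \le 2|x|+1$, then $\tfrac12(|x|+1)\le |y|+1\le 2(|x|+1)$ and Lemma \ref{lem-14} applied to $\gamma$ itself gives $\rho(|z|)\asymp \rho(|x|)$ for every $z\in\gamma$, so in particular $\rho(|\gamma(t')|)\simle \rho(|\gamma(t)|)$. Otherwise $|y|>2|x|$; let $k\ge 1$ be maximal with $2^k|x|\le |y|$, so $|y|<2^{k+1}|x|$. Applying Lemma \ref{lem-11}, I obtain landmark points $x=x_0,x_1,\dots,x_{k-1},x_k=y$ on $\gamma$ with $|x_i|=2^i|x|$ for $0\le i\le k-1$, and subcurves $\gamma_i:=\gamma|_{[t_{i-1},t_i]}$ that are each $C_U'$-uniform. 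Let $i,j$ be the indices with $\gamma(t)\in \gamma_i$ and $\gamma(t')\in \gamma_j$; since the subcurves are traversed in order, $i\le j$.

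The key step is to establish $\rho(|z|)\asymp \rho(|x_{i-1}|)$ on $\gamma_i$ with constants depending only on $C_U$, $C_A$, $C_B$. For interior subcurves ($1\le i\le k-1$), the relation $|x_i|=2|x_{i-1}|$ yields $\tfrac12(|x_{i-1}|+1)\le |x_i|+1\le 2(|x_{i-1}|+1)$, so Lemma \ref{lem-14} applied to $\gamma_i$ delivers the comparison immediately. For the final subcurve $\gamma_k$, the ratio $|y|/|x_{k-1}|$ may be as large as $4$, so Lemma \ref{lem-14} does not apply verbatim; this is the main technical obstacle. To dispatch it, I apply Lemma \ref{lem-5} to $\gamma_k$ (with uniformity constant $C_U'$) to get
\begin{equation*}
\tfrac{|x_{k-1}|}{1+C_U'}\le |z|\le (1+C_U')|x_{k-1}|+C_U'|y|\le (1+5C_U')|x_{k-1}|
\end{equation*}
for $z\in \gamma_k$, and then combine \eqref{eq:power-doubling} with the quasidecreasingness of $\rho$ a bounded (depending on $C_U'$) number of times to conclude $\rho(|z|)\asymp \rho(|x_{k-1}|)$.

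Combining the above, and using that $|x_{j-1}|\ge |x_{i-1}|$ together with the quasidecreasingness of $\rho$ (constant $\cqd=C_AC_B$), I conclude
\begin{equation*}
\rho(|\gamma(t')|)\simle \rho(|x_{j-1}|)\le \cqd\,\rho(|x_{i-1}|)\simle \rho(|\gamma(t)|),
\end{equation*}
with the overall constant depending only on $C_U$, $C_A$, and $C_B$, as required.
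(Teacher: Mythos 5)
Your proof is correct and follows essentially the same route as the paper's: split off the case $|y|\simeq |x|$ via Lemma~\ref{lem-14}, decompose $\gamma$ with Lemma~\ref{lem-11}, apply Lemma~\ref{lem-14} on the subcurves, and finish by comparing the landmark values $\rho(2^i|x|)$ using the quasidecreasingness of $\rho$. The only deviation is your separate treatment of the final subcurve (where the endpoint ratio can approach $4$, so the hypothesis of Lemma~\ref{lem-14} need not hold verbatim) via Lemma~\ref{lem-5}, Condition~\ref{condA} and quasidecreasingness; the paper applies Lemma~\ref{lem-14} to all subcurves without comment, so your version is, if anything, slightly more careful on this minor point.
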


\begin{proof}
If $|y| \leq 2|x|$, then by Lemma \ref{lem-14} we have $\rho(|z|) \simeq \rho(|x|)$ for every $z \in \gamma$ and so the claim holds.
If $|y| \geq 2|x|$, then let $k$ be the positive integer such that $2^k|x| \leq |y| < 2^{k+1}|x|$. We divide $\gamma$ into subcurves $\gamma\vert_{[t_{i-1},t_i]}$ the same way as in Lemma \ref{lem-11}. Assume that $t_{i-1} \leq t \leq t_i$ and $t_{j-1} \leq t' \leq t_j$ with $1 \leq i \leq j \leq k$.
According to Lemma \ref{lem-11} the curve $\gamma\vert_{[t_{i-1},t_i]}$ is $C_U(2C_U+3)$-uniform for every $1 \leq i \leq k$. 
Therefore we get from Lemma \ref{lem-14} that $\rho(|\gamma(t)|) \simeq \rho(2^i |x|)$ and similarly $\rho(|\gamma(t')|) \simeq \rho(2^j |x|)$.
Finally from the quasidecreasingness of $\rho$ we get $\rho(|\gamma(t')|) \simeq \rho(2^j|x|) \simle \rho(2^i|x|) \simeq \rho(|\gamma(t)|)$.
\end{proof}

Now we are ready to prove the main result of this section, which also completes the proof of Theorem~\ref{maintheorem}(a).

\begin{thm}
\label{thm-uniformity}
Assume that $(X,d)$ is a $C_U$-uniform space and Conditions \ref{rhodoubling} and \ref{upperbound} hold.
Then $(X,d_{\rho})$ is a uniform space with uniformity constant $C_{U,\rho}$ that depends only on $C_U$, $C_A$ and $C_B$.
Also any point $x \in X$ can be connected to the point $\infty$ with a curve that is $C_{U,\rho}$-uniform in the space $(X,d_{\rho})$.
\end{thm}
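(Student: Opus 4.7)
The plan is the following: given $x,y\in X$ with, without loss of generality, $|x|\le|y|$, take a $C_U$-uniform curve $\gamma$ in $(X,d)$ from $x$ to $y$, and show that this same $\gamma$ is $C_{U,\rho}$-uniform in $(X,d_\rho)$, with $C_{U,\rho}$ depending only on $C_U$, $C_A$ and $C_B$. The second claim of the theorem (connecting any $x\in X$ to $\infty$ by a $C_{U,\rho}$-uniform curve in $(X,d_\rho)$) will then follow by applying Theorem~\ref{thm-extension} to the space $(X,d_\rho)$, which is locally compact, rectifiably path-connected, and incomplete in view of Propositions~\ref{homeo} and~\ref{rectifiability} and Theorem~\ref{thm-9}.

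As a preliminary, I would establish the uniform lower bound
\[
d_{X,\rho}(z) \simge \rho(|z|)\, d_X(z), \qquad z\in X,
\]
with implicit constant depending on $C_A$. The key observation is that any curve $\beta$ in $X$ starting at $z$ and reaching (or, in the case of $\infty$, approaching in $d_\rho$) a point of $\partial_\rho X = \partial X\cup\{\infty\}$ must contain an initial $d$-segment of $d$-length at least $d_X(z)/2$ that lies inside $B(z, d_X(z)/2)$. On this segment every point $z'$ satisfies $|z'|\in[|z|/2,3|z|/2]$, so Condition~\ref{rhodoubling} gives $\rho(|z'|)\simge \rho(|z|)$, and integrating yields the claimed lower bound.

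For the main argument I split into two cases. When $|y|+1\le 2(|x|+1)$, Lemmas~\ref{lem-14} and~\ref{lem-3} give $\rho(|\cdot|)\simeq \rho(|x|)$ along $\gamma$ and $d_\rho(x,y)\simeq \rho(|x|)\, d(x,y)$; then quasiconvexity ($\ell_\rho(\gamma)\simle \rho(|x|)\ell_d(\gamma)\simle d_\rho(x,y)$) and the twisted cone (by the same estimate applied to subarcs, combined with the original twisted cone and the preliminary lower bound) follow directly. When $|y|\ge 2|x|+1$, I would use the dyadic decomposition of Lemma~\ref{lem-11}: fix $k$ with $2^{k-1}|x|\le|y|<2^k|x|$ and points $x_j\in\gamma$ with $|x_j|=2^j|x|$, so that each subcurve $\gamma_j$ is $C_U(2C_U+3)$-uniform in $(X,d)$ and Lemmas~\ref{lem-5} and~\ref{lem-14} yield $\rho(|\cdot|)\simeq \rho(2^{j-1}|x|)$ on $\gamma_j$. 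Quasiconvexity in $(X,d_\rho)$ follows by telescoping the $\rho$-integrals over the $\gamma_j$, comparing the resulting sum with $\int_{|x|}^{\infty}\rho(t)\,dt$ using Condition~\ref{rhodoubling}, invoking Condition~\ref{upperbound} through Remark~\ref{remark-1}(i), and identifying the result with $d_\rho(x,y)$ via Lemma~\ref{lem-4}.

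The hard part is the twisted cone in this second case, which requires a subcase analysis driven by the original twisted cone $\min\{\ell_d(\gamma_{xz}),\ell_d(\gamma_{zy})\}\le C_U d_X(z)$. If $\gamma_{zy}$ is the shorter $d$-piece, Lemma~\ref{lem-12} gives $\rho(|\cdot|)\simle \rho(|z|)$ on $\gamma_{zy}$, so $\ell_\rho(\gamma_{zy})\simle \rho(|z|)\ell_d(\gamma_{zy})\simle \rho(|z|)\, d_X(z)\simle d_{X,\rho}(z)$. Otherwise $\gamma_{xz}$ is the shorter $d$-piece, and I would split further by the scale of $|z|$: if $|z|$ is within a $C_U$-dependent constant of $|x|$, then iterating Condition~\ref{rhodoubling} gives $\rho(|x|)\simeq \rho(|z|)$ and the analogous Lemma~\ref{lem-12} bound on $\gamma_{xz}$ works; if instead $|z|$ is much larger than $|x|$, then $d(x,z)\ge|z|-|x|\simge|z|$ forces $d_X(z)\simge |z|$, so $d_{X,\rho}(z)\simge (|z|+1)\rho(|z|)$, and the same dyadic telescoping (summing only over $j>i$, where $z\in\gamma_i$) gives $\ell_\rho(\gamma_{zy})\simle (|z|+1)\rho(|z|)\simle d_{X,\rho}(z)$. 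This completes the proof that $\gamma$ is $C_{U,\rho}$-uniform in $(X,d_\rho)$.
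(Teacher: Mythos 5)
Your proposal is correct, and its skeleton is the same as the paper's: push the same $C_U$-uniform $d$-curve, treat the scales $|y|+1\le 2(|x|+1)$ and $|y|\ge 2|x|+1$ separately, prove quasiconvexity via the dyadic decomposition of Lemma~\ref{lem-11} together with Lemmas~\ref{lem-14}, \ref{lem-3}, \ref{lem-4} and Condition~\ref{upperbound}, and obtain the curve to $\infty$ from Theorem~\ref{thm-extension}. Where you genuinely differ is the twisted cone. The paper fixes a near-minimizing boundary point $z'\in\partial_\rho X$ and splits into cases according to the relative sizes of $|z|$, $|z'|$ and $d(z,z')$, using Lemmas~\ref{lem-3} and~\ref{lem-4} to compare $d_\rho(z,z')$ with $\rho(|z|)(|z|+1)$ or $\rho(|z|)d(z,z')$; you instead isolate the standalone inequality $d_{X,\rho}(z)\simge \rho(|z|)\,d_X(z)$ (not stated in the paper) and then case-split on which side of the $d$-twisted cone is shorter, using Lemma~\ref{lem-12} on the short side and the dyadic telescoping bound $\ell_\rho(\gamma_{zy})\simle (|z|+1)\rho(|z|)$ when $|z|\gg|x|$. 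Your initial-segment proof of that inequality is sound: $d_X(z)\le |z|$ since $b\in\partial X$, so Condition~\ref{condA} controls $\rho$ on $B(z,d_X(z)/2)$; near-optimal curves to a point of $\partial X$ must leave this ball because $d_\rho$-convergence to such a point forces $d$-convergence (this is implicit in the proof of Theorem~\ref{thm-9}), and the $\infty$-part is Lemma~\ref{lem-4}. This packaging is arguably cleaner and the lemma is of independent use, but it is weaker than the bounds $d_{X,\rho}(z)\simge\rho(|z|)(|z|+1)$ that the paper extracts in its first three cases, so you must recover the factor $|z|+1$ from $d_X(z)\simge|z|$ in your last subcase; for that you also need $|z|\simge 1$, which is automatic if the dichotomy is phrased additively, say $|z|+1\le K(|x|+1)$ versus $|z|+1>K(|x|+1)$ with $K=K(C_U)$ (the first alternative still gives $\rho(|z|)\simeq\rho(|x|)$ by iterating Condition~\ref{condA}, since $|z|\ge |x|/(1+C_U)$ by Lemma~\ref{lem-5}), while with a purely multiplicative threshold one should note that $|z|\le 1$ forces $|x|\le(1+C_U)|z|\le 1+C_U$, hence $\rho(|x|)\simeq\rho(|z|)$ and that situation falls under your first subcase anyway. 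With this small bookkeeping fixed, all cases are covered and the constants depend only on $C_U$, $C_A$ and $C_B$, as required.
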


\begin{proof}
Note that by Theorem \ref{thm-9} we have $\partial_{\rho} X = \partial X \cup \{ \infty \}$ and therefore $(X,d_{\rho})$ is not complete.
From Proposition \ref{homeo} we get that $(X,d_{\rho})$ is locally compact because $(X,d)$ is locally compact.
What is left to show is that any two points $x , y \in X$ can be connected by a curve $\gamma$ that is $C_{U,\rho}$-uniform in $(X,d_{\rho})$. Then we get the final claim from Theorem \ref{thm-extension}.

Let us assume by symmetry that $|x| \leq |y|$. Let $\gamma$ be a $C_U$-uniform curve in $(X,d)$ connecting $x$ to $y$. Without loss of generality we may assume that $\gamma$ does not cross itself. From Propositions \ref{homeo} and \ref{rectifiability} we get that $\gamma$ is also a curve in $(X,d_{\rho})$. We shall show that $\gamma$ is uniform in $(X,d_{\rho})$ with uniformity constant depending only on $C_U$, $C_A$ and $C_B$.

From Remark \ref{remark-1}(ii) we get that $\rho$ is quasidecreasing.
If $|y| \geq 2|x| + 1$, let $k$ be the positive integer such that $2^k |x| \leq |y| < 2^{k+1}|x|$. Similar to Lemma \ref{lem-11}, we define $x_0=x$, $x_k=y$ and for every $1 \leq i \leq k-1$ the point $x_i \in \gamma$ as the first point on the curve such that $|x_i| = 2^i |x|$.
Let us prove first that the curve is quasiconvex.

If $|y| \leq 2|x| + 1$, then from Lemma \ref{lem-14} we get that $\rho(|z|) \simeq \rho(|x|)$ for every $z \in \gamma$. Thus by \eqref{rholength}, the uniformity of $\gamma$ and Lemma \ref{lem-3}
\begin{equation*}
\ell_{\rho}(\gamma)
\leq \int_{\gamma} \rho(|\cdot|)ds
\simeq \rho(|x|) \ell_d(\gamma)
\simeq \rho(|x|) d(x,y)
\simeq d_{\rho}(x,y).
\end{equation*}

Now assume that $|y| \geq 2|x| + 1$.
According to Lemma \ref{lem-11} the curve $\gamma_{x_{i-1} x_i}$ is $C_U(2C_U+3)$-uniform for every $1 \leq i \leq k$. 
Therefore we get from Lemma \ref{lem-14} that $\rho(|z|) \simeq \rho(2^i |x|)$ for every $z \in \gamma_{x_{i-1} x_i}$.
Therefore
\begin{align*}
\ell_{\rho}(\gamma)
&\leq \int_{\gamma} \rho(|\cdot|)ds
= \sum_{i=1}^k \int_{\gamma_{x_{i-1} x_i}} \rho(|\cdot|)ds
\simeq \sum_{i=1}^k \rho(2^i |x|) \ell_d(\gamma_{x_{i-1} x_i})
\\
&\simeq \sum_{i=1}^k \rho(2^i |x|) d(x_{i-1},x_i)
\simeq \sum_{i=1}^k \rho(2^i |x|) 2^{i-1} |x|
\leq C_A \int_{|x|}^{2^k |x|} \rho(t) dt
\\
&\leq C_A C_B (|x|+1) \rho(|x|)
\simeq d_{\rho}(x,y).
\end{align*}
Here we used the property that $\gamma_{x_{i-1} x_i}$ is $C_U(2C_U+3)$-quasiconvex alongside~\eqref{rholength}, Conditions~\ref{rhodoubling} and~\ref{upperbound}, and Lemma~\ref{lem-4}.
Thus we have shown the quasiconvexity.

Now we prove the twisted cone condition. Let $z \in \gamma$ such that $y \neq z \neq x$. Then $z \neq \infty$ and also $d_X(z) > 0$ and therefore $d_{X , \rho}(z) > 0$. Let $z' \in \partial_{\rho} X$ such that $d_{\rho}(z,z') \simeq d_{X , \rho}(z)$.

First we make some estimates for $\ell_{\rho}(\gamma_{zy})$.
If $|y| \leq 2|x| + 1$, then by Lemma~\ref{lem-14} we have $\rho(|z|) \simeq \rho(|x|)$ and therefore 
\begin{equation*}
\ell_{\rho}(\gamma_{zy})
\leq \ell_{\rho}(\gamma)
\leq \int_{\gamma} \rho(|\cdot|)ds
\simeq \rho(|z|) \ell_d(\gamma)
\simeq \rho(|z|) d(x,y)
\simle \rho(|z|) (|z|+1),
\end{equation*}
where we used \eqref{rholength} and Lemma~\ref{lem-5}.
If $|y| \geq 2|x| + 1$ and we assume that $z \in \gamma_{x_{i-1} x_i}$, then
\begin{align*}
\ell_{\rho}(\gamma_{zy})
&\leq \int_{\gamma_{zy}} \rho(|\cdot|)ds
\leq \int_{\gamma_{x_{i-1} y}} \rho(|\cdot|)ds
= \sum_{j=i}^k \int_{\gamma_{x_{j-1} x_j}} \rho(|\cdot|)ds
\\
&\simeq \sum_{j=i}^k \ell_d(\gamma_{x_{j-1} x_j}) \rho(2^j|x|)
\simle \int_{2^{i-1} |x|}^{2^k |x|} \rho(t) dt
\leq C_B (2^{i-1} |x| + 1) \rho(2^{i-1}|x|)
\\
&\simeq \rho(|z|)(|z|+1),
\end{align*}
where we used Conditions~\ref{rhodoubling},~\ref{upperbound}, \eqref{rholength}, and Lemmas~\ref{lem-5},~\ref{lem-14}, and~\ref{lem-11}.

Now if $|z| > 2 |z'| + 1$, then $d_{\rho}(z,z') \simeq \rho(|z'|) (|z'|+1) \simge \rho(|z|)(|z|+1)$, where we used Lemma~\ref{lem-4} and Remark~\ref{remark-1}(ii).
If $|z'| > 2 |z| + 1$, or $z' = \infty$, then 
\[
d_{\rho}(z,z') \simeq \rho(|z|) (|z|+1), 
\]
where we used Lemma~\ref{lem-4}.
If $|z| \leq 2 |z'| + 1$, $|z'| \leq 2 |z| + 1$ and $d(z,z') \geq \frac{|z|+1}{2C_U}$, then Lemma \ref{lem-3} gives us that 
\[
d_{\rho}(z,z') \simeq \rho(|z|) d(z,z') \simge \rho(|z|) (|z|+1).
\]
In all these cases we get $d_{X , \rho}(z) \simge \rho(|z|)(|z|+1) \simge \ell_{\rho}(\gamma_{zy})$.

Finally let us assume that $|z| \leq 2 |z'| + 1$, $|z'| \leq 2 |z| + 1$, and $d(z,z') \leq \frac{|z|+1}{2C_U}$. In this case Lemma~\ref{lem-3} gives us that $d_{\rho}(z,z') \simeq \rho(|z|) d(z,z')$.
If $\ell_d(\gamma_{xz}) \leq \ell_d(\gamma_{zy})$, then
\begin{equation*}
|z| - |x|
\leq d(x,z)
\leq \ell_d(\gamma_{xz})
\leq C_U d_X(z)
\leq C_U d(z,z')
\leq \frac{|z|+1}{2},
\end{equation*}
and therefore $|z| \leq 2|x|+1$.
This together with Lemma \ref{lem-5} and \ref{rhodoubling} implies that $\rho(|x|) \simeq \rho(|z|)$ and then by~\eqref{rholength} and Lemma \ref{lem-12}
\begin{equation*}
\ell_{\rho}(\gamma_{xz})
\leq \int_{\gamma_{xz}} \rho(|\cdot|)ds
\simle \rho(|x|) \ell_d(\gamma_{xz})
\simle \rho(|z|) d_X(z)
\leq \rho(|z|) d(z,z').
\end{equation*}
Assume now that $\ell_d(\gamma_{xz}) \geq \ell_d(\gamma_{zy})$. Then we get from \eqref{rholength} and Lemma \ref{lem-12} that
\begin{equation*}
\ell_{\rho}(\gamma_{zy})
\leq \int_{\gamma_{zy}} \rho(|\cdot|)ds
\simle \rho(|z|) \ell_d(\gamma_{zy})
\simle \rho(|z|) d_X(z)
\leq \rho(|z|) d(z,z').
\end{equation*}
This completes the proof.
\end{proof}

\section{The doubling property of \texorpdfstring{$\mu_{\rho}$}{}}
\label{sect-doubling}

The focus of this section is to consider the doubling properties of the measure $\mu$ on $(X,d)$ and the measure $\mu_{\rho}$ on $(X,d_\rho)$ as given in Definition \ref{murhodef}.
We point out here that there are \emph{two} parameters involved in this transformation; the density function $\rho$ and the index $\sigma>0$. It turns out that the precise value of $\sigma$ does not interfere with the doubling property of $\mu_\rho$, but does influence the constant associated with the doubling condition, see Theorem~\ref{thm-doubling}.

Throughout this section, $(X,d)$ is assumed to be a $C_U$-uniform space equipped with a Borel regular measure $\mu$ that is \emph{doubling} i.e. there is a constant $C_\mu > 1$ such that
\begin{equation*}
    0 < \mu(B(x,2r)) \le C_\mu \mu(B(x,r)) < \infty
\quad \text{for all $x \in X$ and $r>0$}.
\end{equation*}
By a standard argument this inequality holds also for $x \in \partial X$.
The notation $B$ is used for balls with respect to the metric $d$ while $B_{\rho}$ is used for balls with respect to the metric $d_{\rho}$:
\begin{equation*}
B_\rho(x,r) := \{y\in X: d_\rho(x,y)<r\}
\end{equation*}
for $x \in \overline X^{d_{\rho}}$ and $r>0$.
In addition we assume that Conditions \ref{rhodoubling} and \ref{upperbound} hold.
\begin{deff}
\label{murhodef}
We define a measure $\mu_{\rho}$ to the space $(X,d_\rho)$ by setting
\begin{equation*}
\mu_{\rho}(A):=\int_A \rho(|x|)^\s d\mu(x),
\end{equation*}
for any $\mu$-measurable set $A \subset X$. Here $\s$ is a positive parameter.
\end{deff}
The goal of this section is to show that the measure $\mu_{\rho}$ is doubling in $(X,d_{\rho})$.
We first show the necessity of \ref{condition-C} in obtaining the doubling property of $\mu_\rho$.

\subsection{Preliminary results and necessity of Condition \texorpdfstring{\ref{condition-C}}{}}
\label{sect-necessityC}

In this subsection we first prove some preliminary results that don't require Condition \ref{condition-C}. Then we prove that Condition~\ref{condition-C} is necessary for preserving the doubling property of the measure. In the next subsection we make Condition~\ref{condition-C} a standing assumption and show that $\mu_{\rho}$ is doubling.

To shorten notations let's define $h\colon (0,\infty)\to (0,\infty)$ by setting $h(t):=(t+1)\rho(t)$. By Remark~\ref{remark-1}(ii) $h$ is quasidecreasing with constant $C_A C_B$ and by Remark \ref{remark-1}(i) we have $\lim_{t \to \infty} h(t) = 0$. The inverse of $h$
\[
h^{-1}(\tau):=\inf \{t > 0: h(t)\le \tau\},\quad \tau>0
\]
will be very valuable for us. Notice that $h^{-1}$ is monotone decreasing and unbounded.
For future reference fix $\tau_1 = \rho(1) / C_A$. Thus $h^{-1}(\tau_1)\ge 1$ and, when $\tau\in(0,\tau_1]$, we know that $h^{-1}(\tau)\ge 1>0$.
From Lemma \ref{BaloghLem} we get $h^{-1}(\tau_1) \leq 2(2C_A^2 C_B)^{1 / \eps} - 1$ and therefore $h^{-1}(\tau_1) \simeq 1$.

\begin{lem}\label{h-1doubling}
    For $\tau\in (0,\tau_1]$, we have
    \[
    h^{-1}(\tau)\le h^{-1}(\tau/2)\simle h^{-1}(\tau),
    \]
    where the comparison constant only depends on $C_A$ and $C_B$.
\end{lem}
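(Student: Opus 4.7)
The plan is to handle the two inequalities separately. The first, $h^{-1}(\tau)\le h^{-1}(\tau/2)$, is immediate from the definition of $h^{-1}$ as an infimum: since $\tau/2\le\tau$, the set $\{t>0:h(t)\le\tau/2\}$ is contained in $\{t>0:h(t)\le\tau\}$, and taking infima reverses inclusion.

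For the nontrivial inequality $h^{-1}(\tau/2)\simle h^{-1}(\tau)$, the strategy is to show that there exists a constant $C\ge 1$, depending only on $C_A$ and $C_B$, such that whenever $t\ge 1$ satisfies $h(t)\le\tau$, then $h(Ct)\le\tau/2$. Given this, for any $\delta>0$ we can choose $t\in[h^{-1}(\tau),h^{-1}(\tau)+\delta]$ with $h(t)\le\tau$ (using the defining infimum), note that $t\ge h^{-1}(\tau)\ge h^{-1}(\tau_1)\ge 1$ because $\tau\le\tau_1$, and conclude that $Ct$ lies in the set defining $h^{-1}(\tau/2)$. Hence $h^{-1}(\tau/2)\le C(h^{-1}(\tau)+\delta)$, and letting $\delta\to 0$ gives the claim.

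The key ingredient for producing $C$ is Lemma~\ref{BaloghLem}, which, rewritten in terms of $h(t)=(t+1)\rho(t)$, states that for $0<r\le s$,
\[
(s+1)^{\eps}h(s)\le C_AC_B\,(r+1)^{\eps}h(r),\qquad \eps=\tfrac{1}{C_AC_B}.
\]
Applying this with $r=t$ and $s=Ct$ yields
\[
h(Ct)\le C_AC_B\left(\frac{t+1}{Ct+1}\right)^{\!\eps}h(t).
\]
Since $t\ge 1$ gives $(t+1)/(Ct+1)\le 2/C$, we obtain $h(Ct)\le C_AC_B(2/C)^{\eps}h(t)$. Choosing $C=2(2C_AC_B)^{1/\eps}$ makes the prefactor at most $1/2$, so $h(Ct)\le h(t)/2\le\tau/2$, as required.

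No step presents a serious obstacle; the only thing to watch is that $h$ need not be continuous, so one cannot simply write $h(h^{-1}(\tau))=\tau$. This is handled by the $\delta$-approximation of the infimum described above. The resulting comparison constant depends only on $C_A$ and $C_B$ through $\eps$ and the prefactor $2(2C_AC_B)^{1/\eps}$.
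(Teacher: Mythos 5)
Your proof is correct and takes essentially the same route as the paper: both arguments hinge on Lemma~\ref{BaloghLem} together with the fact that $h^{-1}(\tau)\ge 1$ for $\tau\in(0,\tau_1]$, and produce the same comparison constant $2(2C_AC_B)^{1/\eps}$. The only difference is cosmetic --- the paper argues contrapositively at points just below $h^{-1}(\tau/2)$, while you push a near-infimum point for $h^{-1}(\tau)$ upward by the factor $C$; the content is identical.
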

\begin{proof}
The first inequality is trivial. For the second fix $\tau\in (0,\tau_1]$. Let $\eps$ be as in Lemma \ref{BaloghLem}.
If $2 (2 C_A C_B)^{1/\eps} \geq h^{-1}(\tau/2)$, then the result follows trivially from $h^{-1}(\tau) \geq h^{-1}(\tau_1) \geq 1$.
Therefore assume that $2 (2 C_A C_B)^{1/\eps} \leq h^{-1}(\tau/2)$.
Let $0 < r \leq \frac{1}{2} (2 C_A C_B)^{-1/\eps} h^{-1}(\tau/2)$. Then $(2 C_A C_B)^{1/\eps}(r+1) - 1 < h^{-1}(\tau/2)$ and thus $h((2 C_A C_B)^{1/\eps}(r+1)-1) > \tau/2$. Then with Lemma \ref{BaloghLem} we get $\tau < 2h((2 C_A C_B)^{1/\eps}(r+1)-1) \leq h(r)$. Because this holds for all such $r$, we conclude that $h^{-1}(\tau) \geq \frac{1}{2} (2 C_A C_B)^{-1/\eps} h^{-1}(\tau/2)$.
\end{proof}

The following is a re-statement of the second statement of Lemma~\ref{lem-4} from the point of view of $h$. 

\begin{lem}\label{constants}
    For all $x\in X$,
    \[
    \frac{1}{C_A} h(|x|)\le d_\rho(x,\infty)\le C_1h(|x|),
    \]
    where $C_1 = 3C_U\, C_A^{4+\log_2(C_U)}\, C_B$.
\end{lem}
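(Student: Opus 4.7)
The plan is that this lemma requires no new work: it is precisely the second conclusion of Lemma~\ref{lem-4} restated in the shorthand $h(t) = (t+1)\rho(t)$ that was introduced at the beginning of this subsection. First I would recall that Lemma~\ref{lem-4} gives, for every $x \in X$,
\[
\tfrac{1}{C_A}\,\rho(|x|)(|x|+1) \;\le\; d_\rho(x,\infty) \;\le\; 3C_U\, C_A^{4+\log_2(C_U)}\, C_B\, \rho(|x|)(|x|+1).
\]
Next, by the definition $h(t) = (t+1)\rho(t)$ we have $h(|x|) = (|x|+1)\rho(|x|)$, and the constant $C_1 = 3C_U\, C_A^{4+\log_2(C_U)}\, C_B$ in the statement is exactly the upper constant appearing in Lemma~\ref{lem-4}. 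Substituting these identifications into the display yields the claimed inequalities verbatim.

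Since the entire content is an algebraic rewriting, there is no obstacle to overcome and no appeal to any additional ingredient beyond Lemma~\ref{lem-4} itself. I would not revisit the construction in the proof of Lemma~\ref{lem-4} (which used the partition into dyadic shells $|x_i| = 2^i|x|$ together with the quasiconvex-curve estimates and Conditions~\ref{rhodoubling}--\ref{upperbound}); a single sentence citing that lemma and pointing out the substitution suffices.

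The reason for recording the statement separately is purely organizational: in the doubling arguments of the next subsection, the distance $d_\rho(x,\infty)$ will repeatedly need to be compared with scales produced by $h^{-1}$, and phrasing the estimate directly as $d_\rho(x,\infty) \simeq h(|x|)$ makes the interplay with Lemma~\ref{h-1doubling} transparent. Thus the right format for the proof is a one-line derivation: apply Lemma~\ref{lem-4} and rewrite using the definitions of $h$ and $C_1$.
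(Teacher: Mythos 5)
Your proposal is correct and matches the paper exactly: the paper itself introduces Lemma~\ref{constants} with the sentence that it is a re-statement of the second part of Lemma~\ref{lem-4} in terms of $h$, and gives no further proof. Your identification of $h(|x|) = (|x|+1)\rho(|x|)$ and $C_1 = 3C_U\, C_A^{4+\log_2(C_U)}\, C_B$ with the quantities in Lemma~\ref{lem-4} is precisely what the paper intends.
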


Balls with respect to the metric $d$ and balls with respect to the metric $d_\rho$ need not be the same. This is especially true for balls in the metric $d_\rho$, centered at the new point $\infty$. The goal of the next lemma is to compare the $d_\rho$-balls centered at $\infty$ with complements of $d$-balls centered at $b$.

\begin{lem}\label{small balls at infty}
    For $0<r\le \tau_1/C_A$ we have
    $$
    X\setminus B \left( b,2h^{-1} \left( \frac{r}{2 C_1 C_A C_B} \right) \right) 
    \subset B_\rho(\infty,r) \subset X\setminus B(b,h^{-1}(C_A r)).
    $$ 
\end{lem}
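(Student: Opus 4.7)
The plan is to convert between ``$d_\rho$-closeness to $\infty$'' and ``largeness of $|x|$'' using Lemma \ref{constants}, which gives $d_\rho(x,\infty)\simeq h(|x|)$ with $h(t)=(t+1)\rho(t)$, and then apply the definition of $h^{-1}$ as an infimum. The two inclusions will be handled separately.

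For the upper inclusion $B_\rho(\infty,r)\subset X\setminus B(b,h^{-1}(C_A r))$: if $d_\rho(x,\infty)<r$, then Lemma \ref{constants} yields $h(|x|)\le C_A\, d_\rho(x,\infty)<C_A r$. In particular $|x|$ lies in the set $\{t>0:h(t)\le C_A r\}$, so by the very definition of the infimum, $|x|\ge h^{-1}(C_A r)$, proving $x\in X\setminus B(b,h^{-1}(C_A r))$. The constraint $r\le\tau_1/C_A$ ensures $C_A r\le\tau_1$, so that $h^{-1}(C_A r)$ is in the regime where the estimates are meaningful.

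For the lower inclusion $X\setminus B(b,2h^{-1}(r/(2C_1C_AC_B)))\subset B_\rho(\infty,r)$: set $s:=h^{-1}(r/(2C_1C_AC_B))$. Since $r/(2C_1 C_A C_B)\le\tau_1$ and $h^{-1}$ is monotone non-increasing, $s\ge h^{-1}(\tau_1)\ge 1$ by the choice of $\tau_1$. Suppose $|x|\ge 2s$. By the definition of $h^{-1}$ as an infimum, there is a sequence $t_n\downarrow s$ with $h(t_n)\le r/(2C_1C_AC_B)$; for large $n$, $t_n\le 2s\le|x|$. Then the quasidecreasing property of $h$ with constant $C_AC_B$ (Remark \ref{remark-1}(ii)) gives
\[
h(|x|)\le C_AC_B\, h(t_n)\le \frac{r}{2C_1}.
\]
By Lemma \ref{constants}, $d_\rho(x,\infty)\le C_1\, h(|x|)\le r/2<r$, hence $x\in B_\rho(\infty,r)$.

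The only real subtlety is that $\rho$ (and hence $h$) is merely Borel measurable, so $h$ can fail to be lower semicontinuous and we cannot directly assert $h(s)\le r/(2C_1C_AC_B)$. This is precisely why we approach $s$ from above along a sequence $t_n\downarrow s$ and then absorb the resulting gap using the quasidecreasing constant $C_AC_B$ of $h$; the factor $2$ in ``$2h^{-1}(\cdot)$'' on the left-hand side guarantees $t_n\le|x|$ for large $n$, which is what permits the application of quasidecreasingness. No further obstacles are anticipated.
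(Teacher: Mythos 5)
Your proof is correct, and the upper inclusion is argued exactly as in the paper. For the lower inclusion the paper works by contrapositive: assuming $y\notin B_\rho(\infty,r)$ gives, via Lemma~\ref{constants} and quasidecreasingness of $h$, the strict inequality $h(t)>r/(2C_1C_AC_B)$ for all $0<t\le|y|$, and hence $h^{-1}(r/(2C_1C_AC_B))\ge|y|$ directly from the infimum definition with no minimizing sequence needed; this in fact yields the stronger conclusion $\{x\in X:|x|>h^{-1}(r/(2C_1C_AC_B))\}\subset B_\rho(\infty,r)$, so the factor $2$ in the statement is not essential to the paper's argument the way it is to yours. Your direct approach (approximate the infimum from above, absorb the gap with the quasidecreasing constant) uses the same ingredients and is equally valid, so the two proofs are essentially the same in substance.
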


\begin{proof}
    Note first that $h^{-1}(C_A r)>0$, since $C_A r \le \tau_1$. Similarly, $h^{-1}(\frac{r}{2 C_1 C_A C_B})>0$.
    To prove the first inclusion suppose that there is $y\notin B_\rho(\infty,r)$. Then by Lemma~\ref{constants} and by the quasidecreasing property of $h$, 
    \[
    r\le d_\rho(\infty, y)\le C_1h(|y|)\le C_1 C_A C_B h(t)<2 C_1 C_A C_B h(t)\quad \text{for all }0 < t \le |y|.
    \]
    Thus 
    \[
    h^{-1} \left( \frac{r}{2 C_1 C_A C_B} \right) \ge |y|
    \]
    and the first inclusion is proved.
    
    The proof of the second inclusion is even simpler. It follows directly using Lemma \ref{constants} and the definition of $h^{-1}$.
\end{proof}

Next we show that Condition~\ref{condition-C} is necessary to preserve the doubling property of the measure.

\begin{thm}
Let $(X,d)$ be a $C_U$-uniform space with doubling measure $\mu$ and assume that $\rho$ satisfies Conditions~\ref{rhodoubling} and~\ref{upperbound}.
Assume also that
\begin{equation}
\label{eq-sup-sigma}
\sup_{r > 0} \frac{\int_{X \setminus B(b,r)} \rho(|x|)^{\s} d \mu(x)}{\rho(r)^{\s} \mu(B(b,r+1))} = \infty,
\end{equation}
which means that Condition \ref{condition-C} does not hold.
Then the measure $\mu_{\rho}$ is not doubling in the space $(X,d_{\rho})$.
\end{thm}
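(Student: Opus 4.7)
The plan is to prove the contrapositive: assume $\mu_\rho$ is $C_{\mu,\rho}$-doubling on $(X,d_\rho)$, and deduce that Condition~\ref{condition-C} holds with a uniform constant, contradicting~\eqref{eq-sup-sigma}. The central idea is to sandwich the tail $X\setminus B(b,r)$ between two $d_\rho$-balls of comparable radii (both of order $h(r)=(r+1)\rho(r)$, up to absolute factors) centered at an interior point $x_r\in X$ with $|x_r|=r$. For each fixed $r\ge 1$, the intermediate value theorem applied to $y\mapsto|y|$ along a curve in the unbounded, path-connected space $(X,d)$ produces such an $x_r$. For the outer containment, if $|y|\ge r$ then by Lemma~\ref{constants} and the quasidecreasing property of $h$ (Remark~\ref{remark-1}(ii)),
\[
d_\rho(x_r,y)\le d_\rho(x_r,\infty)+d_\rho(\infty,y)\le C_1h(r)+C_1C_AC_Bh(r),
\]
so $X\setminus B(b,r)\subset B_\rho(x_r,C_3h(r))$ with $C_3:=2C_1(1+C_AC_B)$.

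Next, for the inner containment, I would set $c_1:=1/(8C_A)$. Applying Lemma~\ref{lem-2} together with Condition~\ref{rhodoubling} (which yields the pointwise lower bound $\rho(t)\ge\rho(r)/C_A$ on $[r/2,r]$ and on $[r,2r+1]$), one checks that $d_\rho(x_r,y)>c_1h(r)$ whenever $|y|<r/2$ or $|y|>2r+1$; the case $|y|<r/2$ uses $r\ge 1$ to secure $(r/2)/(r+1)\ge 1/4$. Hence $B_\rho(x_r,c_1h(r))\subset\{y\in X:r/2\le|y|\le 2r+1\}$. On this annulus Condition~\ref{rhodoubling} gives $\rho(|y|)\simeq\rho(r)$, and $\mu$-doubling gives $\mu(B(b,2r+1))\simle\mu(B(b,r+1))$, so
\[
\mu_\rho(B_\rho(x_r,c_1h(r)))\le C\,\rho(r)^\s\,\mu(B(b,r+1))
\]
for a constant $C$ depending only on $C_A,C_B,C_\mu,\s$. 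Because $C_3/c_1$ is an absolute constant, a fixed number $N:=\lceil\log_2(C_3/c_1)\rceil$ of $C_{\mu,\rho}$-doubling steps closes the gap between the inner and outer balls, yielding Condition~\ref{condition-C} for all $r\ge 1$. For $r\in(0,1)$, the ratio in~\eqref{eq-sup-sigma} is controlled by $\mu_\rho(X)\cdot C_A^\s/[\rho(1)^\s\mu(B(b,1))]$, which is finite: Theorem~\ref{thm-9} together with $\mu_\rho$-doubling forces $\mu_\rho(X)<\infty$, and $\mu(B(b,1))>0$ by applying $\mu$-doubling at points of $X$ arbitrarily close to $b$.

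The crux of the argument---and the obstacle to be cleared---is the inner-containment step. The radius $c_1h(r)$ must be small enough (as a multiple of $h(r)$) that $B_\rho(x_r,c_1h(r))$ is trapped in a two-sided annulus about $b$ on which $\rho$ is essentially constant, yet must remain a \emph{fixed} fraction of $h(r)$ so that the doubling chain between the inner and outer radii has length independent of $r$. This absolute-constant ratio is what upgrades the doubling of $\mu_\rho$ to the tight Condition~\ref{condition-C} without polynomial-in-$r$ losses---which would otherwise be compatible with~\eqref{eq-sup-sigma}.
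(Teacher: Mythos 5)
Your proposal is correct, and it reaches the conclusion by a genuinely different route than the paper. You prove the contrapositive uniformly in $r$: assuming $\mu_\rho$ is doubling, you sandwich the tail $X\setminus B(b,r)$ between two $d_\rho$-balls $B_\rho(x_r,c_1h(r))\subset B_\rho(x_r,C_3h(r))$ centered at an interior point with $|x_r|=r$, where the inner ball is trapped (via Lemma~\ref{lem-2} and Condition~\ref{rhodoubling}, using $r\ge 1$) in the annulus $\{r/2\le|y|\le 2r+1\}$ on which $\rho\simeq\rho(r)$, and the outer ball contains the tail by Lemma~\ref{constants} and quasidecreasingness of $h$; since $C_3/c_1$ depends only on $C_U,C_A,C_B$, a bounded number of doubling steps yields Condition~\ref{condition-C} for $r\ge1$ with a constant depending only on $C_{\mu_{\rho}}$ and the structural data, and the range $r<1$ is absorbed using $\mu_\rho(X)<\infty$ (boundedness of $(X,d_\rho)$ from Theorem~\ref{thm-9} plus the finiteness built into the doubling definition) and $\mu(B(b,1))>0$, contradicting~\eqref{eq-sup-sigma}. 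The paper instead argues by contradiction at a single bad scale: it extracts from~\eqref{eq-sup-sigma} one radius $R$ where Condition~\ref{condition-C} fails by a large factor, converts this via Lemmas~\ref{small balls at infty} and~\ref{h-1doubling} (the $h^{-1}$ machinery) into the statement that $\mu_\rho(B_\rho(\infty,r))$ vastly exceeds $\mu_\rho(B_\rho(\infty,2r)\setminus B_\rho(\infty,r))$, and then contradicts doubling using a point at $d_\rho$-distance $\tfrac32 r$ from $\infty$. Your version avoids the inverse function $h^{-1}$ and the delicate choice of the threshold $\tau_2$, and it yields the quantitatively cleaner statement that doubling of $\mu_\rho$ implies Condition~\ref{condition-C} with an explicit constant; the paper's version only needs to exploit the failure of~\ref{condition-C} at one scale and so requires no uniform-in-$r$ sandwich. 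The routine points you leave implicit (existence of $x_r$ by connectivity and the intermediate value theorem, doubling of $\mu$ at the boundary point $b$, the triangle inequality through $\infty$ in the completion) are all available in the paper's framework, so I see no gap.
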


\begin{proof}

By Theorem \ref{thm-9} the space $(X,d_{\rho})$ is bounded. If $\int_X \rho(|x|)^{\s} d \mu(x) = \infty$, then $\mu_{\rho}(X) = \infty$ and thus $\mu_{\rho}$ is not doubling. Therefore let us assume that $\int_X \rho(|x|)^{\s} d \mu(x) < \infty$ and assume by contradiction that $\mu_{\rho}$ is doubling in $(X,d_{\rho})$ with doubling constant $C_{\mu_{\rho}}$.
By Lemma \ref{h-1doubling} and Condition \ref{rhodoubling} there exists a constant $D$ depending only on $C_U$, $\s$, $C_A$ and $C_B$ such that
\begin{equation*}
\rho \left( h^{-1}(2C_A r) \right)^{\s}
\leq D \rho(R)^{\s}
\end{equation*}
whenever $h^{-1} \left( \frac{r}{2 C_1 C_A C_B} \right) \leq \frac{R}{2} \leq h^{-1} \left( \frac{r}{4 C_1 C_A C_B} \right)$ and $2 C_A r \leq \tau_1$.
Here the constant $C_1$ is the constant in Lemma \ref{constants} and $\tau_1 = \rho(1) / C_A$.

From Remark \ref{remark-1}(ii) we get that $\rho$ is quasidecreasing with constant $C_A C_B$.
If $R \leq M := 2 h^{-1} \left( \frac{\tau_1}{6 C_1 C_A^2 C_B} \right)$, then
\begin{equation*}
\frac{\int_{X \setminus B(b,R)} \rho(|x|)^{\s} d \mu(x)}{\rho(R)^{\s} \mu(B(b,R+1))}
\leq \frac{(C_A C_B)^{\s} \int_X \rho(|x|)^{\s} d \mu(x)}{\rho(M)^{\s} \mu(B(b,1))}
< \infty.
\end{equation*}
This and \eqref{eq-sup-sigma} imply that there exists $R > M$ such that
\begin{equation*}
\int_{X \setminus B(b,R)} \rho(|x|)^{\s} d \mu(x)
\geq 2 C_{\mu_{\rho}}^3 D (C_A C_B)^{\s} \rho(R)^{\s} \mu(B(b,R+1)).
\end{equation*}

Because $h^{-1}$ is monotone decreasing and unbounded, there exists a positive number $\tau_2$ such that $h^{-1}(\tau) \leq R/2$ whenever $\tau > \tau_2$ and $h^{-1}(\tau) \geq R/2$ whenever $\tau < \tau_2$.
If $\tau < \tau_2$, we get
\begin{equation*}
h^{-1}(\tau)
\geq \frac{R}{2}
> \frac{M}{2}
= h^{-1} \left( 
\frac{\tau_1}{6 C_1 C_A^2 C_B} \right),
\end{equation*}
which implies that $\tau < \frac{\tau_1}{6 C_1 C_A^2 C_B}$. Because this holds for every $\tau < \tau_2$, we conclude that $\tau_2 \leq \frac{\tau_1}{6 C_1 C_A^2 C_B}$.
Let us define $r = 3 C_1 C_A C_B \tau_2$. Then $\frac{r}{4 C_1 C_A C_B} < \tau_2 < \frac{r}{2 C_1 C_A C_B}$ and therefore $h^{-1} \left( \frac{r}{2 C_1 C_A C_B} \right) \leq \frac{R}{2} \leq h^{-1} \left( \frac{r}{4 C_1 C_A C_B} \right)$.
Also we have $2 C_A r = 6 C_1 C_A^2 C_B \tau_2 \leq \tau_1$.
Finally from Lemma \ref{small balls at infty} we get that
\begin{equation*}
B_{\rho}(\infty,2r) \setminus B_{\rho}(\infty,r) \subset B \left( b,2h^{-1} \left( \frac{r}{2 C_1 C_A C_B} \right) \right) \setminus B \left( b,h^{-1} \left( 2 C_A r \right) \right).
\end{equation*}

With these estimates and Lemma \ref{small balls at infty} we get
\begin{align*}
\mu_{\rho}(B_{\rho}(\infty,r))
&\geq \int_{X \setminus B \left( b,2h^{-1} \left( \frac{r}{2 C_1 C_A C_B} \right) \right) } \rho(|x|)^{\s} d \mu(x)
\\
&\geq \int_{X \setminus B(b,R)} \rho(|x|)^{\s} d \mu(x)
\geq 2 C_{\mu_{\rho}}^3 D (C_A C_B)^{\s} \rho(R)^{\s} \mu(B(b,R+1))
\\
&\geq 2 C_{\mu_{\rho}}^3 (C_A C_B)^{\s} \rho \left( h^{-1}(2C_A r) \right)^{\s} \mu \left( B \left( b,2h^{-1} \left( \frac{r}{2 C_1 C_A C_B} \right) \right) \right)
\\
&\geq 2 C_{\mu_{\rho}}^3 \int_{B \left( b,2h^{-1} \left( \frac{r}{2 C_1 C_A C_B} \right) \right) \setminus B \left( b,h^{-1}(2C_A r) \right)} \rho(|x|)^{\s} d \mu(x)
\\
&\geq 2 C_{\mu_{\rho}}^3 \mu_{\rho}(B_{\rho}(\infty,2r) \setminus B_{\rho}(\infty,r)).
\end{align*}

Now let $z \in X$ such that $|z|=1$. Such a point exists, because the space $(X,d)$ is connected and unbounded. Then $d_{\rho}(z,\infty) \geq h(1) / C_A = 2 \rho(1) / C_A = 2 \tau_1 \geq 12 C_1 C_A^2 C_B \tau_2 = 4 C_A r$, where we used Lemma \ref{constants}.
Thus again by connectivity there exists $x \in X$ such that $d_{\rho}(x,\infty) = \frac{3}{2}r$.
Then
\begin{align*}
\mu_{\rho}(B_{\rho}(\infty,r))
&\leq \mu_{\rho}(B_{\rho}(x,4r))
\leq C_{\mu_{\rho}}^3 \mu_{\rho}(B_{\rho}(x,r/2))
\\
&\leq C_{\mu_{\rho}}^3 \mu_{\rho}(B_{\rho}(\infty,2r) \setminus B_{\rho}(\infty,r))
\leq \frac{1}{2} \mu_{\rho}(B_{\rho}(\infty,r)),
\end{align*}
which is a contradiction, because $\mu_{\rho}(B_{\rho}(\infty,r))$ is positive. Because $C_{\mu_{\rho}}$ was arbitrary, we conclude that $\mu_{\rho}$ is not doubling in $(X,d_{\rho})$ with any constant.
\end{proof}

From the previous theorem we conclude that to obtain the doubling property of $\mu_{\rho}$ we need Condition~\ref{condition-C}.
In the next example we show that Condition~\ref{condition-C} does not follow from Conditions~\ref{rhodoubling} and~\ref{upperbound}.
However if $\s \geq n$, then Conditions \ref{rhodoubling} and \ref{upperbound} imply Condition~\ref{condition-C} in $\mathbb{R}^n$, as we show in the subsequent proposition.

\begin{example}
Consider again the example function $\rho(t) = (t+2)^{\alpha} (\log(t+2))^{\beta}$.
Recall that $\rho$ satisfies \ref{rhodoubling} and \ref{upperbound} if and only if $\alpha < -1$.
In this example let $X$ be the open upper half-space $\mathbb{R}_+^n$ and let $b$ be the origin. Then
\begin{equation*}
\int_{X\setminus B(b,r)} \rho(|x|)^\s d\mu(x)
= c_n \int_r^{\infty} t^{n-1} (t+2)^{\alpha \s} (\log(t+2))^{\beta \s} dt.
\end{equation*}
If $\alpha \s > -n$, then this integral is infinite and thus $\mu_{\rho}(X) = \infty$. The same thing happens, if $\alpha \s = -n$ and $\beta \s \geq -1$.
If $\alpha \s < -n$, then we see that $\rho$ satisfies \ref{condition-C}.
If $\alpha \s = -n$ and $\beta \s < -1$, then this integral is finite, but $\rho$ does not satisfy \ref{condition-C}. 
Thus if $\s < n$, $\alpha = -n / \s < -1$ and $\beta < -1 / \s$, then the integral is finite and this function satisfies Conditions \ref{rhodoubling} and \ref{upperbound}, but it does not satisfy Condition \ref{condition-C}.
\end{example}

\begin{prop}
If $\s$ is large enough, then Condition \ref{condition-C} follows from Conditions \ref{rhodoubling} and \ref{upperbound} with constant $C_C$ depending on $C_{\mu}$, $\s$, $C_A$ and $C_B$.
\end{prop}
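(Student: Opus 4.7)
The plan is to decompose $X\setminus B(b,r)$ into dyadic annular shells centered at $b$ and estimate $\rho(|x|)^\sigma$ on each shell via Lemma~\ref{BaloghLem}, then control the $\mu$-measure of each shell via the doubling property of $\mu$. Concretely, I will take $A_0 = B(b,2(r+1))\setminus B(b,r)$ and $A_k = B(b,2^{k+1}(r+1))\setminus B(b,2^k(r+1))$ for $k\ge 1$, so that $X\setminus B(b,r) \subset \bigcup_{k\ge 0} A_k$.

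On $A_k$ with $k\ge 1$, Lemma~\ref{BaloghLem} applied with $s=|x|\ge 2^k(r+1)-1$ (so $s+1\ge 2^k(r+1)$) yields
\[
\rho(|x|)\le C_AC_B\bigl(\tfrac{r+1}{|x|+1}\bigr)^{\eps+1}\rho(r)\le C_AC_B\, 2^{-k(\eps+1)}\rho(r),
\]
with $\eps = 1/(C_AC_B)$, and raising to the $\sigma$-th power gives $\rho(|x|)^\sigma \le (C_AC_B)^\sigma 2^{-k(\eps+1)\sigma}\rho(r)^\sigma$. On $A_0$ quasidecreasingness (Remark~\ref{remark-1}(ii)) directly yields $\rho(|x|)^\sigma\le (C_AC_B)^\sigma\rho(r)^\sigma$. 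Doubling of $\mu$ gives $\mu(A_k)\le \mu(B(b,2^{k+1}(r+1)))\le C_\mu^{k+1}\mu(B(b,r+1))$ for all $k\ge 0$.

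Multiplying the two bounds and summing,
\[
\int_{X\setminus B(b,r)}\rho(|x|)^\sigma\, d\mu(x) \le (C_AC_B)^\sigma\rho(r)^\sigma\mu(B(b,r+1))\sum_{k=0}^\infty C_\mu^{k+1}\, 2^{-k(\eps+1)\sigma}.
\]
The geometric series converges provided $C_\mu \cdot 2^{-(\eps+1)\sigma} < 1$, i.e.\ provided
\[
\sigma > \frac{\log_2 C_\mu}{\eps+1} = \frac{\log_2 C_\mu}{1+1/(C_AC_B)},
\]
and under this condition we obtain Condition~\ref{condition-C} with a constant $C_C$ depending only on $C_\mu,\sigma,C_A,C_B$.

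The only potential obstacle is ensuring the correct exponent in the decay estimate of $\rho$; the sharp threshold on $\sigma$ is dictated by Lemma~\ref{BaloghLem}, which forces $\eps = 1/(C_AC_B)$, so a larger $C_AC_B$ requires a correspondingly larger $\sigma$. Everything else is a routine geometric summation.
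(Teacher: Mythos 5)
Your proposal is correct and follows essentially the same route as the paper's proof: a dyadic annular decomposition centered at $b$, the decay estimate for $\rho$ from Lemma~\ref{BaloghLem} with $\eps=1/(C_AC_B)$, the doubling of $\mu$ on the enlarged balls, and a geometric summation that converges precisely when $\s>\log_2(C_\mu)/(\eps+1)$, yielding Condition~\ref{condition-C} with $C_C$ depending only on $C_\mu$, $\s$, $C_A$, $C_B$. The only (harmless, in fact slightly cleaner) difference is that your shells are taken at radii $2^k(r+1)$ rather than $2^k r$, which makes the ratio in Lemma~\ref{BaloghLem} exactly $2^{-k}$ for all $r>0$ and thereby avoids the paper's separate treatment of the case $0<r<1$.
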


\begin{proof}

If $r \geq 1$, then we get from \ref{rhodoubling}, Lemma \ref{BaloghLem} and the doubling property of $\mu$
\begin{align*}
\frac{\int_{X\setminus B(b,r)}\rho(|x|)^\s d\mu(x)}{\rho(r)^\s\mu(B(b,r+1))}
&\leq \frac{\sum_{j=0}^{\infty} \int_{B(b,2^{j+1}r) \setminus B(b,2^j r)} \rho(|x|)^{\s} d\mu(x)}{\rho(r)^{\s} \mu(B(b,r))}
\\
&\leq \sum_{j=0}^{\infty} \frac{ \mu(B(b,2^{j+1} r)) C_A^{\s} \rho(2^{j+1} r)^{\s}}{\rho(r)^{\s} \mu(B(b,r))}
\\
&\leq \sum_{j=0}^{\infty} C_{\mu}^{j+1} C_A^{\s} (C_A C_B)^{\s} \left( \frac{r+1}{2^{j+1} r + 1} \right)^{\s (\eps + 1)}
\\
&\leq C_{\mu} C_A^{2 \s} C_B^{\s} \sum_{j=0}^{\infty} C_{\mu}^j 2^{-j \s (\eps + 1)}
\simeq \sum_{j=0}^{\infty} \left( \frac{C_{\mu}}{2^{\s(\eps+1)}} \right)^j
\simeq 1,
\end{align*}
if $\s > \frac{\log_2(C_{\mu})}{\eps+1}$.
This holds for any $C_A$ and $C_B$, if $\s \geq \log_2(C_{\mu})$ and in the case of $\mathbb{R}^n$, $\log_2(C_{\mu}) = n$.
If $0 < r < 1$, then we get from the above
\begin{align*}
\frac{\int_{X\setminus B(b,r)}\rho(|x|)^\s d\mu(x)}{\rho(r)^\s\mu(B(b,r+1))}
&\leq \frac{C_A^{\s} \int_X \rho(|x|)^{\s} d\mu(x)}{\rho(1)^{\s} \mu(B(b,1))}
\\
&= C_A^{\s} \frac{\int_{X \setminus B(b,1)} \rho(|x|)^{\s} d\mu(x) + \int_{B(b,1)} \rho(|x|)^{\s} d\mu(x)}{\rho(1)^{\s} \mu(B(b,1))}
\\
&\simle \frac{\rho(1)^{\s} \mu(B(b,2)) + \rho(1)^{\s} \mu(B(b,1))}{\rho(1)^{\s} \mu(B(b,1))}
\leq C_{\mu} + 1.
\qedhere
\end{align*}
\end{proof}

\subsection{Preserving the doubling property}
\label{subsect-doubling}

Throughout the rest of this paper, in addition to the finiteness of $\int_0^\infty\rho(t)\, dt$ and Conditions~\ref{rhodoubling}, \ref{upperbound}, we also make Condition~\ref{condition-C} a standing assumption.
The doubling of $\mu$ and Condition~\ref{rhodoubling} together give that
\begin{equation*}
    \int_{X\setminus B(b,r)}\rho(|x|)^\s d\mu(x)
    \geq \int_{B(b,2r+1)\setminus B(b,r)}\rho(|x|)^\s d\mu(x) \geq\frac{1}{C_A^{\s}C_{\mu}^3}\rho(r)^\s\mu(B(b,r+1))
\end{equation*}
for every $r > 0$.
Together with Condition \ref{condition-C} this gives us
\begin{equation}\label{rho and measure}
\int_{X\setminus B(b,r)}\rho(|x|)^\s d\mu(x)
\simeq \rho(r)^\s \mu(B(b,r+1))\quad\text{for }r > 0.
\end{equation}
To prove the doubling property of $\mu_{\rho}$ we start with balls at infinity.

\begin{prop}
\label{balls at infty}
For all $r>0$ we have
    $$
    0<\mu_{\rho}(B_\rho(\infty,2r))\simle \mu_{\rho}(B_\rho(\infty,r))<\infty,
    $$
    with the comparison constant depending on $C_U$, $C_{\mu}$, $\s$, $C_A$, $C_B$ and $C_C$.
\end{prop}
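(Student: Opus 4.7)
The plan is to split on the size of $r$ using the threshold $\tau_1/C_A$ at which Lemma~\ref{small balls at infty} applies, after first disposing of the two outer inequalities. For $\mu_\rho(X)<\infty$ (which handles $\mu_\rho(B_\rho(\infty,r))<\infty$), write $X=B(b,1)\cup(X\setminus B(b,1))$, bound $\mu_\rho(B(b,1))\le(\sup\rho)^\s\mu(B(b,1))<\infty$ using boundedness of $\rho$ from Remark~\ref{remark-1}(i), and bound $\mu_\rho(X\setminus B(b,1))\le C_C\rho(1)^\s\mu(B(b,2))<\infty$ by Condition~\ref{condition-C}. For $\mu_\rho(B_\rho(\infty,2r))>0$, observe that $B_\rho(\infty,2r)\cap X$ is a non-empty open subset of $(X,d)$ (the two topologies agree by Proposition~\ref{homeo}, and $\infty$ is the limit of sequences with $|x_i|\to\infty$ by Theorem~\ref{thm-9}); since $\rho>0$ and $\mu$ is strictly positive on non-empty open subsets, positivity follows.

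\emph{Small regime ($2r\le\tau_1/C_A$).} Applying Lemma~\ref{small balls at infty} at both scales yields
\[
B_\rho(\infty,2r)\subseteq X\setminus B(b,h^{-1}(2C_Ar)),\qquad B_\rho(\infty,r)\supseteq X\setminus B(b,2h^{-1}(r/(2C_1C_AC_B))).
\]
Combining with~\eqref{rho and measure} gives
\[
\mu_\rho(B_\rho(\infty,2r))\simle \rho(h^{-1}(2C_Ar))^\s\,\mu(B(b,h^{-1}(2C_Ar)+1))
\]
and
\[
\mu_\rho(B_\rho(\infty,r))\simge \rho(2h^{-1}(r/(2C_1C_AC_B)))^\s\,\mu(B(b,2h^{-1}(r/(2C_1C_AC_B))+1)).
\]
Since $r/(2C_1C_AC_B)\le 2C_Ar\le\tau_1$, one can iterate Lemma~\ref{h-1doubling} a bounded number of times (picking $N$ with $2^N\ge 4C_1C_A^2C_B$, with all intermediate arguments remaining in $(0,\tau_1]$) to obtain $h^{-1}(r/(2C_1C_AC_B))\simle h^{-1}(2C_Ar)$, with comparison constant depending only on $C_U$, $C_A$, $C_B$. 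The reverse inequality is automatic by monotonicity of $h^{-1}$, so the two $h^{-1}$-values are comparable. Then iterating Condition~\ref{rhodoubling} (equivalently,~\eqref{eq:power-doubling}) absorbs the ratio of the $\rho^\s$ factors, and iterating the doubling of $\mu$ absorbs the ratio of the $\mu$ factors, yielding $\mu_\rho(B_\rho(\infty,2r))\simle\mu_\rho(B_\rho(\infty,r))$.

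\emph{Large regime ($2r>\tau_1/C_A$).} Here $r>\tau_1/(2C_A)$, so by monotonicity $\mu_\rho(B_\rho(\infty,r))\ge\mu_\rho(B_\rho(\infty,\tau_1/(2C_A)))$. The latter is a strictly positive constant depending only on the structural constants (either apply the small regime at $r=\tau_1/(2C_A)$ together with the positivity established above, or invoke Lemma~\ref{small balls at infty} and~\eqref{rho and measure} directly). Meanwhile $\mu_\rho(B_\rho(\infty,2r))\le\mu_\rho(X)<\infty$, so the ratio is bounded by the absolute constant $\mu_\rho(X)/\mu_\rho(B_\rho(\infty,\tau_1/(2C_A)))$.

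The chief technical obstacle is the coordinated iteration in the small regime: one must run Lemma~\ref{h-1doubling}, Condition~\ref{rhodoubling}, and the doubling of $\mu$ each a bounded number of times to absorb the discrepancy between the arguments $h^{-1}(2C_Ar)$ and $2h^{-1}(r/(2C_1C_AC_B))$, while keeping the intermediate arguments of $h^{-1}$ inside the admissible range $(0,\tau_1]$ demanded by Lemma~\ref{h-1doubling}.
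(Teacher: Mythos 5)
Your treatment of the outer inequalities and of the small regime $2r\le\tau_1/C_A$ is correct and is essentially the paper's argument (Lemma~\ref{small balls at infty} at both scales, a bounded number of iterations of Lemma~\ref{h-1doubling}, then \eqref{rho and measure} together with Condition~\ref{rhodoubling} and doubling of $\mu$); the positivity via openness and Proposition~\ref{homeo} is a harmless variant of the paper's use of Lemma~\ref{small balls at infty}.

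The genuine gap is in the large regime. You assert that $\mu_\rho(B_\rho(\infty,\tau_1/(2C_A)))$ is ``a strictly positive constant depending only on the structural constants'' and that the ratio $\mu_\rho(X)/\mu_\rho(B_\rho(\infty,\tau_1/(2C_A)))$ is an absolute constant. Neither is true as stated: rescaling $\mu$ by a factor $t>0$ multiplies $\mu_\rho(B_\rho(\infty,\tau_1/(2C_A)))$ by $t$ while leaving $C_U,C_\mu,\sigma,C_A,C_B,C_C$ unchanged, so this ball measure is a space-dependent quantity, and the structural boundedness of the ratio is precisely what remains to be proved (it is what the proposition asserts and what Theorem~\ref{thm-doubling} needs). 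Neither of your parenthetical routes supplies it: applying the small regime at $r=\tau_1/(2C_A)$ only compares $B_\rho(\infty,\tau_1/C_A)$ with $B_\rho(\infty,\tau_1/(2C_A))$, and $B_\rho(\infty,\tau_1/C_A)$ is not all of $X$ (for $|x|=1$, Lemma~\ref{constants} gives $d_\rho(x,\infty)\ge 2\rho(1)/C_A>\tau_1/C_A$); invoking Lemma~\ref{small balls at infty} and \eqref{rho and measure} gives a lower bound $\rho(s_0)^\sigma\mu(B(b,s_0+1))$ with $s_0=2h^{-1}\bigl(\tau_1/(4C_1C_A^2C_B)\bigr)$, which is positive but again not a structural constant.

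What is missing is the structural comparison $\mu_\rho(X)\simle\mu_\rho(B_\rho(\infty,\tau_1/(2C_A)))$, which is how the paper closes this case. Two ingredients are needed. First, a quantitative bound on the total mass: for $0<r\le 1$, Condition~\ref{condition-C} and $\rho(r)\le C_A\rho(1)$ give $\mu_\rho(X\setminus B(b,r))\le C_CC_A^\sigma\rho(1)^\sigma\mu(B(b,2))$, and letting $r\to0$ yields $\mu_\rho(X)\le C_CC_A^\sigma\rho(1)^\sigma\mu(B(b,2))$; your finiteness argument via $(\sup\rho)^\sigma\mu(B(b,1))$ does not provide this, since $\sup\rho$ is only controlled by $C_A\int_0^\infty\rho$, which is not structural. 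Second, the matching lower bound: $s_0\ge 2$, and since $h^{-1}(\tau_1)\le 2(2C_A^2C_B)^{1/\eps}-1$ (Lemma~\ref{BaloghLem}) and Lemma~\ref{h-1doubling} can be iterated a bounded number of times, $s_0$ is bounded above by a structural constant, so iterating Condition~\ref{rhodoubling} gives $\rho(1)\simle\rho(s_0)$ and hence $\rho(1)^\sigma\mu(B(b,2))\simle\rho(s_0)^\sigma\mu(B(b,s_0+1))\simle\mu_\rho(B_\rho(\infty,\tau_1/(2C_A)))$ by \eqref{rho and measure} and Lemma~\ref{small balls at infty}. With these two steps inserted, your large-regime conclusion holds with the stated dependence; as written, your argument only yields a comparison constant depending on the particular space $(X,d,\mu)$.
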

\begin{proof} 
    From Conditions \ref{rhodoubling} and \ref{condition-C} we get that
\begin{equation}
\label{measureofX}
\mu_{\rho}(X \setminus B(b,r))
\leq C_C \rho(r)^{\s} \mu(B(b,r+1))
\leq C_C C_A^{\s} \rho(1)^{\s} \mu(B(b,2))
\end{equation}
for every $0 < r \leq 1$.
Thus by taking the limit $r \to 0$, we get that $\mu_{\rho}(X) < \infty$ and this shows the final inequality.
Moreover, since $\mu(B_\rho(\infty,r))>0$, see Lemma \ref{small balls at infty}, and $\rho>0$, we have that $\mu_{\rho}(B_\rho(\infty,r))>0$. 
Let $r_0=\frac{\tau_1}{2C_A}$.
If $0<r\le r_0$, then the claim follows by Lemmas \ref{small balls at infty} and \ref{h-1doubling} together with \eqref{rho and measure}.
If $r\ge r_0$, then similarly we get
    \begin{equation*}
    \mu_{\rho}(B_\rho(\infty,r))\ge \mu_{\rho}(B_\rho(\infty,r_0))\simge
    \mu_{\rho}(X)\ge\mu_{\rho}(B_\rho(\infty,2r)).
    \qedhere
    \end{equation*}
\end{proof}

Before we prove the doubling property for balls centered at other points than $\infty$, we need the following lemma which compares balls, centered at points in $X$, with respect to the metric $d_\rho$, to balls with respect to $d$. In particular, this lemma indicates that the two metrics $d$ and $d_\rho$ are locally quasisymmetrically equivalent, but we will not need this property in this paper.

\begin{lem}
\label{lem-13}
Let $c_0 := (2 C_1 C_A^2 C_B)^{-1}$, where $C_1 = 3C_U\, C_A^{4+\log_2(C_U)}\, C_B$ is the constant from Lemma~\ref{constants}.
If $x \in X$ and $0 < r \leq 2 c_0 d_{\rho}(x,\infty)$, then
\begin{equation*}
B \left( x,\frac{a_1 r}{\rho(|x|)} \right)
\subset B_{\rho}(x,r)
\subset B \left( x,\frac{a_2 r}{\rho(|x|)} \right),
\end{equation*}
where $a_1=1/C_2$, $a_2=C_A^3C_B$ and $C_2 = C_U C_A^{3+\log_2(C_U)}$ is the constant from Lemma \ref{lem-3}.
Also $\frac{1}{C_A}\rho(|x|) \leq \rho(|y|) \leq C_A \rho(|x|)$ for every $y \in B_{\rho}(x,r)$.
\end{lem}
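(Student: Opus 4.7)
The plan is to use the smallness of $r$, forced by $r \leq 2c_0\, d_\rho(x,\infty)$ together with the upper bound in Lemma~\ref{lem-4}, to confine $|y|$ to the comparability range $|y|+1 \in [(|x|+1)/2,\,2(|x|+1)]$ for every $y \in B_\rho(x,r)$. Once this is secured, Condition~\ref{rhodoubling} immediately delivers $\rho(|x|)/C_A \leq \rho(|y|) \leq C_A \rho(|x|)$ and Lemma~\ref{lem-3} becomes applicable, yielding the two-sided comparison $d_\rho(x,y) \simeq \rho(|x|)\, d(x,y)$ from which both ball inclusions are read off. The first step, unwinding $c_0$ against Lemma~\ref{lem-4}, produces the concrete smallness bound
\[
r \;\leq\; \frac{d_\rho(x,\infty)}{C_1 C_A^2 C_B} \;\leq\; \frac{(|x|+1)\rho(|x|)}{C_A^2 C_B},
\]
which is kept at hand throughout.

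To confine $|y|$, I would argue by contradiction in two cases. If $|y| \geq 2|x|+1$, the lower bound of Lemma~\ref{lem-4} gives $d_\rho(x,y) \geq (|x|+1)\rho(|x|)/C_A = C_A C_B \cdot (|x|+1)\rho(|x|)/(C_A^2 C_B) \geq C_A C_B\, r \geq r$, using $C_A C_B \geq 1$ from Remark~\ref{remark-1}(ii); this contradicts $y \in B_\rho(x,r)$. If instead $|y| \leq (|x|-1)/2$, so that $|x| \geq 2|y|+1$, the symmetric use of Lemma~\ref{lem-4} yields $d_\rho(x,y) \geq (|y|+1)\rho(|y|)/C_A$. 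Here the plain quasidecreasing property of $(t+1)\rho(t)$ from Remark~\ref{remark-1}(iv) is insufficient; instead I would invoke Lemma~\ref{BaloghLem}, which combined with $(|x|+1)/(|y|+1) \geq 2$ upgrades to
\[
(|y|+1)\rho(|y|) \;\geq\; \frac{2^{\eps}}{C_A C_B}\,(|x|+1)\rho(|x|),
\]
so that $d_\rho(x,y) \geq 2^{\eps}\, r > r$, again a contradiction.

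With the range of $|y|$ secured, the inclusion $B_\rho(x,r) \subset B(x,\, a_2 r/\rho(|x|))$ follows directly from the left comparison of Lemma~\ref{lem-3}: for any $y \in B_\rho(x,r)$ we have $d(x,y) \leq C_A^3 C_B\, d_\rho(x,y)/\rho(|x|) < a_2\, r/\rho(|x|)$. For the reverse inclusion, given $y \in B(x,\, a_1 r/\rho(|x|))$, the estimate
\[
d(x,y) \;<\; \frac{r}{C_2\, \rho(|x|)} \;\leq\; \frac{|x|+1}{C_2 C_A^2 C_B} \;<\; \frac{|x|+1}{2}
\]
(valid since $C_2 \geq 1$, $C_A > 2$, and $C_A C_B \geq 1$) places $|y|+1$ automatically in $[(|x|+1)/2,\, 2(|x|+1)]$, so the right comparison of Lemma~\ref{lem-3} applies and gives $d_\rho(x,y) \leq C_2\, \rho(|x|)\, d(x,y) < r$.

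The principal obstacle is the case $|y| \leq (|x|-1)/2$: the bare quasidecreasing control on the profile $(t+1)\rho(t)$ misses the required contradiction by a factor of order $C_B$, and it is essential to exploit the sharpened weight $(t+1)^{\eps+1}\rho(t)$ from Lemma~\ref{BaloghLem}, whose exponent $\eps = 1/(C_A C_B) > 0$ supplies the decisive extra factor $2^{\eps} > 1$.
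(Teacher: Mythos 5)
Your proof is correct and follows essentially the same route as the paper: use $r\le 2c_0\,d_\rho(x,\infty)$ together with Lemmas~\ref{lem-4} and~\ref{constants} to rule out both $|y|\ge 2|x|+1$ and $|x|\ge 2|y|+1$ for $y\in B_\rho(x,r)$, then invoke Condition~\ref{rhodoubling} for the comparability of $\rho(|y|)$ with $\rho(|x|)$ and Lemma~\ref{lem-3} for the two ball inclusions. The only divergence is the case $|x|\ge 2|y|+1$, and the ``principal obstacle'' you describe there is not actually an obstacle: the plain quasidecreasing property of $h(t)=(t+1)\rho(t)$ with constant $C_AC_B$ (Remark~\ref{remark-1}(ii)) already gives
\[
d_\rho(x,y)\;\ge\;\frac{h(|y|)}{C_A}\;\ge\;\frac{h(|x|)}{C_A^2C_B}\;\ge\;r,
\]
which contradicts the strict inequality $d_\rho(x,y)<r$ coming from membership in the open ball; this is exactly how the paper argues, the constant $c_0$ being tuned so that the factors match exactly and strictness supplies the contradiction. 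Your detour through Lemma~\ref{BaloghLem} is a valid alternative that merely buys the extra slack $2^{\eps}>1$, so nothing in your argument is wrong --- only the claim that the bare quasidecreasing control ``misses by a factor of order $C_B$'' is mistaken.
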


\begin{proof}
Recall that $C_A C_B \geq 1$ and $C_A > 2$.
Assume that $y \in B_{\rho}(x,r)$. If $|y| \geq 2|x|+1$, then by Lemma \ref{lem-4} and Lemma \ref{constants}
\begin{equation*}
2c_0 d_{\rho}(x,\infty)
\geq r
> d_{\rho}(x,y)
\geq \frac{1}{C_A} \rho(|x|)(|x|+1)
\geq \frac{1}{C_A C_1} d_{\rho}(x,\infty).
\end{equation*}
By the definition of $c_0$, this is a contradiction.
If $|x| \geq 2|y|+1$, then by Lemma \ref{lem-4} and Lemma \ref{constants} together with Remark \ref{remark-1}(ii) we have
\begin{align*}
d_{\rho}(x,\infty)
&\leq C_1 \rho(|x|)(|x|+1)
\leq C_1 C_A C_B \rho(|y|)(|y|+1)
\leq C_1 C_A^2 C_B d_{\rho}(x,y)
\\
&< C_1 C_A^2 C_B r
\leq 2 C_1 C_A^2 C_B c_0 d_{\rho}(x,\infty).
\end{align*}
Again we have a contradiction by the definition of $c_0$.
Thus $y \in B_{\rho}(x,r)$ implies that $|y| \leq 2|x|+1$ and $|x| \leq 2|y|+1$. Then Condition~\ref{rhodoubling} gives us $\rho(|y|) \simeq \rho(|x|)$.

Let us now prove the second inclusion. Assume that $y \in B_{\rho}(x,r)$. Then $\frac{1}{2} (|x|+~1) \leq |y|+1 \leq 2(|x|+1)$.
Thus by Lemma \ref{lem-3} we have
\begin{equation*}
d(x,y)
\leq \frac{C_A^3 C_B d_{\rho}(x,y)}{\rho(|x|)}
< \frac{C_A^3 C_B r}{\rho(|x|)},
\end{equation*}
so the second inclusion holds.

To prove the first inclusion suppose that $y \in B \left( x , \frac{a_1 r}{\rho(|x|)} \right) $. 
Then because $\frac{1}{C_2} \leq \frac{1}{4 c_0 C_1}$,
\begin{equation*}
d(x,y)
< \frac{r}{4c_0 C_1 \rho(|x|)}
\leq \frac{2c_0 d_{\rho}(x,\infty)}{4 c_0 C_1 \rho(|x|)}
\leq \frac{1}{2}(|x|+1),
\end{equation*}
where we used Lemma \ref{constants}.
It thus follows from the triangle inequality that
\begin{equation*}
\frac{1}{2}(|x|+1)
\leq |y|+1
\leq 2(|x|+1).
\end{equation*}
Thus by Lemma \ref{lem-3},
\begin{equation*}
d_{\rho}(x,y)
\leq C_2 \rho(|x|) d(x,y)
< C_2 \rho(|x|) a_1 r / \rho(|x|)
= r,
\end{equation*}
which completes the proof.
\end{proof}

Now we are ready to prove the main result of this section, which verifies Theorem~\ref{maintheorem}(b).
\begin{thm}
\label{thm-doubling}
Let $(X,d,\mu)$ be a $C_U$-uniform space with a doubling measure $\mu$ and assume that $\rho$ satisfies Conditions \ref{rhodoubling}, \ref{upperbound} and \ref{condition-C}.
Then $\mu_{\rho}$ is doubling in the space $(X,d_{\rho})$ i.e. there exists a constant $C_{\mu_{\rho}} > 1$ that depends on $C_U$, $C_{\mu}$, $\s$, $C_A$, $C_B$ and $C_C$ such that
\begin{equation*}
0
< \mu_{\rho} (B_{\rho}(x,2r))
\leq C_{\mu_{\rho}} \mu_{\rho} (B_{\rho}(x,r))
< \infty
\end{equation*}
for every $x \in X$ and $r > 0$.
\end{thm}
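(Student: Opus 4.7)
\medskip\noindent
\textbf{Proof plan.} Set $R := d_{\rho}(x,\infty)$ and let $c_0 = (2 C_1 C_A^2 C_B)^{-1}$ be the threshold constant from Lemma~\ref{lem-13}. I will split the argument according to the size of $r$ relative to $R$. The finiteness of $\mu_\rho(B_\rho(x,2r))$ and positivity of $\mu_\rho(B_\rho(x,r))$ are immediate from $\mu_\rho(X)<\infty$ (Proposition~\ref{balls at infty}) and $\mu(B(x,s))>0$ together with $\rho>0$, so the entire work lies in the doubling inequality.

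\medskip\noindent
\emph{Case 1 (small radii): $r \le c_0 R$.} Here $2r \le 2 c_0 R$, so Lemma~\ref{lem-13} applies to both $B_\rho(x,r)$ and $B_\rho(x,2r)$. It gives
\[
B\bigl(x, a_1 r/\rho(|x|)\bigr) \subset B_\rho(x,r), \qquad B_\rho(x,2r) \subset B\bigl(x, 2 a_2 r/\rho(|x|)\bigr),
\]
and $\rho(|y|) \simeq \rho(|x|)$ throughout $B_\rho(x,2r)$. Therefore
\[
\mu_\rho(B_\rho(x,2r)) \simle \rho(|x|)^{\s}\, \mu\bigl(B(x, 2a_2 r/\rho(|x|))\bigr) \simle \rho(|x|)^{\s}\, \mu\bigl(B(x, a_1 r/\rho(|x|))\bigr) \simle \mu_\rho(B_\rho(x,r)),
\]
where the middle step follows from finitely many applications of the doubling of $\mu$, the number depending only on $a_1,a_2,C_\mu$.

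\medskip\noindent
\emph{Case 2 (large radii): $r > c_0 R$.} By the triangle inequality in $d_\rho$, $B_\rho(x,2r) \subset B_\rho(\infty, 2r+R) \subset B_\rho(\infty, (2+1/c_0)r)$, so it suffices to dominate $\mu_\rho(B_\rho(\infty, (2+1/c_0)r))$ by $\mu_\rho(B_\rho(x,r))$ times a universal constant. I split further.

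If $r \ge 2R$, then $r - R \ge r/2$, so the triangle inequality yields $B_\rho(\infty, r/2) \subset B_\rho(x,r)$. Proposition~\ref{balls at infty}, applied a bounded number of times (depending on $\log_2(2(2+1/c_0))$), gives $\mu_\rho(B_\rho(\infty,(2+1/c_0)r)) \simle \mu_\rho(B_\rho(\infty,r/2)) \le \mu_\rho(B_\rho(x,r))$, which is what we need.

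If $c_0 R < r < 2R$ (the intermediate regime where $r\simeq R$), I apply Lemma~\ref{lem-13} at the boundary radius $c_0 R$ and use Lemma~\ref{constants} (i.e.\ $R/\rho(|x|) \simeq |x|+1$) to obtain $B(x, c(|x|+1)) \subset B_\rho(x, c_0 R) \subset B_\rho(x,r)$ for a universal $c>0$. Combining with finitely many doublings of $\mu$ and the inclusions $B(b,1) \subset B(x,|x|+1) \subset B(b,2|x|+1)$, I get $\mu_\rho(B_\rho(x,r)) \simge \rho(|x|)^{\s} \mu(B(b,|x|+1))$, which by \eqref{rho and measure} equals $\mu_\rho(X\setminus B(b,|x|))$ up to constants. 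Then Lemma~\ref{small balls at infty}, together with the quasidecreasing property of $h$ (Remark~\ref{remark-1}(ii)), shows that for some $s\simeq R$ one has $B_\rho(\infty,s) \subset X\setminus B(b,|x|)$, hence $\mu_\rho(B_\rho(x,r)) \simge \mu_\rho(B_\rho(\infty,s))$. Finally, since both $s$ and $(2+1/c_0)r$ are comparable to $R$ (in this subcase), Proposition~\ref{balls at infty} iterated a bounded number of times yields the matching upper bound.

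\medskip\noindent
\emph{Main obstacle.} The intermediate subcase $c_0 R < r < 2R$ is the most delicate: neither the small-$r$ sandwich nor the direct containment $B_\rho(\infty,r/2)\subset B_\rho(x,r)$ is available. Here one must chain through four distinct descriptions (a $d_\rho$-ball at $x$, a $d$-ball at $x$, a $d$-exterior of $b$, and a $d_\rho$-ball at $\infty$), controlling the transitions via Lemma~\ref{lem-13}, doubling of $\mu$, \eqref{rho and measure}, and Lemma~\ref{small balls at infty}. The smallness hypothesis $s\le \tau_1/C_A$ of Lemma~\ref{small balls at infty} restricts the applicability; fortunately in this subcase $R\simeq r$ is already bounded by the $d_\rho$-diameter of $X$, and if $R$ fails to be small enough one can argue directly from $\mu_\rho(X)<\infty$ and the positivity and quasidecreasing property of $\rho$ to conclude the (then bounded-ratio) doubling inequality without any further estimates.
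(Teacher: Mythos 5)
Your proof is correct, and the decomposition into small / intermediate / large $r$ relative to $R=d_\rho(x,\infty)$ mirrors the paper's three cases; the small-radius case (sandwich via Lemma~\ref{lem-13} plus iterated doubling of $\mu$) and the $r\ge 2R$ case (containment between balls at $\infty$ plus Proposition~\ref{balls at infty}) coincide with the paper's.

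In the intermediate regime $c_0 R < r < 2R$ you take a genuinely different route. The paper establishes the two-sided, $r$-independent comparison $\mu_\rho(B_\rho(x,r))\simeq \rho(|x|)^\s\,\mu(B(x,|x|+1))$ valid on the whole interval $c_0 R\le r\le 4R$; since both $r$ and $2r$ fall in this range, doubling is immediate. The upper estimate there splits on $|x|+1\le M$ versus $|x|+1>M$, using $\mu_\rho(X)<\infty$ in the first case and Condition~\ref{condition-C} in the second. You instead prove only the lower bound $\mu_\rho(B_\rho(x,r))\simge \rho(|x|)^\s\mu(B(b,|x|+1))\simeq\mu_\rho(X\setminus B(b,|x|))\simge\mu_\rho(B_\rho(\infty,c_0 R))$ (via Lemma~\ref{lem-13}, Lemma~\ref{constants}, \eqref{rho and measure} and Lemma~\ref{small balls at infty}), and then close the loop through the already-proved doubling at infinity (Proposition~\ref{balls at infty}) and the containment $B_\rho(x,2r)\subset B_\rho(\infty,(2+1/c_0)r)$. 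This elegantly recycles Proposition~\ref{balls at infty}, but it invokes Lemma~\ref{small balls at infty}, which carries the smallness hypothesis $s\le\tau_1/C_A$; you correctly flag this, though the phrase ``without any further estimates'' in the threshold subcase is an overstatement. One still needs the lower bound $B(x,c(|x|+1))\subset B_\rho(x,r)$ from Lemma~\ref{lem-13}, together with the observation (provable via Lemma~\ref{BaloghLem}) that $R\gtrsim\tau_1$ forces $|x|+1\le 2C_A^{C_AC_B}$, giving $\rho(|x|)\simeq\rho(1)$, so that $\mu_\rho(B_\rho(x,r))\simge\rho(1)^\s\mu(B(b,1))\simge\mu_\rho(X)\ge\mu_\rho(B_\rho(x,2r))$. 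With that gap filled in, the argument is sound. Both routes lean on the same core machinery (Lemmas~\ref{lem-13}, \ref{constants}, \eqref{rho and measure}, doubling of $\mu$); the paper's packaging via a two-sided $r$-independent quantity is a bit cleaner and sidesteps the threshold case, while yours highlights that the doubling-at-$\infty$ estimate already encodes the needed information.
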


\begin{proof}
We get the final inequality the same way as in \eqref{measureofX}.
For the first inequality we get from Lemma \ref{lem-13} that
\begin{align*}
\mu_{\rho}(B_{\rho} (x,r))
&\geq \mu_{\rho}(B_{\rho} (x,\min\{r,2c_0d_{\rho}(x,\infty)\}))
\\
&\geq \mu_{\rho} \left( B \left( x,\frac{a_1 \min\{r,2c_0d_{\rho}(x,\infty)\}}{\rho(|x|)} \right) \right).
\end{align*}
This is positive, because $\rho$ is positive and the $\mu$-measure of the ball is positive, because $a_1 \min\{r,2c_0d_{\rho}(x,\infty)\} / \rho(|x|) > 0$.

The rest of the proof is divided into the following three cases: 
\begin{enumerate}
\item[(1)] $0 < r \leq c_0 d_{\rho}(x,\infty)$,
\item[(2)] $r \geq 2 d_{\rho}(x,\infty)$, 
\item[(3)] $c_0 d_{\rho}(x,\infty) \leq r \leq 2 d_{\rho}(x,\infty)$.
\end{enumerate}
Here $c_0$ is the constant in Lemma \ref{lem-13}.

In Case (1) we get from Lemma \ref{lem-13} and the doubling property of $\mu$ that
\begin{align*}
\mu_{\rho}(B_{\rho}(x,2r))
&\simeq \rho(|x|)^{\s} \mu(B_{\rho}(x,2r))
\leq \rho(|x|)^{\s} \mu \left( B \left( x, \frac{2 a_2 r}{\rho(|x|)} \right) \right)
\\
&\simeq \rho(|x|)^{\s} \mu \left( B \left( x, \frac{a_1 r}{\rho(|x|)} \right) \right)
\leq \rho(|x|)^{\s} \mu(B_{\rho}(x,r))
\simeq \mu_{\rho}(B_{\rho}(x,r)).
\end{align*}
This completes the proof of Case (1).

For Case (2) we have
\begin{equation*}
\mu_{\rho}(B_{\rho}(x,2r))
\leq \mu_{\rho}(B_{\rho}(\infty,3r))
\simle \mu_{\rho}(B_{\rho}(\infty,r/2))
\leq \mu_{\rho}(B_{\rho}(x,r)),
\end{equation*}
where we used Proposition \ref{balls at infty}.

Now we move on to Case (3). We shall show that
\begin{equation}
\label{thirdcase}
\mu_{\rho}(B_{\rho}(x,r))
\simeq \rho(|x|)^{\s} \mu(B(x,|x|+1)),
\end{equation}
if $c_0 d_{\rho}(x,\infty) \leq r \leq 4 d_{\rho}(x,\infty)$. Since the right hand side does not depend on $r$, this will complete the proof. Here the comparison constants depend on $C_U$, $C_{\mu}$, $\s$, $C_A$, $C_B$ and $C_C$.

We start by proving the "$\simle$" part of \eqref{thirdcase}. Thus let us assume that $y \in B_{\rho}(x,r)$.
The first step is to show that there exists a constant $M \geq 2$ that depends only on $C_U$, $C_A$ and $C_B$, such that $|x|+1 \leq M(|y|+1)$. If $|x|+1 \leq 2(|y|+1)$, the claim holds for any $M \geq 2$.
If $|x|+1 \geq 2(|y|+1)$ i.e. $|x| \geq 2|y|+1 \geq |y|$, then we get from Lemmas \ref{BaloghLem} and \ref{lem-4} together with Lemma \ref{constants} that
\begin{align*}
\rho(|y|)(|y|+1)
&\leq C_A d_{\rho}(x,y)
< 4 C_A d_{\rho}(x,\infty)
\leq 4 C_A C_1 \rho(|x|)(|x|+1)
\\
&\leq 4 C_A^2 C_B C_1 \left( \frac{|y|+1}{|x|+1} \right)^{\eps} (|y|+1) \rho(|y|),
\end{align*}
where we also used the fact that $d_{\rho}(x,y) < r \leq 4d_{\rho}(x,\infty)$. Therefore by setting $M := (4 C_A^2 C_B C_1)^{C_A C_B} $ we have $|x|+1 \leq M(|y|+1)$.

The next step is to deal separately with the cases $|x|+1 \leq M$ and $|x|+1 > M$. We start with $|x|+1 \leq M$. In this case we get from Condition \ref{rhodoubling} that $\rho(1) \simeq \rho(|x|)$ and therefore
\begin{align*}
\mu_{\rho}(B_{\rho}(x,r))
&\leq \mu_{\rho}(X)
\simle \rho(1)^{\s} \mu(B(b,1))
\simle \rho(|x|)^{\s} \mu(B(b,|x|+1))
\\
&\leq \rho(|x|)^{\s} \mu(B(x,2(|x|+1)))
\leq \rho(|x|)^{\s} C_{\mu} \mu(B(x,|x|+1)),
\end{align*}
where we used \eqref{measureofX} and the doubling property of $\mu$.

Now let $|x|+1 > M$.
Let us denote $r_x := \frac{1}{M}(|x|+1) - 1 > 0$. Then $|y| \geq r_x$ for every $y \in B_{\rho}(x,r)$ and $\rho(|x|) \simeq \rho(r_x)$ by Condition \ref{rhodoubling}. Thus by Condition \ref{condition-C} and the doubling property of $\mu$
\begin{align*}
\mu_{\rho}(B_{\rho}(x,r))
&= \int_{B_{\rho}(x,r)} \rho(|y|)^{\s} d \mu(y)
\leq \int_{X \setminus B(b,r_x)} \rho(|y|)^{\s} d \mu(y)
\\
&\leq C_C \rho(r_x)^{\s} \mu(B(b,r_x+1))
= C_C \rho(r_x)^{\s} \mu \left( B \left( b,\frac{1}{M}(|x|+1) \right) \right)
\\
&\simeq \rho(|x|)^{\s} \mu(B(x,|x|+1))
\end{align*}
with the comparison constants depending on $M$, $C_{\mu}$, $\s$, $C_A$ and $C_C$.
Thus we have shown the "$\simle$" part of \eqref{thirdcase}.

Now let us prove the "$\simge$" part of \eqref{thirdcase}.
With $C_2$ the constant in Lemma \ref{lem-3} we show that
\begin{equation*}
B \Big( x,\tfrac{c_0}{C_A C_2}(|x|+1) \Big)
\subset B_{\rho}(x,r).
\end{equation*}
So assume that $y \in B \left( x,\tfrac{c_0}{C_A C_2}(|x|+1) \right)$.
Then by the triangle inequality
\begin{equation*}
\frac{1}{2}(|x|+1)
\leq |y|+1
\leq 2(|x|+1),
\end{equation*}
because $\frac{c_0}{C_A C_2} \leq \frac{1}{2}$.
Thus by Condition \ref{rhodoubling} we have $\rho(|y|) \simeq \rho(|x|)$.
We get from Lemmas \ref{lem-3} and \ref{lem-4} that
\begin{equation*}
d_{\rho}(x,y)
\leq C_2 \rho(|x|) d(x,y)
< \frac{c_0}{C_A} \rho(|x|) (|x|+1)
\leq c_0 d_{\rho}(x,\infty)
\leq r.
\end{equation*}
Therefore $B \left( x,\frac{c_0}{C_A C_2}(|x|+1) \right) \subset B_{\rho}(x,r)$ and thus
\begin{align*}
\mu_{\rho}(B_{\rho}(x,r))
&= \int_{B_{\rho}(x,r)} \rho(|y|)^{\s} d \mu(y)
\geq \int_{B \left( x,\frac{c_0}{C_A C_2}(|x|+1) \right) } \rho(|y|)^{\s} d \mu(y)
\\
&\simeq \rho(|x|)^{\s} \mu \left( B \left( x,\tfrac{c_0}{C_A C_2}(|x|+1) \right) \right)
\simeq \rho(|x|)^{\s} \mu(B(x,|x|+1)),
\end{align*}
where we used the doubling property of $\mu$. This completes the proof of \eqref{thirdcase}.
\end{proof}

\section{On preservation of Poincar\'e inequalities}
\label{sect-poincare}

One of the principal foci of this paper is the geometric-analytic notion of Poincar\'e inequality. Given the key role this inequality plays in much of non-smooth analysis, we are interested in establishing that sphericalization of a metric measure space supporting a Poincar\'e inequality results in a space that also supports a Poincar\'e inequality. The upper gradient structure referred to in the Poincar\'e inequality is classically played by the magnitude of the weak derivative of a Sobolev function in Euclidean spaces. The advantage of the notion of upper gradients is that there is no need of a smooth structure, but the upper gradient does depend on the underlying metric on the space. In this section we will also describe how the upper gradients transform when the metric is transformed from $d$ to $d_\rho$.

We now define the notion of supporting a $p$-Poincar\'e inequality.
In what follows, if $(Z,d_Z)$ is a metric space and $u$ is an extended real-valued function on $Z$, we say that a non-negative Borel measurable function $g$ on $Z$ is an \emph{upper gradient} of $u$, if for each non-constant compact rectifiable curve $\gamma:[s,t]\to Z$ we have that
\[
|u(\gamma(t))-u(\gamma(s))|\le \int_\gamma g(\cdot)ds_{\textrm{\tiny Z}},
\]
where, whenever at least one of $|u(\gamma(s))|$ and $|u(\gamma(t))|$ is infinite, the above is interpreted also to mean that $\int_\gamma g(\cdot)ds_\textrm{\tiny Z}=\infty$.
The notation $ds_{\textrm{\tiny Z}}$ in the path integral means that we are calculating the path integral in $Z$ with respect to the metric $d_Z$.
The notion of upper gradients is due to Heinonen and Koskela~\cite[Section~2.9]{HK98}.
If $B=B_Z(z,r):=\{x \in Z : d_Z(z,x)<r\}$ is a ball, we will use the notation $\lambda B=B_Z(z,\lambda r)$ and the number $u_{B}$ is defined by
\begin{equation*}
u_{B}
:= \vint_{B} u(x)\, d\mu_Z(x)
:= \tfrac{1}{\mu_Z(B)}\, \int_{B}u(x)d\mu_Z(x).
\end{equation*}

\begin{deff}
Let $1\le p<\infty$.
Let $(Z,d_Z,\mu_Z)$ be a metric measure space with $\mu_Z$ a Borel measure supported on $Z$. We say that this metric measure space $Z$ \emph{supports a $p$-Poincar\'e inequality}, if there exist constants $C_P>0$ and $\lambda\ge 1$ such that for every ball $B=B_Z(z,r)$,
\begin{equation}
\label{eq-poincare}
\vint_{B}|u(x)-u_{B}|\, d\mu_Z(x) \le C_P\, r\, \left(\vint_{\lambda B}g(x)^p\, d\mu_Z(x) \right)^{1/p},
\end{equation}
whenever $z\in Z$, $r>0$, $u\in L^1(\lambda B)$ and $g$ is an upper gradient of $u$ in the ball $\lambda B$.
By a truncation argument we can see that if the Poincar\'e inequality holds for bounded measurable functions, then it holds for all measurable functions and hence the Poincar\'e inequality implies that any measurable function that has an upper gradient in $L^p(\lambda B)$ is necessary also in $L^1(B)$. 
\end{deff}

The inequality \eqref{eq-poincare} is equivalent to the following, if we allow the constant $C_P$ to change: there is a constant $C_P>0$ such that
\[
\inf_{c\in\mathbb{R}}\vint_{B}|u(x)-c|\, d\mu_Z(x) \le C_P \, r\, \left(\vint_{\lambda B}g(x)^p\, d\mu_Z(x) \right)^{1/p}.
\]

Throughout this section, we continue to study $(X,d_\rho, \mu_\rho)$ with the assumption that $(X,d,\mu)$ is doubling and uniform and supports a $p$-Poincar\'e inequality with constants $C_P$ and $\lambda$. We also assume that $\rho$ is lower semicontinuous and satisfies Conditions~\ref{rhodoubling}, \ref{upperbound} and~\ref{condition-C}.
The goal of this section is to use the arguments developed in~\cite{BSh-Uniform} together with Theorem~\ref{thm-uniformity}, Lemma~\ref{lem-13} and Theorem~\ref{thm-doubling} above to conclude that $(X,d_\rho,\mu_\rho)$ supports a $p$-Poincar\'e inequality as well.

\subsection{Path integrals under lower semicontinuity of \texorpdfstring{$\rho$}{}}

In this subsection we show some properties of path integrals.
Our assumptions on $\rho$ allow us to use Proposition~\ref{homeo} to infer that the topology induced on $X$ by the metric $d$ and the topology induced on $X$ by the metric $d_\rho$ coincide.

\begin{lem}\label{lem:lsc}
If $\gamma$ is a rectifiable curve,
then
\[
\ell_\rho(\gamma)=\int_\gamma\rho(|\cdot|)\, ds.
\]
\end{lem}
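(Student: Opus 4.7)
The inequality $\ell_\rho(\gamma)\le\int_\gamma\rho(|\cdot|)\,ds$ is already recorded in~\eqref{rholength}, so the content of the lemma is the reverse direction. My plan is to derive $\ell_\rho(\gamma)\ge\int_\gamma\rho(|\cdot|)\,ds$ by a Riemann-sum argument built on top of Lemma~\ref{lem-10}, using lower semicontinuity of $\rho$ to take $\delta'=\delta$ and $R=r$ in that lemma, as noted just after its proof.

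First I would parametrize $\gamma\colon[0,L]\to X$ by arc-length with respect to $d$, so that $\ell_d(\gamma|_{[s,t]})=t-s$ for $0\le s\le t\le L=\ell_d(\gamma)$. Compactness of the trajectory in $X$ together with $b\in\partial X$ yields constants with $0<\delta_0\le|\gamma(t)|\le r_0<\infty$ for all $t\in[0,L]$. The core subclaim I aim for is
\begin{equation*}
\ell_\rho(\gamma|_{[s,t]}) \;\ge\; \Bigl(\inf_{u\in[s,t]}\rho(|\gamma(u)|)\Bigr)\,(t-s), \quad 0\le s\le t\le L.
\end{equation*}
To prove this I fix an arbitrary subpartition $s=s_0<s_1<\dots<s_m=t$ and apply Lemma~\ref{lem-10} to each piece $\gamma|_{[s_{k-1},s_k]}$ with $\delta_k:=\min_{u\in[s_{k-1},s_k]}|\gamma(u)|>0$ and $r_k:=\max_{u\in[s_{k-1},s_k]}|\gamma(u)|$ (the lsc hypothesis allows $\delta'=\delta_k$ and $R=r_k$). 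Continuity of $u\mapsto|\gamma(u)|$ gives $\{|\gamma(u)|:u\in[s_{k-1},s_k]\}=[\delta_k,r_k]$, so
\begin{equation*}
\inf_{\delta_k\le r'\le r_k}\rho(r')=\inf_{u\in[s_{k-1},s_k]}\rho(|\gamma(u)|)\ge\inf_{u\in[s,t]}\rho(|\gamma(u)|).
\end{equation*}
Combining these lower bounds with the additivity $\ell_\rho(\gamma|_{[s,t]})=\sum_k\ell_\rho(\gamma|_{[s_{k-1},s_k]})$ produces
\begin{equation*}
\ell_\rho(\gamma|_{[s,t]}) \;\ge\; \Bigl(\inf_{u\in[s,t]}\rho(|\gamma(u)|)\Bigr)\sum_{k=1}^m d(\gamma(s_{k-1}),\gamma(s_k)),
\end{equation*}
and taking the supremum over subpartitions converts the sum on the right into $\ell_d(\gamma|_{[s,t]})=t-s$, establishing the subclaim.

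With the subclaim in hand, applying it to the pieces of a partition $0=t_0<\dots<t_n=L$ and summing gives
\begin{equation*}
\ell_\rho(\gamma)=\sum_{i=1}^n\ell_\rho(\gamma|_{[t_{i-1},t_i]})\;\ge\; \sum_{i=1}^n\inf_{u\in[t_{i-1},t_i]}\rho(|\gamma(u)|)\cdot(t_i-t_{i-1}),
\end{equation*}
and the remaining step is to let the mesh of the partition shrink to $0$ and identify the limit of the right-hand side with $\int_0^L\rho(|\gamma(u)|)\,du=\int_\gamma\rho(|\cdot|)\,ds$. Boundedness of $\rho$ (Remark~\ref{remark-1}(i)) and lower semicontinuity of $t\mapsto\rho(|\gamma(t)|)$ (composition of the lsc function $\rho$ with the continuous function $|\gamma(\cdot)|$) make this a standard measure-theoretic fact: the auxiliary functions $f_\eta(t):=\inf_{|u-t|<\eta}\rho(|\gamma(u)|)$ increase pointwise to $\rho(|\gamma(\cdot)|)$ as $\eta\downarrow 0$, so monotone convergence upgrades the lower Darboux sums to the Lebesgue integral. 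The principal substantive step is the refinement trick inside the subclaim, which upgrades the $d$-distance bound of Lemma~\ref{lem-10} to an $\ell_d$-bound and thereby produces a usable Riemann sum; the rest is bookkeeping.
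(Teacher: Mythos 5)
Your argument is correct, but it takes a genuinely different route from the paper. The paper's proof is short and structural: since $\rho$ is lower semicontinuous it is the pointwise increasing limit of positive continuous functions $\rho_k$; for each continuous $\rho_k$ the equality $\ell_{\rho_k}(\gamma)=\int_\gamma\rho_k(|\cdot|)\,ds$ is quoted from \cite[Proposition A.7]{BHK} (using quasiconvexity of $(X,d)$), and the Monotone Convergence Theorem together with $\ell_{\rho_k}(\gamma)\le\ell_\rho(\gamma)$ and \eqref{rholength} gives the claim. You instead prove the reverse inequality directly by a Riemann-sum argument: the refinement trick upgrading Lemma~\ref{lem-10} from a $d(x,y)$-bound to an $\ell_d$-bound, additivity of $\ell_\rho$, and then lower semicontinuity of $t\mapsto\rho(|\gamma(t)|)$ plus monotone convergence to pass from lower Darboux-type sums to the integral. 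Your route is more self-contained (it avoids the approximation by continuous functions and the black-box appeal to \cite{BHK}), at the cost of being longer; the paper's route is shorter but leans on an external result. Both are sound; note only that your $\gamma$ must be rectifiable with respect to $d_\rho$ to invoke Lemma~\ref{lem-10}, which follows from Proposition~\ref{rectifiability} (or is moot, since otherwise $\ell_\rho(\gamma)=\infty$).

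One caveat: you justify taking $\delta'=\delta_k$ and $R=r_k$ in Lemma~\ref{lem-10} by citing the remark following that lemma, but in the paper that remark is itself flagged with ``see Lemma~\ref{lem:lsc} below,'' so quoting it verbatim inside a proof of Lemma~\ref{lem:lsc} risks circularity. The fix is immediate and independent of Lemma~\ref{lem:lsc}: apply Lemma~\ref{lem-10} with $\delta'_n\uparrow\delta_k$ and $R_n\downarrow r_k$, pick near-minimizers $r_n\in[\delta'_n,R_n]$ of $\rho$, extract a convergent subsequence $r_n\to r^*\in[\delta_k,r_k]$, and use lower semicontinuity to get $\lim_n\inf_{[\delta'_n,R_n]}\rho\ge\rho(r^*)\ge\inf_{[\delta_k,r_k]}\rho$; this yields exactly the endpoint case you use. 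With that one-line patch your proof is complete.
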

\begin{proof}
As $\rho\colon (0,\infty)\to (0,\infty)$ is lower semicontinuous, there exists a monotone increasing sequence of positive continuous functions $(\rho_k)_k$ that converges pointwise to $\rho$ (see for example \cite[Exercise~22, p.~60]{Rudin}).
Since $\rho_k$ are positive and continuous and $(X,d)$ is quasiconvex we can apply \cite[Proposition A.7]{BHK} to obtain
  \[
  \ell_{\rho_k}(\gamma)=\int_\gamma \rho_k(|\cdot|) ds\longrightarrow_{k \to \infty} \int_\gamma \rho(|\cdot|) ds,
  \]
  where the convergence follows from the Monotone Convergence Theorem. As $\rho_k\le \rho$, we have $\ell_{\rho_k}(\gamma)\le \ell_\rho(\gamma)$ for every $k$. Consequently, we have
  \[
  \int_\gamma \rho(|\cdot|) \,ds\le \ell_\rho(\gamma),
  \]
  which completes the proof as by \eqref{rholength} the opposite inequality holds as well.
\end{proof}

In the following we will use the notation $ds_\rho$ in path integrals for specifying that we are calculating the path integral in $X$ with respect to the metric $d_\rho$, and we continue to use $ds$ when we are calculating the path integral in $X$ with respect to the metric $d$.

\begin{lem}\label{lem:lsc-rho}
If $\gamma$ is a rectifiable curve in $(X,d)$ and $g$ is a non-negative Borel measurable function on $(X,d)$, then 
\[
\int_\gamma g(\cdot)\,ds_\rho=\int_\gamma \rho(|\cdot|) g(\cdot)\,ds.
\]
\end{lem}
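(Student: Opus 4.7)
The plan is to realize both integrals as Lebesgue integrals on $[0,\ell_d(\gamma)]$ via a change of variables comparing the $d$-arc-length and $d_\rho$-arc-length parametrizations of $\gamma$. The key input is Lemma~\ref{lem:lsc}, which lets us compute $d_\rho$-lengths of subcurves as path integrals of $\rho(|\cdot|)$ with respect to $d$.

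First I would parametrize $\gamma$ by arc-length with respect to $d$, obtaining $\gamma_d\colon[0,\ell_d(\gamma)]\to X$, which is $1$-Lipschitz in $(X,d)$ and therefore absolutely continuous. Define the $d_\rho$-length function $s_\rho(t):=\ell_\rho(\gamma_d|_{[0,t]})$. Applying Lemma~\ref{lem:lsc} to $\gamma_d|_{[0,t]}$ together with the defining formula for the path integral with respect to $d$ yields
\[
s_\rho(t)=\int_{\gamma_d|_{[0,t]}}\rho(|\cdot|)\,ds=\int_0^t \rho(|\gamma_d(u)|)\,du.
\]
Thus $s_\rho$ is absolutely continuous on $[0,\ell_d(\gamma)]$ with $s_\rho'(u)=\rho(|\gamma_d(u)|)$ for almost every $u$. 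Since $\rho>0$, the map $s_\rho$ is strictly increasing and is a homeomorphism from $[0,\ell_d(\gamma)]$ onto $[0,\ell_\rho(\gamma)]$. By uniqueness of arc-length reparametrization, the $d_\rho$-arc-length parametrization satisfies $\gamma_\rho=\gamma_d\circ s_\rho^{-1}$.

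Finally, I would combine these ingredients with a change of variables. By the definition of the path integral with respect to $d_\rho$,
\[
\int_\gamma g\,ds_\rho=\int_0^{\ell_\rho(\gamma)} g(\gamma_\rho(v))\,dv.
\]
Substituting $v=s_\rho(u)$, $dv=s_\rho'(u)\,du=\rho(|\gamma_d(u)|)\,du$, I obtain
\[
\int_0^{\ell_d(\gamma)} g(\gamma_d(u))\,\rho(|\gamma_d(u)|)\,du,
\]
which by the defining formula for the path integral with respect to $d$ is exactly $\int_\gamma \rho(|\cdot|)g\,ds$.

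The main technical issue I expect is justifying the change of variables when $g$ is only non-negative Borel measurable rather than continuous. This is handled by a standard monotone-class / simple-function argument: the substitution formula for absolutely continuous strictly increasing functions is immediate when $g$ is the indicator of an open set (by monotone approximation from within by compact sets and continuity of $s_\rho$), hence extends to Borel sets by Dynkin's $\pi$-$\lambda$ theorem, then to non-negative simple functions by linearity, and finally to all non-negative Borel $g$ by monotone convergence. Once this is in place, the computation above completes the proof.
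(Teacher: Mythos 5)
Your proof is correct and follows essentially the same route as the paper: parametrize $\gamma$ by $d$-arc-length, use Lemma~\ref{lem:lsc} to identify $s_\rho(t)=\int_0^t\rho(|\gamma_d(u)|)\,du$ and hence $s_\rho'=\rho(|\gamma_d(\cdot)|)$ a.e., then convert the $d_\rho$-path integral into a weighted integral over $[0,\ell_d(\gamma)]$. The only difference is that the paper invokes its already-recorded change-of-variables identity~\eqref{eqn:pathintegral} for absolutely continuous curves (with integrability of $\rho(|\gamma_d(\cdot)|)$, i.e.\ rectifiability in $(X,d_\rho)$, supplied by Proposition~\ref{rectifiability}), whereas you re-derive that step by explicitly inverting $s_\rho$ and running a monotone-class argument, which is harmless but more work than needed.
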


\begin{proof}
Let $\gamma\colon[0,\ell_d(\gamma)]\to X$ be the arc-length parametrization of $\gamma$ with respect to the metric $d$. 
Let $s_\rho$ denote its associated length function with respect to the metric $d_\rho$, i.e. $s_\rho(t)=\ell_\rho(\gamma|_{[0,t]})=\int_0^t\rho(|\gamma(\tau)|)\,d\tau$, where the last equality follows from Lemma~\ref{lem:lsc}. 
Since by Proposition~\ref{rectifiability} the curve  $\gamma$ is rectifiable in $(X,d_{\rho})$, the function $\rho(|\gamma(\cdot)|)$ is integrable on $[0,\ell_{d}(\gamma)]$ and thus $s_{\rho}$ is absolutely continuous.
This implies that the  parametrization of $\gamma$ is absolutely continuous with respect to $d_\rho$. 
Since $s_\rho'(t)=\rho(|\gamma(t)|)$ almost everywhere, we obtain by~\eqref{eqn:pathintegral}
    \begin{equation*}
        \int_\gamma g(\cdot)\,ds_\rho=\int_0^{\ell_d(\gamma)}g(\gamma(\tau))s_\rho'(\tau)\,d\tau
        =\int_0^{\ell_d(\gamma)}g(\gamma(\tau))\rho(|\gamma(\tau)|)\,d\tau=\int_\gamma\rho(|\cdot|)g(\cdot)\,ds.
        \qedhere
    \end{equation*}
\end{proof}

The next proposition is an immediate consequence of Lemma~\ref{lem:lsc-rho} and Definition~\ref{murhodef}.
The proposition is of independent interest as it demonstrates how weak upper gradients transform under conformal change in the metric, if the metric is given by a density function $\rho$ that is lower semicontinuous. Thus it is applicable even in cases where $\rho$ may not satisfy any of Conditions~\ref{condA}--\ref{condC}.

\begin{prop}\label{prop:ug-transform}
Let $u:X\to\mathbb{R}$ and $g:X\to[0,\infty)$ be two measurable functions with $g$ a Borel measurable function. Then the following hold true:
\begin{enumerate}
\item $g\in L^p(X,\mu)$ if and only if $\rho(|\cdot|)^{-\s/p} g(\cdot)\in L^p(X,\mu_\rho)$.
\item $g$ is an upper gradient of $u$ in $(X,d_\rho)$ if and only if $\rho(|\cdot|)\, g(\cdot)$ is an upper gradient of $u$ in $(X,d)$.
\end{enumerate}
\end{prop}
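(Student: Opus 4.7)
The plan is to observe that both parts are essentially bookkeeping given the tools already established: part (1) follows from the definition of $\mu_\rho$, and part (2) follows from Lemma \ref{lem:lsc-rho} combined with the fact (Proposition \ref{rectifiability}) that rectifiability is the same under $d$ and $d_\rho$.

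For part (1), I would simply compute directly using Definition \ref{murhodef}. For any measurable non-negative $g$,
\begin{equation*}
\int_X \bigl(\rho(|x|)^{-\s/p}\, g(x)\bigr)^p\, d\mu_\rho(x)
= \int_X \rho(|x|)^{-\s}\, g(x)^p\, \rho(|x|)^\s\, d\mu(x)
= \int_X g(x)^p\, d\mu(x),
\end{equation*}
so the two $L^p$-norms are equal, and the equivalence is immediate.

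For part (2), I would first recall that by Proposition \ref{rectifiability} the class of compact rectifiable curves in $(X,d)$ coincides with the class of compact rectifiable curves in $(X,d_\rho)$; in particular, the upper gradient inequality must be checked over the same collection of curves in both settings. Then Lemma \ref{lem:lsc-rho} gives the key identity
\begin{equation*}
\int_\gamma g(\cdot)\, ds_\rho = \int_\gamma \rho(|\cdot|)\, g(\cdot)\, ds
\end{equation*}
for every rectifiable curve $\gamma$ and every non-negative Borel function $g$. If $g$ is an upper gradient of $u$ in $(X,d_\rho)$, then for any non-constant compact rectifiable curve $\gamma\colon[s,t]\to X$ we have $|u(\gamma(t))-u(\gamma(s))| \le \int_\gamma g\, ds_\rho = \int_\gamma \rho(|\cdot|)\, g\, ds$, so $\rho(|\cdot|)\, g$ is an upper gradient of $u$ in $(X,d)$. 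The converse direction is exactly the same computation read backwards.

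There is no real obstacle in this proof; the only minor point to watch is the convention for the case where $|u(\gamma(s))|$ or $|u(\gamma(t))|$ is infinite, but the identity of Lemma \ref{lem:lsc-rho} holds with both sides interpreted as $+\infty$ simultaneously (since $\rho>0$), so the convention transfers cleanly between the two metrics. Note also that Borel measurability of $\rho(|\cdot|)\, g$ is inherited from Borel measurability of $g$ and of $\rho$, which is in force throughout the paper.
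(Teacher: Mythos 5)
Your proof is correct and follows exactly the route the paper intends: the paper states only that the proposition is an immediate consequence of Lemma~\ref{lem:lsc-rho} and Definition~\ref{murhodef}, and your argument is precisely the spelled-out version of that claim, with part (1) a direct change of density under $d\mu_\rho=\rho(|\cdot|)^\s d\mu$ and part (2) a one-line application of Lemma~\ref{lem:lsc-rho} over the (by Proposition~\ref{rectifiability}) identical class of rectifiable curves. The only addition beyond the paper's terse remark is your care with the $\pm\infty$ convention and Borel measurability, both of which are harmless and correct.
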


It follows from the above proposition that if we want to use the tools of sphericalization to study $p$-minimizers,  the choice $\sigma=p$ is useful as then the $p$-minimizers are preserved in $X$. However, when we discuss the preservation of $p$-Poincar\'e inequality, the result holds for all $p\ge 1$ regardless of the choice of $\sigma$ as long as Condition~\ref{condC} is satisfied.
Notice that as Conditions~\ref{condA} and~\ref{condB} imply that $\rho$ is quasidecreasing, it is easy to see that if Condition~\ref{condC} holds with some $\sigma$, then it holds also with all larger values of~$\sigma$.

\subsection{Preserving the \texorpdfstring{$p$}{}-Poincaré inequality}

\begin{lem}\label{lem:GoIn}
Let $z\in X$ and $0<r \leq 2\diam_\rho(X)$. 
Then there exists $z_0\in B_{\rho}(z,r)$ such that $d_{X,\rho}(z_0)\ge r/(16C_{U,\rho})$.
\end{lem}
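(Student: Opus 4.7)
The plan is to realize $z_0$ as a strict interior point of a $C_{U,\rho}$-uniform curve in $(X,d_\rho)$ that starts at $z$ and has $d_\rho$-length at least $r/8$; the twisted cone condition will then automatically deliver the required lower bound on $d_{X,\rho}(z_0)$.

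The first step will be to find a target point $w \in \overline{X}^{d_\rho}$ with $d_\rho(z,w) \ge r/8$. Since $r \le 2\diam_\rho(X)$ we have $\diam_\rho(X) \ge r/2$, and a standard triangle-inequality argument applied to a near-diameter pair in $X$ shows that
\[
\sup_{w \in \overline{X}^{d_\rho}} d_\rho(z,w) \;\ge\; \tfrac{1}{2}\diam_\rho(X) \;\ge\; r/4,
\]
so such a $w$ exists. (This radius-versus-diameter comparison is the only place where the hypothesis $r \le 2\diam_\rho(X)$ enters the argument, and is where I expect the mild subtlety to lie.) Next, using Theorem~\ref{thm-uniformity} which tells us that $(X,d_\rho)$ is $C_{U,\rho}$-uniform, together with Theorem~\ref{thm-extension} to allow the target to be a boundary point, I would connect $z$ to $w$ by a $C_{U,\rho}$-uniform curve $\gamma$ in $(X,d_\rho)$ and reparametrize it by $d_\rho$-arclength as $\gamma\colon[0,L]\to\overline{X}^{d_\rho}$, with $L = \ell_\rho(\gamma) \ge d_\rho(z,w) \ge r/8$. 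Then set $z_0 := \gamma(r/16)$.

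It remains to check the two required properties of $z_0$. Since $r/16 < L$, the point $z_0$ is a strict interior point of $\gamma$; the twisted cone condition then forces $d_{X,\rho}(z_0) > 0$, so $z_0 \in X$. The arclength bound gives $d_\rho(z,z_0) \le \ell_\rho(\gamma_{zz_0}) = r/16 < r$, so $z_0 \in B_\rho(z,r)$. Applying the twisted cone condition at $z_0$ one more time,
\[
\min\{\ell_\rho(\gamma_{zz_0}),\,\ell_\rho(\gamma_{z_0 w})\} \;=\; \min\{r/16,\,L - r/16\} \;\le\; C_{U,\rho}\, d_{X,\rho}(z_0),
\]
and because $L \ge r/8$ we have $L - r/16 \ge r/16$, so the minimum equals $r/16$, yielding the desired bound $d_{X,\rho}(z_0) \ge r/(16\, C_{U,\rho})$. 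The rest of the argument is a textbook corkscrew construction and should require no additional input.
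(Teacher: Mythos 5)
Your proof is correct and follows essentially the same ``corkscrew'' construction as the paper: pick a far point $w$, connect $z$ to $w$ by a $C_{U,\rho}$-uniform curve, walk a $d_\rho$-arclength $r/16$ along it, and invoke the twisted cone condition. The only cosmetic difference is that the paper picks $w \in X$ inside the annulus $B_\rho(z,r)\setminus B_\rho(z,r/8)$ (so the whole curve stays in $X$ and Theorem~\ref{thm-extension} is not needed), whereas you allow $w \in \overline{X}^{d_\rho}$ with $d_\rho(z,w)\ge r/8$ and then verify $z_0 \in X$ via the twisted cone condition; both routes are valid.
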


\begin{proof}
Since $r\le 2 \diam_\rho(X)$, there is $w\in B_\rho(z,r)\setminus B_\rho(z,\frac18r)$. Let $\gamma$ be a $C_{U,\rho}$-uniform curve in $(X,d_\rho)$ with end points $w$ and $z$. Thus $\ell_\rho(\gamma)\ge \frac18r$ and there exists $z_0$ in the trajectory of the curve such that $\frac{1}{16}r=\ell_\rho(\gamma_{zz_0})\le\ell_\rho(\gamma_{z_0w})$. Now $z_0\in B_\rho(z,r)$ and by the twisted cone condition
\begin{equation*}
    d_{X,\rho}(z_0)\ge \frac{1}{C_{U,\rho}}\ell_\rho(\gamma_{zz_0})=\frac{1}{16C_{U,\rho}}r.\qedhere
\end{equation*}
\end{proof} 

We recall the following key lemma from~\cite{BSh-Uniform}.

\begin{lem}[{\cite[Lemma~4.3]{BSh-Uniform}}]
\label{lem:BomanChain}
Let $\tau\ge1$, $r>0$, $z_0\in X$ and $0<\rho_0\le \frac{d_{X,\rho}(z_0)}{16\tau C_{U,\rho}}$, where $C_{U,\rho}$ is the uniformity constant of $(X,d_\rho)$. Then every $x\in B_\rho(z_0,2r)$ can be connected to the ball $B_{0,0}:=B_\rho(z_0,\rho_0)$ by a chain of balls $\{B_{i,j}\, :\, i\in\mathbb{N}\cup\{0\}\text{ and }j=0,\cdots, m_i\}$ satisfying the following conditions:
\begin{enumerate}
\item for $i=0,1,\cdots$, the radius $rad(B_{i,j})=2^{-i}\rho_0$ and the ball $B_{i,j}$ has center $x_{i,j}$ such that $d_{X,\rho}(x_{i,j})\ge2^{2-i}\tau \rho_0$ and $d_\rho(x_{i,j},x) \le 2^{-i}C_{U,\rho}\, d_\rho(x,z_0)<2^{1-i}C_{U,\rho}r$ for $j=0,\cdots, m_i$,
\item for each $i=0,1,\cdots$, we have that $m_i\le 2C_{U,\rho}r/\rho_0$,
\item there is some positive integer $i_x$ such that whenever $i\ge i_x$ we have that $m_i=0$ and $x_{i,0}=x$,
\item with the lexicographic ordering of these balls, if $B_{i,j}^*$ is a neighbor of $B_{i,j}$ in this ordering, then $B_{i,j}\cap B_{i,j}^*$ is nonempty.
\end{enumerate}
The radii referred to above are with respect to the metric $d_\rho$.
\end{lem}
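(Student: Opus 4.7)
The plan is a Whitney-type chain construction along a uniform curve in $(X, d_\rho)$. By Theorem~\ref{thm-uniformity} and Theorem~\ref{thm-extension}, $(X, d_\rho)$ is $C_{U,\rho}$-uniform and admits a $C_{U,\rho}$-uniform curve $\gamma$ from $z_0$ to $x$; I would parametrize $\gamma$ by $d_\rho$-arc length with $\gamma(0) = x$, $\gamma(L) = z_0$, so $L \le C_{U,\rho} d_\rho(x, z_0) < 2 C_{U,\rho} r$. The twisted cone condition then provides the pointwise lower bound $d_{X,\rho}(\gamma(s)) \ge \min(s, L - s)/C_{U,\rho}$, and near $z_0$ the supplementary estimate $d_{X,\rho}(\gamma(s)) \ge d_{X,\rho}(z_0) - (L - s) \ge 16\tau C_{U,\rho}\rho_0 - (L - s)$ is available via the triangle inequality.

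With $r_i := 2^{-i}\rho_0$ and $\alpha_i := 2^{2-i}\tau C_{U,\rho}\rho_0$, I would place the level-$i$ centers along $\gamma$ at arc-length parameters in $[\alpha_i, \alpha_{i-1}]$ for $i \ge 1$ (and in $[\alpha_0, L]$ for $i = 0$), with spacing $r_i$. This requires at most $4\tau C_{U,\rho}+1$ balls per level for $i \ge 1$ and at most $L/\rho_0 \le 2 C_{U,\rho} r/\rho_0$ balls for $i = 0$, yielding (b) (the case $r\le\rho_0$ is trivial since then $x\in B_{0,0}$). The boundary-distance bound $d_{X,\rho}(x_{i,j}) \ge 2^{2-i}\tau\rho_0$ follows from the twisted cone inequality because $s_{i,j} \ge \alpha_i$, with the supplementary estimate taking over near $z_0$ in level $0$. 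The distance-to-$x$ bound $d_\rho(x_{i,j}, x) \le s_{i,j} \le \alpha_{i-1}$, combined with the standing hypothesis $\rho_0 \le d_\rho(x, z_0)/(16\tau C_{U,\rho})$, gives $d_\rho(x_{i,j}, x) \le 2^{-i} C_{U,\rho} d_\rho(x, z_0)$, completing (a). Consecutive balls within a level overlap by the choice of spacing; across levels, choosing $x_{i+1, 0} := x_{i, m_i} = \gamma(\alpha_i)$ makes the first ball of level $i+1$ nested in the last ball of level $i$, so condition (d) is maintained.

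For (c), once $i$ is large enough that $d_{X,\rho}(x) \ge 2^{2-i}\tau\rho_0$, the point $x$ itself is an admissible center at scale $r_i$, and I would switch to the nested sequence $B_\rho(x, r_i)$ for all such $i$. The main obstacle is arranging the transition from the Whitney chain on $\gamma$ to these terminal balls: the last Whitney center, lying at $d_\rho$-distance $\alpha_{i_x - 1} = 4\tau C_{U,\rho} r_{i_x - 1}$ from $x$, does not directly overlap $B_\rho(x, r_{i_x})$. The remedy is to bridge the gap by adding intermediate centers near $x$ that exploit the triangle-inequality bound $d_{X,\rho}(\gamma(s)) \ge d_{X,\rho}(x) - s$ in addition to the twisted cone bound; once $d_{X,\rho}(x)$ is sufficiently large relative to the terminal $r_i$, these intermediate balls can be packed so that the chain survives. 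Verifying this terminal bridge with explicit constants is the only non-routine bookkeeping; everything else reduces to the two boundary-distance bounds supplied by uniformity of $(X, d_\rho)$ and the standing hypothesis on $\rho_0$.
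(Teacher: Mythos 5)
The paper offers no proof of its own here — Lemma~\ref{lem:BomanChain} is quoted as {\cite[Lemma~4.3]{BSh-Uniform}}. Your Whitney-chain-along-a-uniform-curve strategy is exactly the standard approach from that reference, so the plan is right. However, there are two genuine gaps in the execution.

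First, you misquote the standing hypothesis as $\rho_0 \le d_\rho(x,z_0)/(16\tau C_{U,\rho})$; the lemma actually assumes $\rho_0 \le d_{X,\rho}(z_0)/(16\tau C_{U,\rho})$. The quantity that is bounded below by $\rho_0$ is the \emph{boundary distance} of $z_0$, not the distance from $z_0$ to the target point $x$, and $d_\rho(x,z_0)$ can be arbitrarily small compared to $\rho_0$. Your level-$i$ centers sit at $d_\rho$-arc length between $\alpha_i = 2^{2-i}\tau C_{U,\rho}\rho_0$ and $\alpha_{i-1} = 2^{3-i}\tau C_{U,\rho}\rho_0$, so the bound $d_\rho(x_{i,j},x) \le \alpha_{i-1}$ produces the required $2^{-i}C_{U,\rho}\,d_\rho(x,z_0)$ only if $8\tau\rho_0 \le d_\rho(x,z_0)$, which is precisely what the (misquoted) hypothesis would grant but the real one does not. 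The case you wave away as trivial ($r \le \rho_0$, hence $x \in B_{0,0}$) is not the relevant dichotomy either, since $x \in B_\rho(z_0,2r)$ allows $d_\rho(x,z_0) < \rho_0$ even when $r$ is large. You need a genuine case split: when $d_\rho(x,z_0) < 8\tau\rho_0$, the triangle inequality gives $d_{X,\rho}(w) \ge d_{X,\rho}(z_0) - d_\rho(w,z_0) \ge 16\tau C_{U,\rho}\rho_0 - C_{U,\rho}d_\rho(x,z_0) \ge 8\tau C_{U,\rho}\rho_0$ for every $w$ on $\gamma$, so the boundary-distance bound in (a) holds trivially at \emph{every} level, and one can then use cut-offs proportional to $2^{-i}C_{U,\rho}d_\rho(x,z_0)$ rather than $\alpha_i$ (or simply jump directly to the nested sequence $B_\rho(x,2^{-i}\rho_0)$, $i\ge 1$). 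Your $\alpha_i$'s cover only the complementary regime.

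Second, you correctly flag but do not close the terminal transition to satisfy (c): the last chain center sits at arc length $\approx\alpha_{i_x-1}\approx 8\tau C_{U,\rho}\,2^{-i_x}\rho_0$, while $B_\rho(x,2^{-i_x}\rho_0)$ has radius $2^{-i_x}\rho_0$, so these do not overlap once $\tau C_{U,\rho}$ is substantially bigger than $1$; calling the remedy ``only non-routine bookkeeping'' does not constitute a proof. The bridge does exist — a short run of roughly $4\tau C_{U,\rho}$ extra balls of radius $2^{-i_x}\rho_0$ along $\gamma$ between arc lengths $0$ and $\alpha_{i_x}$, admissible because $d_{X,\rho}(\gamma(s)) \ge d_{X,\rho}(x) - s$ near $x$ and $d_{X,\rho}(x)\ge 2^{2-i_x}\tau\rho_0$ for $i\ge i_x$ — but the indexing must be reconciled with the bound $m_i \le 2C_{U,\rho}r/\rho_0$ in (b), and you leave all of this unverified.
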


In the rest of this section we choose $\tau = \frac{5 \lambda a_2}{8 a_1 c_0}$, where $c_0$, $a_1$ and $a_2$ are as in Lemma~\ref{lem-13}. The reason for this choice can be seen in the proof of the next lemma.
In the rest of this section we call the constants $C_U$, $C_{\mu}$, $\s$, $p$, $\lambda$, $C_P$, $C_A$, $C_B$ and $C_C$ the structural data.

\begin{lem}\label{lem:PI-subWhitney}
Let $B_{i,j}$ be the chain of balls in Lemma \ref{lem:BomanChain}.
Let $u$ be a bounded measurable function and $g$ an upper gradient of $u$ with respect to the metric $d_\rho$.
Then for $i=0,1,\cdots$ and $j=0,\cdots, m_j$, we have that
\[
\vint_{B_{i,j}}|u(y)-u_{B_{i,j}^*}|\, d\mu_\rho(y)
\simle 2^{-i}\rho_0\, \left(\vint_{\frac{5\lambda a_2}{a_1}\, B_{i,j}}g(y)^p\, d\mu_\rho(y) \right)^{1/p},
\]
where $B_{i,j}^*$ is a lexicographic neighbour of $B_{i,j}$ and $a_1,a_2$ are as in Lemma~\ref{lem-13}.
The comparison constant depends only on the structural data. 
\end{lem}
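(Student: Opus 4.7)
The plan is to transfer the $p$-Poincaré inequality from $(X,d,\mu)$ to $(X,d_\rho,\mu_\rho)$ by exploiting the fact that the Whitney-type separation $d_{X,\rho}(x_{i,j})\ge 2^{2-i}\tau\rho_0$ forces $B_{i,j}$ and its enlargement $\alpha B_{i,j}$ (with $\alpha:=\tfrac{5\lambda a_2}{a_1}$) to lie far from $\partial_\rho X$, in the regime where, by Lemma~\ref{lem-13}, the metrics $d$ and $d_\rho$ are comparable up to the rescaling factor $\rho(|x_{i,j}|)$. First I would verify that the specific choice $\tau=\tfrac{\alpha}{8c_0}$ yields $\alpha\,2^{-i}\rho_0\le 2c_0\,d_\rho(x_{i,j},\infty)$, so Lemma~\ref{lem-13} applies to $\alpha B_{i,j}$ (and a fortiori to $B_{i,j}$). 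Setting $r_{i,j}:=2^{-i}\rho_0/\rho(|x_{i,j}|)$, this produces the sandwich inclusions
\begin{equation*}
B(x_{i,j},a_1 r_{i,j})\subset B_{i,j}\subset B(x_{i,j},a_2 r_{i,j}),\qquad
B(x_{i,j},5\lambda a_2 r_{i,j})\subset \alpha B_{i,j}\subset B(x_{i,j},a_2\alpha r_{i,j}),
\end{equation*}
together with the pointwise comparability $\rho(|y|)\simeq\rho(|x_{i,j}|)$ on $\alpha B_{i,j}$.

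Set $B:=B(x_{i,j},a_2 r_{i,j})$, so that $B_{i,j}\subset B$ and $\lambda B\subset \alpha B_{i,j}$. By Proposition~\ref{prop:ug-transform} the function $y\mapsto \rho(|y|)g(y)$ is an upper gradient of $u$ in $(X,d)$. Applying the $p$-Poincaré inequality for $(X,d,\mu)$ on $B$ and pulling $\rho(|y|)\simeq\rho(|x_{i,j}|)$ out of the right-hand side yields
\begin{equation*}
\vint_B|u-u_B|\,d\mu\le C_P a_2 r_{i,j}\left(\vint_{\lambda B}(\rho(|y|)g(y))^p\,d\mu\right)^{1/p}\simle 2^{-i}\rho_0\left(\vint_{\lambda B}g^p\,d\mu\right)^{1/p}.
\end{equation*}

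Next, since $\rho\simeq\rho(|x_{i,j}|)$ on $\alpha B_{i,j}$ we have $d\mu_\rho\simeq\rho(|x_{i,j}|)^\s\,d\mu$ on every subset of $\alpha B_{i,j}$, and doubling of $\mu$ applied to the $d$-balls sandwiching $B_{i,j}$, $B$, $\lambda B$, $B_{i,j}^*$ and $\alpha B_{i,j}$ shows that these five sets have pairwise comparable $\mu$-measures and hence pairwise comparable $\mu_\rho$-measures; consequently averages with respect to $\mu$ and $\mu_\rho$ over any of them are interchangeable up to constants, producing
\begin{equation*}
\vint_{B_{i,j}}|u-u_B|\,d\mu_\rho\simle\vint_B|u-u_B|\,d\mu\simle 2^{-i}\rho_0\left(\vint_{\alpha B_{i,j}}g^p\,d\mu_\rho\right)^{1/p}.
\end{equation*}
To replace $u_B$ by $u_{B_{i,j}^*}$, observe that by the chain construction $B_{i,j}^*$ has $d_\rho$-radius within a factor of $2$ of $2^{-i}\rho_0$ and meets $B_{i,j}$, so it sits inside $\alpha B_{i,j}$ with $\mu_\rho$-measure comparable to $\mu_\rho(B_{i,j})$; then
\begin{equation*}
|u_B-u_{B_{i,j}^*}|\le\vint_{B_{i,j}^*}|u-u_B|\,d\mu_\rho\simle\vint_B|u-u_B|\,d\mu_\rho,
\end{equation*}
and the triangle inequality $\vint_{B_{i,j}}|u-u_{B_{i,j}^*}|\,d\mu_\rho\le\vint_{B_{i,j}}|u-u_B|\,d\mu_\rho+|u_B-u_{B_{i,j}^*}|$ absorbs the correction into the same upper bound, finishing the proof. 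The main obstacle is not conceptual but the bookkeeping in the first step: one must check that the precise constant $\tfrac{5\lambda a_2}{a_1}$ is the smallest enlargement that simultaneously engulfs $\lambda B$ (so the Poincaré inequality on $(X,d,\mu)$ can be invoked), still lies in the range of validity of Lemma~\ref{lem-13}, and absorbs the lexicographic neighbor $B_{i,j}^*$ used in the averaging step.
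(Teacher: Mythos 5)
Your overall strategy is the same as the paper's (use the choice of $\tau$ so that Lemma~\ref{lem-13} applies at scale $\tfrac{5\lambda a_2}{a_1}2^{-i}\rho_0$, pass to comparable $d$-balls, transform the upper gradient via Proposition~\ref{prop:ug-transform}, apply the $p$-Poincar\'e inequality of $(X,d,\mu)$, and convert measures using $\rho(|y|)\simeq\rho(|x_{i,j}|)$), and those parts are fine. But there is a genuine gap in how you absorb the neighbor ball. You apply the Poincar\'e inequality on $B:=B(x_{i,j},a_2r_{i,j})$ with dilate $\lambda B$, and then claim
\[
|u_B-u_{B_{i,j}^*}|\le \vint_{B_{i,j}^*}|u-u_B|\,d\mu_\rho \simle \vint_{B}|u-u_B|\,d\mu_\rho .
\]
The last inequality is not justified and is false in general as a comparison of averages: $B_{i,j}^*$ is not contained in $B$. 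The chain construction only gives $B_{i,j}^*\subset 5B_{i,j}$ (its radius can be $2^{1-i}\rho_0$ and it merely intersects $B_{i,j}$), and Lemma~\ref{lem-13} then only yields $B_{i,j}^*\subset B(x_{i,j},5a_2r_{i,j})=5B$; nothing prevents $u$ from being large on $B_{i,j}^*\setminus B$ while $\vint_B|u-u_B|\,d\mu_\rho$ is small, since your claimed inequality makes no reference to $g$. So the Poincar\'e inequality has been invoked on too small a ball to control the neighbor term. Relatedly, your closing remark misattributes the factor $5$: engulfing $\lambda B$ alone would only require dilation $\tfrac{\lambda a_2}{a_1}$; the $5$ is there precisely to accommodate $B_{i,j}^*\subset 5B_{i,j}$.

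The repair is exactly the paper's route, and it fits within the stated constant: instead of centering the argument on $B$, bound $\vint_{B_{i,j}}|u-u_{B_{i,j}^*}|\,d\mu_\rho$ by a double average over $5B_{i,j}$ (using $B_{i,j}^*\subset 5B_{i,j}$ and doubling of $\mu_\rho$), replace that by $\vint_{\frac{5a_2}{a_1}\widehat{B}}|u-c|\,d\mu$ where $\widehat{B}:=B(x_{i,j},a_1r_{i,j})$ and $c$ is the $\mu$-mean of $u$ over $\tfrac{5a_2}{a_1}\widehat{B}\supset 5B_{i,j}$, and then apply the $p$-Poincar\'e inequality of $(X,d,\mu)$ on $\tfrac{5a_2}{a_1}\widehat{B}$; its $\lambda$-dilate $B(x_{i,j},5\lambda a_2 r_{i,j})$ lies inside $\tfrac{5\lambda a_2}{a_1}B_{i,j}$ by the first inclusion of Lemma~\ref{lem-13}, which is exactly why the lemma's right-hand side carries the dilation $\tfrac{5\lambda a_2}{a_1}$. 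With that modification your argument coincides with the paper's proof.
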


\begin{proof}
With our choice of $\tau$ we get from Lemma \ref{lem:BomanChain}(a) that $0 < \frac{5 \lambda a_2}{a_1} 2^{-i} \rho_0 \leq 2 c_0 d_{\rho}(x_{i,j},\infty)$.
Therefore we have from Lemma \ref{lem-13} that $\rho(|y|) \simeq \rho(|x_{i,j}|)$ for every $y \in \frac{5 \lambda a_2}{a_1} B_{i,j}$.
By setting $\widehat{B_{i,j}}:=B(x_{i,j},2^{-i}\rho_0a_1/\rho(|x_{i,j}|))$ we also get that
\begin{align*}
5\widehat{B_{i,j}}
&\subset 5B_{i,j}
\subset \frac{5a_2}{a_1}\widehat{B_{i,j}} \quad \text{ and }
\\
\frac{5\lambda a_2}{a_1} \widehat{B_{i,j}}
&\subset \frac{5 \lambda a_2}{a_1} B_{i,j}
\subset \frac{5\lambda a_2^2}{a_1^2} \widehat{B_{i,j}}.
\end{align*}
These properties are used repeatedly below.

Note that $B_{i,j}^* \subset 5B_{i,j}$. It follows from the doubling property of $\mu_\rho$ that
\begin{align*}
\vint_{B_{i,j}}|u(y)-u_{B_{i,j}^*}|\, d\mu_\rho(y)
&\lesssim \vint_{5B_{i,j}}\vint_{5B_{i,j}}|u(y)-u(w)|\, d\mu_\rho(y)\, d\mu_\rho(w)
\\
&\le 2\, \vint_{5B_{i,j}}|u(y)-c_{i,j}|\, d\mu_\rho(y),
\end{align*}
where
\[
c_{i,j}=
\frac{1}{\mu(\frac{5a_2}{a_1} \widehat{B_{i,j}})}\, \int_{\frac{5a_2}{a_1} \widehat{B_{i,j}}}u(y)d\mu(y).
\]
As noted in the beginning of this proof
\[
\mu_\rho(5B_{i,j})\simeq \rho(|x_{i,j}|)^{\s} \mu(5B_{i,j}),
\]
and so 
\[
\vint_{5B_{i,j}}|u(y)-c_{i,j}|\, d\mu_\rho(y) \simeq \vint_{5B_{i,j}}|u(y)-c_{i,j}|\, d\mu(y) \lesssim \vint_{\tfrac{5a_2}{a_1}\widehat{B_{i,j}}}|u(y)-c_{i,j}|\, d\mu(y).
\]
We now apply the $p$-Poincar\'e inequality on the ball $\tfrac{5a_2}{a_1}\widehat{B_{i,j}}$ to see that
\[
\vint_{\tfrac{5a_2}{a_1}\widehat{B_{i,j}}}|u(y)-c_{i,j}|\, d\mu(y)
\leq C_P \frac{5 a_2 2^{-i} \rho_0}{\rho(|x_{i,j}|)} \left(\vint_{\tfrac{5\lambda a_2}{a_1}\widehat{B_{i,j}}} (\rho(|y|) g(y))^p\, d\mu(y) \right)^{1/p}.
\]
We used Proposition~\ref{prop:ug-transform}(b) here.
Therefore
\begin{align*}
\vint_{B_{i,j}}|u(y)-u_{B_{i,j}^*}|\, &d\mu_\rho(y)
\simle \frac{2^{-i} \rho_0}{\rho(|x_{i,j}|)} \left( \frac{1}{\mu(\widehat{B_{i,j}})} \int_{\tfrac{5\lambda a_2}{a_1} B_{i,j}} (\rho(|y|)g(y))^p\, d\mu(y) \right)^{1/p}
\\
&\simeq 2^{-i} \rho_0 \left( \frac{1}{\rho(|x_{i,j}|)^{\s} \mu(\widehat{B_{i,j}})} \int_{\tfrac{5\lambda a_2}{a_1} B_{i,j}} g(y)^p\, \rho(|y|)^{\s} d\mu(y) \right)^{1/p}
\\
&\simeq 2^{-i} \rho_0 \left( \vint_{\tfrac{5\lambda a_2}{a_1} B_{i,j}} g(y)^p\, d\mu_{\rho}(y) \right)^{1/p}.
\qedhere
\end{align*}
\end{proof}

Now we prove the main result of this section, which also verifies Theorem \ref{maintheorem}(c).

\begin{thm}
Let $(X,d,\mu)$ be a $C_U$-uniform space with a doubling measure $\mu$ such that the space supports a $p$-Poincaré inequality.
Suppose that $\rho$ is lower semicontinuous and satisfies Conditions \ref{condA}, \ref{condB} and \ref{condC}.
Then the space $(X,d_\rho,\mu_\rho)$ supports a $p$-Poincar\'e inequality with comparison constant that depends on the structural data, and dilation constant that depends only on $C_U$, $C_A$ and $C_B$.
\end{thm}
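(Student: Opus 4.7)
The plan is to run the standard Boman--chain telescoping argument inside the sphericalized space. The three ingredients already established make this feasible: $(X,d_\rho)$ is $C_{U,\rho}$-uniform (Theorem~\ref{thm-uniformity}), $\mu_\rho$ is doubling on $(X,d_\rho)$ (Theorem~\ref{thm-doubling}), and the sub-Whitney Poincar\'e estimate of Lemma~\ref{lem:PI-subWhitney} controls averaged oscillations on each chain ball by the original $p$-Poincar\'e inequality on $(X,d,\mu)$. By the usual truncation it suffices to prove the inequality for bounded measurable $u$.

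First I would fix a ball $B=B_\rho(z,r)$ in $(X,d_\rho)$. Since $(X,d_\rho)$ is bounded by Theorem~\ref{thm-9}, we may assume $r \le 2\diam_\rho(X)$. Lemma~\ref{lem:GoIn} supplies $z_0 \in B$ with $d_{X,\rho}(z_0) \ge r/(16 C_{U,\rho})$. Setting $\rho_0 := r/(16\tau C_{U,\rho})$ with $\tau = 5\lambda a_2/(8 a_1 c_0)$ as fixed in the section, Lemma~\ref{lem:BomanChain} produces, for each $x \in B_\rho(z,2r)$, a Boman chain $\{B_{i,j}\}$ of balls connecting $B_{0,0}=B_\rho(z_0,\rho_0)$ to $x$, with geometrically decaying radii $2^{-i}\rho_0$, cardinality $m_i \le 2C_{U,\rho}\,r/\rho_0$ per level, and consecutive balls overlapping in the lexicographic ordering.

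For $\mu_\rho$-a.e.\ $x \in B$, Lebesgue differentiation (valid since $\mu_\rho$ is doubling) yields $u(x)=\lim_i u_{B_{i,0}}$ along the chain, and telescoping gives
\[
|u(x) - u_{B_{0,0}}| \le \sum_{(i,j)} \bigl|u_{B_{i,j}} - u_{B_{i,j}^*}\bigr|,
\]
where $B_{i,j}^*$ is the lexicographic predecessor. Since $B_{i,j}^* \subset 5 B_{i,j}$ and $\mu_\rho$ is doubling, each difference is bounded by $\vint_{B_{i,j}}|u - u_{B_{i,j}^*}|\, d\mu_\rho$, which by Lemma~\ref{lem:PI-subWhitney} is at most $C\, 2^{-i}\rho_0 \bigl(\vint_{(5\lambda a_2/a_1) B_{i,j}} g^p\, d\mu_\rho\bigr)^{1/p}$. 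I would then integrate against $d\mu_\rho$ over $B$, swap sum and integral, and apply the bounded overlap of the dilated chain balls together with Minkowski's inequality in $\ell^p$ to obtain
\[
\vint_B |u - u_{B_{0,0}}|\, d\mu_\rho \simle r\, \left(\vint_{\Lambda B} g^p\, d\mu_\rho\right)^{1/p},
\]
where $\Lambda$ absorbs the factor $5\lambda a_2/a_1$ from Lemma~\ref{lem:PI-subWhitney} and the factor $2$ needed to enlarge $B$ to cover all chain balls. A final triangle inequality replaces $u_{B_{0,0}}$ by $u_B$ at the cost of changing $C_P$, completing the $p$-Poincar\'e inequality.

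The main technical hurdle I expect is verifying the bounded overlap of the dilated balls $\{(5\lambda a_2/a_1) B_{i,j}\}$ in the $d_\rho$-metric with a constant depending only on the structural data. This follows from parts (a)--(b) of Lemma~\ref{lem:BomanChain}: the geometric decay of radii combined with the $i$-uniform cardinality bound $m_i \simle r/\rho_0$ forces the overlap series to be summable, while the doubling of $\mu_\rho$ (Theorem~\ref{thm-doubling}) converts this into a uniform pointwise overlap count. Once this is carefully written out with constants independent of the base ball, the rest of the argument is routine and mirrors the standard chain-argument proof of Poincar\'e inequalities on uniform domains carried out in~\cite{BSh-Uniform}.
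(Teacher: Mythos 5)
Your setup matches the paper's exactly: reduce to bounded $u$, use Lemma~\ref{lem:GoIn} to pick $z_0$, build the Boman chain of Lemma~\ref{lem:BomanChain} with $\rho_0\simeq r$, telescope along the chain at $\mu_\rho$-Lebesgue points, and control each link by Lemma~\ref{lem:PI-subWhitney}. The gap is in your final step, where you claim to pass from the pointwise bound $|u(x)-u_{B_{0,0}}|\simle \rho_0\sum_{i,j}2^{-i}\bigl(\vint_{\frac{5\lambda a_2}{a_1}B_{i,j}}g^p\,d\mu_\rho\bigr)^{1/p}$ to the Poincar\'e inequality by ``swapping sum and integral, bounded overlap of the dilated chain balls, and Minkowski in $\ell^p$''. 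This does not work as stated: for $p>1$ the inequality $\sum_k c_k^{1/p}\le\bigl(\sum_k c_k\bigr)^{1/p}$ is false (it goes the other way), so no overlap count can convert a sum of $L^p$-averages over the chain balls into a single $L^p$-average over a dilate of $B_\rho(z,r)$ by Minkowski alone. Moreover the chain depends on the point $x$, so the overlap you would need to control is across the chains of \emph{different} points, not within one chain; that counting is exactly the nontrivial part and is not supplied by Lemma~\ref{lem:BomanChain}(a)--(b). The standard shortcut (bound the chain sum by a restricted maximal function of $g^p$ and use a weak-type/Kolmogorov estimate) also fails at $p=1$, which is included in the theorem.

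What the paper does instead, following \cite{BSh-Uniform}, is a weak-type level-set argument valid for all $1\le p<\infty$: for $t>0$ consider $E_t=\{x\in B_\rho(z,r): |u(x)-u_{B_{0,0}}|>t\}$; insert weights $2^{-i\eps}$ to pigeonhole, for each $x\in E_t$, a single chain ball $B_x=B_{i_x,j_x}$ on which $2^{i_x(1-\eps)}\simle \frac{r}{t}\bigl(\vint_{\frac{5\lambda a_2}{a_1}B_x}g^p\,d\mu_\rho\bigr)^{1/p}$; use the doubling exponent $Q$ of $\mu_\rho$ to turn this into a bound on $\mu_\rho(B_x)^\alpha$ with $\alpha=1-(1-\eps)p/Q>0$; cover $E_t$ by the balls $\tilde C B_x$, extract a pairwise disjoint subfamily by the $5$-covering lemma, and sum using disjointness (the choice $\frac{5\lambda a_2}{\tilde C a_1}\le 1$ is what makes the gradient integrals disjointly supported in $B_\rho(z,C_*r)$); finally integrate $\mu_\rho(E_t)$ in $t$ by Cavalieri, splitting at an optimized level $H$. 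This weak-to-strong mechanism is precisely the missing ingredient in your outline; without it, or an equivalent substitute (e.g.\ a Maz\cprime ya-type truncation argument that also covers $p=1$), the proof is incomplete.
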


\begin{proof}
Let $z \in X$ and $r>0$. If $r>2\diam_\rho(X)$, then the ball $B_\rho(z,r)=X=B_\rho(z,2\diam_\rho(X))$ and thus we can assume that $0<r\le 2\diam_\rho(X)$. By the discussion in the definition of the Poincar\'e inequality, it suffices to prove the inequality for bounded functions. Thus let $u$ be a bounded measurable function and let $g$ be an upper gradient of $u$ with respect to the metric $d_\rho$.

By Lemma~\ref{lem:GoIn} we can choose $z_0\in B_\rho(z,r)$ such that $d_{X,\rho}(z_0)\ge \frac{1}{16C_{U,\rho}}r$. Let $\rho_0=\frac{1}{16C_{U,\rho}}\frac{1}{16\tau C_{U,\rho}}r$, where $\tau=\frac{5\lambda a_2}{8a_1c_0}$ as discussed above. Then $\rho_0$ is suitable for Lemma~\ref{lem:BomanChain} and as there set $B_{0,0}:=B_\rho(z_0,\rho_0)$. Let $\{B_{i,j}\}_{(i,j)}$ be the chain of balls promised by the lemma connecting $x\in B_{\rho}(z_0,2r)$ with $B_{0,0}$.

We only focus on points $x \in B_\rho(z,r)\subset B_\rho(z_0,2r)$ that are $\mu_\rho$--Lebesgue points of $u$. Then, denoting by $B^*_{i,j}$ the ball that is next to the ball $B_{i,j}$ in the lexicographic ordering referred to in Lemma~\ref{lem:BomanChain}, by Lemma~\ref{lem:PI-subWhitney}, we have
\begin{align*}
|u(x)-u_{B_{0,0}}| &\le \sum_{i,j}|u_{B_{i,j}}-u_{B_{i,j}^*}|\\
  &\leq C \rho_0\, \sum_{i=0}^\infty 2^{-i}\, \sum_{j=0}^{m_i} \left(\vint_{\frac{5\lambda a_2}{a_1} B_{i,j}}g(y)^p\, d\mu_\rho(y) \right)^{1/p}.
\end{align*}
Here the constant $C$ depends on the structural data.
We use Cavalieri's principle to estimate $\int_{B_{\rho}(z,r)}|u(x)-u_{B_{0,0}}|\, d\mu_\rho(x)$.
To do so, we fix a choice of $\eps\in(0,1)$, and for $t>0$ we consider
\[
E_t:=\{x\in B_\rho(z,r) : |u(x)-u_{B_{0,0}}|>t\text{ and }x\text{ is a Lebesgue point of }u\}.
\]
For $x\in E_t$, from the above inequality we see that
\[
t =\sum_{i=0}^\infty M c_\eps\, 2^{-i\eps}\, M^{-1} t
< \sum_{i=0}^\infty C\, \rho_0\, 2^{-i}\, \sum_{j=0}^{m_i} \left(\vint_{\frac{5\lambda a_2}{a_1} B_{i,j}}g(y)^p\, d\mu_\rho(y) \right)^{1/p}.
\]
In the above, $c_\eps=(\sum_{i=0}^\infty 2^{-i \eps})^{-1}$ and $M=1+2C_{U,\rho}r/\rho_0$ is an upper bound for $m_i+1$.
It follows that there is some ball $B_x=B_{i_x,j_x}$ in the chain $\{B_{i,j}\}_{i,j}$ such that
\[
M^{-1}\, t< C c_{\eps}^{-1} 2^{-i_x(1-\eps)}\rho_0\, \left(\vint_{\frac{5\lambda a_2}{a_1} B_x}g(y)^p\, d\mu_\rho(y) \right)^{1/p},
\]
and so, using the fact that $M\rho_0 \le 3 C_{U,\rho}r$, we get
\begin{equation}\label{eq:est1}
2^{i_x(1-\eps)}
\leq 3C\, C_{U,\rho} c_{\eps}^{-1} \frac{r}{t} 
\, \left(\vint_{\frac{5\lambda a_2}{a_1}\, B_{x}}g(y)^p\, d\mu_\rho(y) \right)^\frac1p.
\end{equation}

Since $\mu_\rho$ is doubling by Theorem \ref{thm-doubling}, there is a constant $Q>0$, depending only on the doubling constant $C_{\mu_{\rho}}$, such that
\[
\left(\frac{2^{-i}\rho_0}{\rho_0}\right)^Q\lesssim \frac{\mu_\rho(B_{i,j})}{\mu_\rho(B_{0,0})},
\]
where the comparison constant depends on $C_{U,\rho},\,\tau$ and $C_{\mu_{\rho}}$, see for example \cite[Lemma 3.3]{BBbook}.
Now setting $\alpha:=1-(1-\eps)p/Q$ and choosing $\eps\in(0,1)$ close to one so that $\alpha>0$, it follows from~\eqref{eq:est1} that
\begin{equation}\label{eq:est2}
\mu_\rho(B_{x})^\alpha\lesssim \frac{1}{\mu_{\rho}(B_{0,0})^{1-\alpha}}\, \frac{r^p}{t^p}\, 
  \int_{\frac{5\lambda a_2}{a_1}\, B_x}g(y)^p\, d\mu_\rho(y).
\end{equation}

By Lemma \ref{lem:BomanChain}(a) we have $d_{\rho}(x,x_{i_x,j_x})< C_{U,\rho} 2^{1-i_x}r =2\cdot16^2C_{U,\rho}^3\tau2^{-i_x}\rho_0$. Thus $x\in \tilde C B_x$, where $\tilde C=2\cdot16^2C_{U,\rho}^3\tau$.  
Therefore the collection $\{\tilde CB_x\, :\, x\in E_t\}$ is a cover of $E_t$; thus by the 5-covering lemma we have a countable pairwise disjoint subcollection $\mathcal{F}\subset \{\tilde CB_x\, :\, x\in E_t\}$ such that
\[
E_t\subset\bigcup_{B\in\mathcal{F}}5B.
\]
It follows from~\eqref{eq:est2} above and the doubling property of $\mu_\rho$, that
\begin{align*}
\mu_\rho(E_t)
&\le \sum_{B\in\mathcal{F}}\mu_\rho(5B)
\lesssim \sum_{B\in\mathcal{F}}\mu_\rho(\tfrac{1}{\tilde C}B)\\
&\lesssim \left(\frac{r}{t}\right)^{p/\alpha}\, \frac{1}{\mu_\rho(B_{0,0})^{(1-\alpha)/\alpha}}\,  \sum_{B\in\mathcal{F}}\,  
    \left(\int_{\tfrac{5\lambda a_2}{ \tilde Ca_1}\, B} g(x)^p\, d\mu_\rho(x) \right)^{1/\alpha}.
\end{align*}
As $0<\alpha<1$, it follows that
\begin{equation}
\label{eq:est3}
\mu_\rho(E_t)\lesssim \left(\frac{r}{t}\right)^{p/\alpha}\, \frac{1}{\mu_\rho(B_{0,0})^{(1-\alpha)/\alpha}}\, 
 \left(\int_{B_\rho(z,C_*r)} g(x)^p\, d\mu_\rho(x) \right)^{1/\alpha}.
\end{equation}
Here we used the fact that $\frac{5 \lambda a_2}{\tilde{C} a_1} \leq 1$  making the balls $\frac{5\lambda a_2}{\tilde{C}a_1}B$ disjoint and $C_*=\frac{8c_0}{(16C_{U,\rho})^2}+2C_{U,\rho}+1$ is obtained from the requirement that $\frac{5\lambda a_2}{\tilde{C}a_1}B\subset B_\rho(z,C_*r)$.
Notice that $C_*$  depends only on $C_U$, $C_A$ and $C_B$.

Recall from the Cavalieri principle~\cite[Proposition 6.24]{Fol} that
\[
\int_{B_{\rho}(z,r)}|u(x)-u_{B_{0,0}}|\, d\mu_\rho(x)=\int_0^\infty\mu_\rho(E_t)\, dt.
\]
Let $H>0$. For $0<t<H$ we estimate $\mu_\rho(E_t) \le \mu_\rho(B_{\rho}(z,r))$ and for $t\ge H$ we use the estimate~\eqref{eq:est3}.
This yields
\begin{align*}
\int_{B_{\rho}(z,r)}|&u(x)-u_{B_{0,0}}|\, d\mu_\rho(x)
\lesssim \mu_\rho(B_{\rho}(z,r))\, H\,
\\
&+\, \frac{r^{p/\alpha}}{\mu_\rho(B_{0,0})^{(1-\alpha)/\alpha}}\, \left(\int_{B_\rho(z,C_* r)}g(x)^p\, d\mu_\rho(x) \right)^{1/\alpha}\, \int_H^\infty t^{-p/\alpha}\, dt
\\
&= \mu_\rho(B_{\rho}(z,r))\, H\,
\\
&+\,\left[\frac{r^{p/\alpha}}{\mu_\rho(B_{0,0})^{(1-\alpha)/\alpha}}\, \left(\int_{B_\rho(z,C_* r)}g(x)^p\, d\mu_\rho(x) \right)^{1/\alpha} \frac{\alpha}{p-\alpha} \right]\, H^{1-p/\alpha}.
\end{align*}
With $A:=\mu_\rho(B_\rho(z,r))$ and
\begin{equation*}
E:=\left[\frac{r^{p/\alpha}}{\mu_\rho(B_{0,0})^{(1-\alpha)/\alpha}}\, \left(\int_{B_\rho(z,C_* r)}g(x)^p\, d\mu_\rho(x) \right)^{1/\alpha} \frac{\alpha}{p-\alpha} \right],
\end{equation*}
we see that the function $\psi(t)=A\, t+ E\, t^{1-p/\alpha}$ achieves its minimum at the value of $t$ for which $t^{p/\alpha}= \frac{E}{A} ( \frac{p}{\alpha} -1)$. Choosing $H$ to be this value of $t$, that is,
\begin{equation*}
H:=\frac{r}{\mu_\rho(B_{0,0})^{(1-\alpha)/p}}\, \frac{1}{\mu_\rho(B_\rho(z,r))^{\alpha/p}} \left(\int_{B_\rho(z,C_*r)}g(x)^p\, d\mu_\rho(x) \right)^{1/p},
\end{equation*}
we see that 
\[
\int_{B_\rho(z,r)}|u(x)-u_{B_{0,0}}|\, d\mu_\rho(x)
  \lesssim r\, 
  \frac{\mu_\rho(B_\rho(z,r))^{1-\alpha/p}}{\mu_\rho(B_{0,0})^{(1-\alpha)/p}}\, \left(\int_{B_\rho(z,C_*r)}g(x)^p\, d\mu_\rho(x) \right)^{1/p}.
\]
From the doubling property of $\mu_\rho$ it follows that $\mu_\rho(B_\rho(z,C_*r))\simeq \mu_\rho(B_\rho(z,r))$, and hence
\begin{align*}
\vint_{B_\rho(z,r)}|u(x)-u_{B_{0,0}}|\, d\mu_\rho(x)
&\lesssim r\, \frac{\mu_\rho(B_\rho(z,r))^{-\alpha/p}}{\mu_\rho(B_{0,0})^{(1-\alpha)/p}}\, \left(\int_{B_\rho(z,C_*r)}g(x)^p\, d\mu_\rho(x) \right)^{1/p}\\
\lesssim r\, &\frac{\mu_\rho(B_\rho(z,r))^{(1-\alpha)/p}}{\mu_\rho(B_{0,0})^{(1-\alpha)/p}}\, \left(\vint_{B_\rho(z,C_*r)}g(x)^p\, d\mu_\rho(x) \right)^{1/p}\\
&\lesssim r\, \left(\vint_{B_\rho(z,C_*r)}g(x)^p\, d\mu_\rho(x) \right)^{1/p},
\end{align*}
where we have used the fact that $\mu_\rho(B_\rho(z,r))/\mu_\rho(B_{0,0})\lesssim 1$.
This concludes the proof of the $p$-Poincar\'e inequality. 
\end{proof}

\noindent {\bf Addresses:} \\

	\vskip .2cm
	
	\noindent R.K.: Department of Mathematics and Systems Analysis, Aalto University, P.O. Box 11100, FI-00076 Aalto, Finland.
	\\
	\noindent E-mail:  R.K.: {\tt riikka.korte@aalto.fi}\\
	
	\vskip .2cm
	
	\noindent S.R.: Department of Mathematics and Systems Analysis, Aalto University, P.O. Box 11100, FI-00076 Aalto, Finland.
	\\
	\noindent E-mail:  S.R.: {\tt sari.rogovin@aalto.fi}\\
	
	\vskip .2cm
	
	\noindent N.S.: Department of Mathematical Sciences, P.O.~Box 210025, University of Cincinnati, Cincinnati, OH~45221-0025, U.S.A.\\
	\noindent E-mail:  N.S.: {\tt shanmun@uc.edu}\\

\noindent T.T.: Department of Mathematics and Systems Analysis, Aalto University, P.O. Box 11100, FI-00076 Aalto, Finland.
	\\
	\noindent E-mail:  T.T.: {\tt timo.i.takala@aalto.fi}\\
	
	\vskip .2cm

\end{document}